\setlist[enumerate]{label=$(\mathrm{\arabic*})$, leftmargin=*}
\setlist[itemize]{leftmargin=*}
\newtheorem{thm}{Theorem}[section]
\newaliascnt{theo}{thm}
\newtheorem{theo}[theo]{Theorem}
\newaliascnt{cor}{thm}
\newtheorem{cor}[cor]{Corollary}
\newaliascnt{prop}{thm}
\newtheorem{prop}[prop]{Proposition}
\newaliascnt{lem}{thm}
\newtheorem{lem}[lem]{Lemma}
\newaliascnt{conj}{thm}
\newtheorem{conj}[conj]{Conjecture}
\newaliascnt{que}{thm}
\newtheorem{que}[que]{Question}
\newaliascnt{ass}{thm}
\newaliascnt{defnot}{thm}
\newtheorem{defnot}[defnot]{Definition-Notation}
\theoremstyle{remark}
\newaliascnt{rem}{thm}
\newtheorem{rem}[rem]{Remark}
\theoremstyle{definition}
\newtheorem{defn}[thm]{Definition}
\newtheorem{exmp}[thm]{Example}
\newtheorem{notn}[thm]{Notation}
\newtheorem{conv}[thm]{Convention}
\newcommand{\Z}{\mathbb{Z}\xspace}
\newcommand{\C}{\mathbb{C}\xspace}
\newcommand{\F}{\mathbb{F}\xspace}
\newcommand{\Q}{\mathbb{Q}\xspace}
\newcommand{\G}{\mathbb{G}\xspace}
\DeclareMathOperator{\Spec}{Spec}
\DeclareMathOperator{\res}{res}
\DeclareMathOperator{\alb}{alb}
\DeclareMathOperator{\img}{Im}
\DeclareMathOperator{\Hom}{Hom}
\DeclareMathOperator{\Pic}{Pic}
\DeclareMathOperator{\Gal}{Gal}
\DeclareMathOperator{\Fil}{Fil}
\DeclareMathOperator{\Alb}{Alb}
\DeclareMathOperator{\Symb}{Symb}
\DeclareMathOperator{\Br}{Br}
\DeclareMathOperator{\inv}{inv}
\let\c@equation\c@thm
\numberwithin{equation}{section}
\title{Divisibility Results for zero-cycles}
\author[*]{Evangelia Gazaki*} \address[*]{\normalfont Department of Mathematics, University of Virginia, 221 Kerchof Hall, 141 Cabell Dr., Charlottesville, VA, 22904, USA. Email: \texttt{eg4va@virginia.edu}}
\author[**]{Toshiro Hiranouchi**}\address[**]{\normalfont Department of Basic Sciences, Graduate School of Engineering, 
Kyushu Institute of Technology, 
1-1 Sensui-cho, Tobata-ku, Kitakyushu-shi, 
Fukuoka 804-8550 JAPAN. Email: \texttt{hira@mns.kyutech.ac.jp}}
\begin{document}

\begin{abstract}
Let $X$ be a product of smooth projective curves over a finite unramified extension $k$ of $\Q_p$. Suppose that the Albanese variety of $X$ has good reduction and that $X$ has a $k$-rational point. We propose the following conjecture. The kernel of the Albanese map $CH_0(X)^0\rightarrow\Alb_X(k)$ is $p$-divisible. When $p$ is an odd prime, we prove this conjecture for a large family of products of elliptic curves and certain principal homogeneous spaces of abelian varieties. Using this, we provide some evidence for a local-to-global conjecture for zero-cycles of Colliot-Th\'{e}l\`{e}ne  and Sansuc (\cite{Colliot-Thelene/Sansuc1981}), and Kato and Saito (\cite{Kato/Saito1986}). 

\end{abstract}

\maketitle

\section{Introduction} Let $X$ be a smooth, projective, and geometrically connected variety over a field $k$. We consider the group $CH_0(X)$ of zero cycles on $X$ modulo rational equivalence. This group is a direct generalization of the Picard group $\Pic(C)$ of a  curve $C$, and as such it inherits many of its properties. Namely, there is a degree map, $CH_0(X)\xrightarrow{\deg}\Z,$ whose kernel will be denoted by $F^1(X)$.
  Moreover, there is a generalization of the Abel-Jacobi map, 
\[F^1(X)\xrightarrow{\alb_X}\Alb_X(k),\] called the \textit{Albanese map} of $X$, where $\Alb_X$ is the dual abelian variety to the Picard variety of $X$. When $X$ has a $k$-rational point, the degree  map is surjective. 
When $k$ is algebraically closed it follows by Roitman's theorem that the Albanese map is also surjective (\cite{Roitman1980}), but this is not always true over arbitrary fields, except in some special cases. Some examples when surjectivity holds include $K3$ surfaces and geometrically rationally connected varieties (in these cases surjectivity holds trivially since $\Alb_X=0$), and products of curves all having a $k$-rational point (\cite{Kahn92b, Raskind/Spiess2000}). 
 Coming to the question of injectivity, unlike the case of curves when the map $\alb_X$ is always injective, in higher dimensions the situation is rather chaotic and the  map $\alb_X$ has often a very significant kernel, which we will denote by $F^2(X)$. Mumford (\cite{Mumford1968}) was the first to find examples of surfaces over $\C$ with enormous $F^2$, in particular not finitely generated. The key feature of these examples was the positive geometric genus, $p_g(X)>0$. 

When $k$ is a finite extension over its prime field, the expectations for the structure of $F^2(X)$ are on the other extreme, predicting that $F^2(X)$ is rather small. When $k$ is a finite field, $F^2(X)$ is indeed finite and its structure can be understood by geometric class field theory (\cite{Kato/Saito1983}, \cite{Kerz/Saito2016}). 
When $k$ is a number field, that is, a finite extension of $\Q$, we have fascinating conjectures due to Beilinson and Bloch. Namely, Beilinson predicts (\cite{Beilinson1984}) that  $F^2(X\otimes_k\overline{\Q})=0$, which would imply that $F^2(X)$ is a torsion group,  while Bloch (\cite{Bloch1984}) expects that the group $CH_0(X)$ is a finitely generated abelian group. The two conjectures combined suggest that $F^2(X)$ should be  finite.
Apart from curves for which the above conjectures follow by the Mordell-Weil theorem, there is some evidence for surfaces with $p_g(X)=0$ by the work of Colliot-Th\'{e}l\`{e}ne and Raskind (\cite{Colliot-TheleneRaskind1991}) and Salberger (\cite{Salberger}), and for the self-product $E\times E$ of an elliptic curve $E$ over $\Q$ by the work of Langer and Saito (\cite{LangerSaito}), and Langer and Raskind (\cite{LangerRaskind}). Namely, for all these classes of surfaces it has been shown that the torsion subgroup of $F^2(X)$ is finite. 


The intermediate case of a $p$-adic field $k$ is rather interesting, as it features similarities with both $\C$ and $\Q$. In this case the group $F^2(X)$ is conjectured by Raskind and Spiess to have the following structure. 

\begin{conj} [{Raskind, Spiess, \cite[Conjecture~3.5.4]{Raskind/Spiess2000}}]\label{local}  Let $X$ be a smooth projective variety over a finite extension of the $p$-adic field  $\Q_p$. The Albanese kernel $F^2(X)$ is the direct sum of a divisible group and a finite group. 
\end{conj}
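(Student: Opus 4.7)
My plan is to argue prime by prime and then reassemble. A standard structural fact in abelian group theory says that an abelian group $A$ is of the form $D \oplus F$ with $D$ divisible and $F$ finite exactly when its maximal divisible subgroup has finite index, and a sufficient criterion for this is that $A[\ell^\infty]$ is finite for every prime $\ell$, that $A/\ell A$ is finite for every $\ell$, and that $A$ is $\ell$-divisible for almost all $\ell$. So I would split $F^2(X)$ into its prime-to-$p$ and $p$-primary contributions and verify these properties for each separately.

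For the prime-to-$p$ contribution, the natural tool is the cycle class map into \'etale cohomology. The Merkurjev-Suslin/Bloch-Kato norm residue isomorphism gives $CH_0(X)/\ell^n \xrightarrow{\sim} H^{2d}_{\et}(X,\mu_{\ell^n}^{\otimes d})$ for $d = \dim X$ and $\ell \neq p$. Combined with Poincar\'e duality, the Hochschild-Serre spectral sequence for $\Gal(\overline{k}/k)$, and Tate's finiteness theorems for Galois cohomology of $p$-adic fields, this should imply that the prime-to-$p$ part of $F^2(X)$ has finite torsion and is $\ell$-divisible for all but finitely many $\ell$. Much of this has already been carried out in the work of Colliot-Th\'{e}l\`{e}ne-Raskind, Salberger, and Sato, and I would simply invoke or lightly adapt their results.

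For the $p$-primary contribution, I would try to mimic the strategy the paper develops for products of curves. Using a decomposition of $CH_0$ via Somekawa-type K-groups $K(k;A_1,\ldots,A_n)$, where the $A_i$ are the factors of the Albanese variety, the good reduction hypothesis should permit a reduction to residue field computations; here Mattuck's theorem $A(k)\cong\Z_p^{r}\oplus(\text{finite})$ for an abelian variety $A$ with good reduction is the key input providing divisibility. If this program succeeds, it establishes $p$-divisibility of $F^2(X)$ modulo a finite group and therefore completes the structural decomposition.

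The hard part will be the $p$-primary case. Even in the product-of-curves setting, the paper requires $p$ odd, $k/\Q_p$ unramified, and the Albanese of good reduction, so lifting any of these restrictions looks delicate. For general smooth projective $X$, the Somekawa description of $F^2(X)$ is unavailable, and one would have to work directly with the $p$-adic structure of $CH_0$, likely through syntomic cohomology or logarithmic de Rham-Witt theory, matched against a suitable $p$-adic Hodge-theoretic cycle class map. A complete proof for arbitrary $X$ therefore seems well beyond current techniques, which is presumably why the paper offers only $p$-divisibility (in restricted settings) as a way to chip away at the conjecture.
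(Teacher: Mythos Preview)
The statement you are asked to prove is a \emph{conjecture}, not a theorem. The paper does not prove it and contains no proof to compare against; it is stated as an open problem attributed to Raskind and Spiess, and the paper's contribution is to verify a related (stronger, in fact) statement---$p$-divisibility of $F^2(X)$---in very restricted settings. You seem to recognize this yourself in your final paragraph, so your ``proof proposal'' is really a discussion of why the conjecture is hard, not a proof.

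That said, your sketch contains a concrete error worth flagging. The isomorphism $CH_0(X)/\ell^n \xrightarrow{\sim} H^{2d}_{\et}(X,\mu_{\ell^n}^{\otimes d})$ is \emph{not} a consequence of the Merkurjev--Suslin or Bloch--Kato norm residue theorem; those concern Milnor $K$-theory of fields, not Chow groups of varieties. The prime-to-$p$ structural result for $F^1(X)$ over $p$-adic fields is the theorem of Saito and Sato \cite{Saito/Sato2010}, which the paper explicitly cites as a ``celebrated result'' and whose proof is substantially deeper than an invocation of norm residue. Moreover, that result concerns $F^1(X)$, and passing the decomposition down to the subgroup $F^2(X)$ is not automatic, since subgroups of divisible groups need not be divisible. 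Your remarks on the $p$-primary side are accurate in spirit: the Somekawa description is only available for products of curves, and even there the paper needs $p$ odd, $k/\Q_p$ unramified, and good reduction with at least one ordinary factor.
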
 This conjecture was inspired by earlier considerations of Colliot-Th\'{e}l\`{e}ne, \cite[Conjecture~1.4 (d, e, f)]{Colliot-Thelene1993}. 
A celebrated result in this direction is due to Saito and Sato (\cite{Saito/Sato2010}), who proved a weaker form of \autoref{local}, namely that the group $F^1(X)$  is the direct sum of a finite group and a group divisible by any integer $m$ coprime to $p$.
 The full conjecture has been verified in very limited cases including 
rationally connected varieties with good reduction  \cite[Theorem 5]{Kollar/Szabo2003} (those in fact satisfy $F^2(X)=0$) and certain products of curves, \cite[Theorem 1.1]{Raskind/Spiess2000}, \cite[Theorem 1.2]{Gazaki/Leal2018}. Moreover, for rationally connected varieties with semistable reduction and abelian varieties with good ordinary reduction it has been established that the group $F^2(X)$ is the direct sum of a divisible group and a torsion group (cf.\ \cite[Corollary 9]{Kollar/Szabo2003},  and \cite[Theorem 1.1]{Gazaki2019} respectively). We also refer to \cite{Parimala/Suresh1995} for a list of results on $CH_0$ for quadric fibrations over various types of fields, including number fields. 
 
 When $X$ has good reduction, it follows by \cite[Theorem 0.3, Corollary 0.10]{Saito/Sato2010} and \cite[Theorem 1]{Kato/Saito1983} that the group $F^1(X)$ is $l$-divisible for every prime $l\neq p$. For the classes of varieties for which \autoref{local} has been verified, this is also true for the group $F^2(X)$. It is natural therefore to ask if the same holds for the ``$p$''-part, a question which only makes sense for the group $F^2(X)$. 
 \begin{que}\label{introque} Suppose that the variety $X$ has good reduction and that  \autoref{local} holds for $X$. Is the group $F^2(X)$ $p$-divisible? 
 \end{que}
 The answer is no in general. In fact all the known results (\cite{Raskind/Spiess2000, Yamazaki2005, Hiranouchi2014, Gazaki/Leal2018})  indicate that the group $F^2(X)/p$ is nontrivial when $k$ is \textit{ramified enough}. 
 The purpose of this article is to investigate what happens when $k$ is unramified over $\Q_p$. We expect that in this case \autoref{introque} should have an affirmative answer, at least for certain classes of varieties,  including some cases of bad reduction.  This expectation is strongly motivated by certain local-to-global expectations for zero-cycles, which will be discussed in more detail in \autoref{localtoglobalintro}. 
We suggest the following conjecture. 
\begin{conj}\label{unramifieddivisibility} Suppose that $k$ is a finite unramified extension of $\Q_p$. Let $X=C_1\times\cdots\times C_r$ be a product  of smooth projective curves over $k$ such that for $i=1,\ldots,r$, $C_i(k)\neq\emptyset$.  Suppose we are in one of the following two situations.
\begin{enumerate}[label=$(\alph*)$]
\item The Jacobian variety $J_i$ of $C_i$ has good reduction, for  $i=1,\ldots,r$. 
\item The Jacobian variety $J_i$ of $C_i$ has split multiplicative reduction, for $i=1,\ldots,r$, that is, $C_i$ is a Mumford curve over $k$. 
\end{enumerate}  
 Then, the kernel of the Albanese map $F^2(X)$ is $p$-divisible. 
 \end{conj}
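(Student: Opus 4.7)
My plan is to exploit the Raskind-Spiess framework to reduce to the two-factor Somekawa $K$-group, and then attack $p$-divisibility by a Galois-symbol argument tailored to each reduction type; the unramified hypothesis on $k$ enters in both cases to kill the obstructions that block $p$-divisibility in the general ramified setting.

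\textbf{Step 1: Reduction to the two-factor case.} Because each $C_i$ has a $k$-rational point, the Raskind-Spiess machinery decomposes $F^2(X)$ as a sum of images of Somekawa $K$-groups $K(k;J_{i_1},\dots,J_{i_s})$ for subsets of the Jacobians. By induction on $r$ and repeated use of the projection/inclusion splittings attached to the rational points, I reduce the desired $p$-divisibility to $p$-divisibility of $F^2(C_i\times C_j)\cong K(k;J_i,J_j)$ for every pair $i,j$.

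\textbf{Step 2: Case (a), good reduction.} Fix a pair $(A_1,A_2)=(J_i,J_j)$. I study the Galois symbol
\[ s:\ K(k;A_1,A_2)/p\ \longrightarrow\ H^2\bigl(k,\,A_1[p]\otimes A_2[p]\bigr). \]
Good reduction means $A_1[p]\otimes A_2[p]$ extends to a finite flat group scheme $\mathcal M$ over $\mathcal O_k$, so $s$ factors through $H^2_{\mathrm{fl}}(\mathcal O_k,\mathcal M)$. Since $k$ is absolutely unramified and $p$ is odd, $\mathcal M$ sits in the Fontaine-Laffaille window and its flat cohomology can be computed via syntomic/Fontaine-Laffaille duality. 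I then verify that the relevant component of $H^2_{\mathrm{fl}}$ is killed by the natural comparison with unramified cohomology, so the image of $s$ vanishes. Injectivity (or at least a $p$-divisible kernel) of $s$ modulo $p$ is obtained by a Merkurjev-Suslin style argument for Somekawa groups attached to abelian varieties, in the spirit of Hiranouchi and Gazaki-Leal, but adapted to unramified good reduction.

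\textbf{Step 3: Case (b), Mumford curves.} Rigid-analytic uniformization gives $J_i^{\mathrm{an}}\cong T_i/\Lambda_i$ with $T_i$ a split torus of rank $g_i$ and $\Lambda_i$ a discrete Galois-trivial lattice. The $p$-torsion of $J_i$ fits into a Galois-equivariant short exact sequence
\[ 0\to\mu_p^{g_i}\to J_i[p]\to \Lambda_i/p\to 0, \]
with $\Lambda_i/p\cong(\Z/p)^{g_i}$ as Galois modules. The induced filtration on $J_i[p]\otimes J_j[p]$ decomposes $K(k;J_i,J_j)/p$ into pieces built from $K_2^M(k)/p$, copies of $k^\times/p$, and copies of $\Z/p$, twisted by lattice data. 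Since $k$ is unramified over $\Q_p$ with $p$ odd, $\mu_p\not\subset k$, and Tate's classical computation of $K_2$ of a local field gives $K_2^M(k)/p=0$; the remaining $k^\times/p$ and $\Z/p$ contributions are absorbed or cancelled by the pairing against the lattice quotients and the anti-symmetry relations of the Somekawa group, yielding the claimed $p$-divisibility.

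\textbf{Main obstacle.} The hardest step is Step 2: controlling the Galois symbol for good-reduction abelian varieties uniformly in the reduction type. The module $A_1[p]\otimes A_2[p]$ has Hodge-Tate weights in $\{0,1,2\}$, and the weight-$1$ part can in principle produce nontrivial $H^2$ classes; the unramified hypothesis is precisely what forces the weight-$1$ Fontaine-Laffaille modules to have vanishing flat cohomology, consistent with the known ramified counterexamples of \cite{Yamazaki2005, Hiranouchi2014, Gazaki/Leal2018}. Extending the injectivity analysis beyond the elliptic-curve setting already handled in those works to general Jacobians, particularly in the supersingular or mixed-reduction regimes, is the core technical challenge.
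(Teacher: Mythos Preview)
The statement you are addressing is a \emph{conjecture}, not a theorem; the paper does not prove it in full. The paper establishes only special cases: products of elliptic curves with at most one supersingular factor (\autoref{main1}, \autoref{product}), and products of Mumford curves (\autoref{split tori}, \autoref{Mumford curves}). Your proposal claims a complete proof, so it must be held to that standard.

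Your Step~1 is correct and matches the paper's reduction strategy: the decomposition \eqref{F^2} together with the associativity of Mackey products (as in the proof of \autoref{product}) reduces the conjecture to the vanishing of $(J_i/p\otimes J_j/p)(k)$ for every pair $i,j$.

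Your Step~3 is essentially correct and close in spirit to the paper's argument in \autoref{split tori}, though the paper's route is more direct. Rather than filtering $J_i[p]\otimes J_j[p]$ and tracking the graded pieces, the paper simply uses the surjection of Mackey functors $T_i/p\twoheadrightarrow A_i/p$ coming from the uniformization $A_i(k')\simeq T_i(k')/L_i$, which immediately gives a surjection $(\G_m/p\otimes\G_m/p)(k)^{\oplus g_ig_j}\twoheadrightarrow (A_i/p\otimes A_j/p)(k)$, and then invokes $K_2^M(k)/p=0$ for $k$ unramified. Your filtration argument would need to justify why the $k^\times/p$ and $\Z/p$ pieces are ``absorbed or cancelled,'' which you leave unexplained; the paper's surjection bypasses this entirely.

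Your Step~2 has a genuine gap, and it is precisely the gap that makes the full conjecture open. You assert that injectivity of the Galois symbol $s_p$ ``is obtained by a Merkurjev--Suslin style argument\ldots\ in the spirit of Hiranouchi and Gazaki--Leal.'' But injectivity of $s_p$ is \emph{not} known in general: the paper poses it as an open question even for the self-product of a single supersingular elliptic curve (\autoref{injectivity}), and explicitly remarks that the Mackey product $(E/p\otimes E/p)(L)$ ``does not seem to give enough relations that guarantee injectivity.'' The cited works establish injectivity only in the ordinary or mixed ordinary/supersingular cases, and the paper's method for those cases (Theorems~\ref{main2} and~\ref{main1}) does not proceed via the Galois symbol at all: it works directly in the Mackey product, exhibiting explicit generators as norms via careful ramification bookkeeping (Lemmas~\ref{resdec}, \ref{jump}, \autoref{psi}). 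Your Fontaine--Laffaille sketch for the vanishing of the image is likewise unsubstantiated: you correctly identify the weight-$1$ piece as the danger zone, but the claim that the unramified hypothesis forces its flat $H^2$ to vanish is asserted without proof, and no such argument appears in the literature for general Jacobians. In short, Step~2 presumes exactly the two ingredients---injectivity of $s_p$ and control of its target---whose absence is the reason the statement remains a conjecture.
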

 
Our first significant evidence  for the situation ($a$) of the above conjecture is the following theorem, which constitutes the main result of this article. 
\begin{theo}[cf.\ \autoref{main1}]\label{maintheointro}  Let $X=E_1\times E_2$ be a product of two elliptic curves  over a finite unramified extension $k$ of $\Q_p$ with good reduction, where $p$ is an odd prime. Suppose that one of the curves has good ordinary reduction. Then, the Albanese kernel $F^2(X)$ is $p$-divisible. 
\end{theo}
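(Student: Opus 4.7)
The starting point is to reformulate $F^2(X)$ in the language of Somekawa $K$-groups. Since $E_i(k) \neq \emptyset$, the results of Raskind--Spiess \cite{Raskind/Spiess2000} yield an isomorphism
\[ F^2(X) \cong K(k; E_1, E_2), \]
where $K(k; E_1, E_2)$ is the Somekawa $K$-group, a quotient of $E_1(k) \otimes_\Z E_2(k)$ by Weil reciprocity relations coming from rational functions on curves mapping into $E_1 \times E_2$. The theorem is thus equivalent to showing $K(k; E_1, E_2)/p = 0$.

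Next, I would use the reduction filtration $E_i(k) \supset E_i^1(k) = \hat{E}_i(\mathfrak{m}_k) \supset E_i^2(k) \supset \cdots$, with $E_i(k)/E_i^1(k) \cong E_i(\kappa)$ finite and $E_i^n/E_i^{n+1} \cong \kappa$ via the formal group law. Filtering $K(k; E_1, E_2)$ by the images of $E_1^a(k) \otimes E_2^b(k)$ reduces the problem to $p$-divisibility on each graded piece. Because $k$ is unramified, a uniformizer of $\mathcal{O}_k$ can be taken to be $p$ itself, so multiplication by $p$ on a formal group shifts the reduction filtration by exactly one step; consequently any symbol $\{a_1, a_2\}$ with one factor sufficiently deep in the formal group already lies in $p \cdot K(k; E_1, E_2)$. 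This reduces the problem to a quotient supported on $E_1(\kappa) \otimes E_2(\kappa)$ together with low-level mixed symbols.

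This remaining finite part will be probed by the Galois symbol
\[ s \colon K(k; E_1, E_2)/p \longrightarrow H^2\bigl(k,\, E_1[p] \otimes_{\F_p} E_2[p]\bigr). \]
The good ordinary hypothesis on, say, $E_1$ enters decisively via Serre--Tate theory: $E_1[p]$ sits in a short exact sequence of Galois modules
\[ 0 \to \mu_p \otimes \chi \to E_1[p] \to \F_p \otimes \chi^{-1} \to 0, \]
with $\chi$ an unramified character, hence trivial after an unramified base change. Combined with local Tate duality and the explicit description of $k^\times/(k^\times)^p$ for $k$ unramified over $\Q_p$, this identifies the target of $s$ and matches its image with Weil reciprocity relations already present in the Somekawa group; the assumption that $p$ is odd is used to avoid $2$-primary anomalies in local duality. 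Putting these together, the low-level symbols either vanish modulo $p$ or are absorbed into the $p$-divisible formal-group contributions, yielding $K(k; E_1, E_2)/p = 0$.

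The main obstacle, and the technical heart of the proof, will be the final matching step: showing that the image of the Galois symbol is captured by relations that actually hold inside the Somekawa group, rather than merely by their cohomological shadow. This is precisely where the good ordinary reduction of one of the $E_i$ is essential, since it reduces the relevant Galois cohomology computations to those of $\mu_p$ and constant sheaves over an unramified extension of $\Q_p$, accessible via Hilbert $90$ and local class field theory. Without the ordinary hypothesis one would face supersingular $p$-divisible groups whose local Galois cohomology is much more delicate and where no analogous clean Serre--Tate decomposition exists.
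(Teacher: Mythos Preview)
Your setup is correct: the Raskind--Spiess isomorphism reduces the statement to $K(k;E_1,E_2)/p=0$, and you rightly identify the connected-\'etale sequence for the ordinary curve as the key structural input. However, the proposed route has a genuine gap.

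The plan to control the ``remaining finite part'' via the Galois symbol $s_p:K(k;E_1,E_2)/p\to H^2(k,E_1[p]\otimes E_2[p])$ presupposes that $s_p$ is injective. This is not known in the generality you need; the paper in fact leaves the analogous injectivity as an open question (for the supersingular self-product), and the method here deliberately avoids relying on it. Your ``matching step''---showing that the image of $s_p$ is accounted for by relations in the Somekawa group---is precisely the missing content, and nothing you have written indicates how to carry it out. Likewise, the filtration claim (``multiplication by $p$ shifts the formal-group filtration by one, so deep symbols are already in $pK$'') is not correct as stated: over $k$ unramified one does not have $\widehat{E}_i[p]\subset\widehat{E}_i(k)$, so $\widehat{E}_i(k)/p$ has no clean graded description, and in any case the passage from $p$-divisibility of a coordinate to $p$-divisibility of a \emph{symbol} requires a norm argument you have not supplied.

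The paper's actual argument works at the level of the Mackey product $(E_1\otimes E_2)(k)/p$ (which surjects onto $K/p$) and proceeds in two explicit steps after first reducing to $(\widehat{E}_1\otimes\widehat{E}_2)(k)/p$ (\autoref{formalproduct}). It passes to the tame extension $L=k(\widehat{E}_1[p],\widehat{E}_2[p])$, of degree prime to $p$, so that $\res_{L/k}$ is injective. Step~1 (\autoref{main2}) shows that $(E_1\otimes E_2)(k')/p$ is generated by symbols $\{a,b\}_{k'/k'}$ defined over the base; this uses the concrete identification of $\widehat{E}_i(L)/p$ with unit-group Mackey functors $\overline{U}^j$ and, in the mixed ordinary/supersingular case, a Galois-averaging computation showing one summand dies under $N_{L/k}$. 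Step~2 (\autoref{main1}) shows each such symbol vanishes via the ``vanishing trick'': over $L$ one writes $\{a,b\}_{L/L}$, takes the degree-$p$ extension $L_1=L(\sqrt[p]{a})$ (or $L(\frac{1}{p}b)$), and uses Kawachi's ramification-jump formulas together with the Herbrand function (Propositions~\ref{graded quotients}, \ref{psi}, \autoref{jump}, \autoref{resdec}) to verify that the other coordinate lies in the image of $N_{L_1/L}$. The Galois symbol appears only peripherally, to identify generators of the cyclic groups $(\overline{U}^0\otimes\overline{U}^j)(L)\simeq\Z/p$, never as a detection tool for vanishing.
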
 
 We note that \autoref{local} has already been established for such products by joint work of the first author with Isabel Leal (\cite[Theorem 1.2]{Gazaki/Leal2018}).  
Moreover, using easy descent arguments, and related work of Takao Yamazaki (\cite{Yamazaki2005}), we verify \autoref{unramifieddivisibility} in the following additional cases,  including the situation ($b$) of \autoref{unramifieddivisibility}. 
\begin{cor}[cf.\ Corollaries \ref{product}, \ref{extensions}, and \ref{Mumford curves}] Let $k$ be a finite unramified extension of $\Q_p$, with $p$ is odd. Then, \autoref{unramifieddivisibility} is true for each of the following classes of varieties. 
\begin{enumerate}[label=$(\mathrm{\alph*})$]
\item An abelian variety $A$ such that there is an isogeny $A\xrightarrow{\phi}E_1\times\cdots\times E_r$ of degree coprime to $p$, where $E_i$ are elliptic curves over $k$ with good reduction with at most one having good supersingular reduction. 
\item A principal homogeneous space $X$ of an abelian variety $A$, such that  $X\otimes_k L\simeq A\otimes_k L$ for some finite extension $L/k$ of degree coprime to $p$ and  with $A$ as in $(\mathrm{a})$.
\item  A product $X=C_1\times\cdots\times C_r$ of Mumford curves over $k$. 
\end{enumerate}
\end{cor}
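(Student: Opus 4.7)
My strategy is to reduce each of the three parts to \autoref{maintheointro} by a combination of restriction--corestriction and isogeny transfer, the Raskind--Spiess decomposition of $F^2$ into Somekawa $K$-groups for $(\mathrm{a})$, and the rigid-analytic uniformization of split multiplicative Jacobians together with $p$-divisibility of Milnor $K$-theory for $(\mathrm{c})$. The recurring transfer principle is: if $f,g$ are homomorphisms of abelian groups with $g\circ f = n$ for some integer $n$ coprime to $p$, then $p$-divisibility transfers from the target of $f$ to the source via a B\'ezout identity $1 = an + bp$, which rewrites any source element $x$ as $x = p\bigl(a\,g(y) + b\,x\bigr)$ where $f(x) = p\,y$.

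For $(\mathrm{a})$, the isogeny $\phi\colon A\to E_1\times\cdots\times E_r$ has a complementary isogeny composing to multiplication by the (coprime-to-$p$) degree on both sides, so the transfer principle reduces the claim to $p$-divisibility of $F^2(E_1\times\cdots\times E_r)$. I would then apply the Raskind--Spiess decomposition
\[
F^2(E_1\times\cdots\times E_r) \;\simeq\; \bigoplus_{\substack{S\subset\{1,\ldots,r\}\\ |S|\geq 2}} K(k;\{E_i\}_{i\in S}),
\]
and treat each summand separately. For $|S|=2$ the summand is $F^2(E_i\times E_j)$, and since at most one $E_i$ is supersingular, each such pair contains an ordinary factor, so \autoref{maintheointro} yields $p$-divisibility. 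For $|S|\geq 3$, additivity of Somekawa symbols in each slot gives a surjection $K(k;E_{i_1},E_{i_2})\otimes\bigotimes_{\ell\geq 3}E_{i_\ell}(k) \twoheadrightarrow K(k;\{E_i\}_{i\in S})$; since the pair factor is already $p$-divisible, so is the target.

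For $(\mathrm{b})$, restriction--corestriction gives $\Cor_{L/k}\circ\res_{L/k} = [L:k]$ on $F^2(X)$, and $X_L\simeq A_L$ forces $F^2(X_L) = F^2(A_L)$, so the transfer principle reduces the problem to $p$-divisibility of $F^2(A_L)$. Decomposing $L/k$ as an unramified subextension $L_0/k$ followed by the tamely ramified part $L/L_0$ (each of degree coprime to $p$), a further restriction--corestriction along $L/L_0$ reduces further to $F^2(A_{L_0})$. Since $L_0$ is again a finite unramified extension of $\Q_p$ and the isogeny hypothesis together with the ordinary/supersingular dichotomy is preserved under unramified base change, this is precisely part $(\mathrm{a})$ applied over $L_0$.

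For $(\mathrm{c})$, each Mumford Jacobian $J_i$ of genus $g_i$ admits a rigid-analytic uniformization fitting in $0\to\Lambda_i\to(k^*)^{g_i}\to J_i(k)\to 0$. Combined with multilinearity of Somekawa symbols and the compatibility with Steinberg relations established in \cite{Yamazaki2005}, this shows that each Raskind--Spiess summand $K(k;J_{i_1},\ldots,J_{i_s})$ is a quotient of a finite direct sum of copies of $K^M_s(k)$. It therefore suffices to prove that $K^M_s(k)$ is $p$-divisible for $s\geq 2$. By the norm-residue isomorphism, $K^M_s(k)/p\simeq H^s(k,\mu_p^{\otimes s})$: for $s\geq 3$ this vanishes by cohomological dimension two, and for $s = 2$ local Tate duality identifies it with $(\mu_p^{\otimes -1})^{G_k}$, which vanishes because $k$ unramified over $\Q_p$ with $p$ odd forces $\mu_p\not\subset k$ (since $\Q_p(\mu_p)/\Q_p$ is totally tamely ramified of degree $p-1\geq 2$). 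The main obstacle is checking that Yamazaki's symbol framework genuinely delivers the required surjectivity from Milnor $K$-theory onto each Raskind--Spiess summand in the Mumford case; I would extract the necessary statement from the explicit analysis of Somekawa symbols for split multiplicative semi-abelian varieties in \cite{Yamazaki2005}.
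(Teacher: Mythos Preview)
Your treatment of part $(\mathrm{c})$ is correct and matches the paper: the uniformization gives a surjection of Mackey functors $\G_m^{\oplus g_i}/p\twoheadrightarrow J_i/p$, so by right-exactness of the Mackey tensor each two-variable summand is a quotient of copies of $(\G_m/p\otimes\G_m/p)(k)\simeq K_2^M(k)/p$, and this vanishes since $\mu_p\not\subset k$ for $k/\Q_p$ unramified with $p$ odd. Your worry about extracting the surjectivity from \cite{Yamazaki2005} is unnecessary; it is immediate once the uniformization is phrased as a surjection of Mackey functors.

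Parts $(\mathrm{a})$ and $(\mathrm{b})$, however, each contain a genuine gap. In $(\mathrm{a})$, the surjection you claim for $|S|\geq 3$,
\[K(k;E_{i_1},E_{i_2})\otimes\bigotimes_{\ell\geq 3}E_{i_\ell}(k)\;\twoheadrightarrow\; K(k;E_{i_1},\ldots,E_{i_s}),\]
does not exist: the target is generated by symbols $\{a_1,\ldots,a_s\}_{K/k}$ over \emph{all} finite extensions $K/k$, and there is no reason the entries $a_3,\ldots,a_s\in E_{i_j}(K)$ should be restrictions of $k$-rational points. You are confusing the single group $K(k;E_{i_1},E_{i_2})$ with the Mackey functor $K\mapsto (E_{i_1}\otimes E_{i_2})(K)$. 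The paper's argument is precisely to use associativity of the \emph{Mackey} tensor product, writing the $s$-fold product as $(E_{i_1}/p\otimes E_{i_2}/p)\otimes(E_{i_3}/p\otimes\cdots)$ as Mackey functors; this keeps the information at every finite extension, which your ordinary tensor product with $E_{i_\ell}(k)$ throws away.

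In $(\mathrm{b})$, your second restriction--corestriction goes in the wrong direction. You correctly reduce to showing $F^2(A_L)/p=0$, but then propose to ``reduce further to $F^2(A_{L_0})$'' via the maximal unramified subextension $L_0\subset L$. The identity $N_{L/L_0}\circ\res_{L/L_0}=[L:L_0]$ only tells you that $\res_{L/L_0}:F^2(A_{L_0})/p\hookrightarrow F^2(A_L)/p$ is injective; knowing $F^2(A_{L_0})/p=0$ gives no control whatsoever over $F^2(A_L)/p$. In fact the paper's own computations (see Claim~1 in the proof of \autoref{main2}) show that $(E_1\otimes E_2)(L')/p$ is typically \emph{nonzero} over the auxiliary tame extensions $L'$ it introduces, so one cannot expect vanishing over an arbitrary coprime-to-$p$ extension of $k$. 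The paper instead runs the isogeny transfer directly over $L$; your decomposition of $L/k$ does not supply the needed implication.
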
 

Our current techniques work well only for odd primes $p$. However, we have no significant reason to exclude $p=2$ from \autoref{unramifieddivisibility}. 
When $X=E_1\times E_2$ with both curves having good supersingular reduction we only obtain  a partial result, which we will state in Section~\ref{mainresults}. Before doing that, we would like to discuss some connections of \autoref{unramifieddivisibility} with certain local-to-global expectations.

\subsection{Local-to-Global Approximations for zero-cycles}\label{localtoglobalintro}
For a smooth projective geometrically connected variety $X$ over a  number field $F$, it is customary to consider the diagonal embedding $X(F)\hookrightarrow X(\mathbf{A}_F)$ to the set of adelic points, $X(\mathbf{A}_F):=\prod_{v\in\Omega}X(F_v)$, 
 where $\Omega$ is the set of places in $F$.  When $X(F)\neq\emptyset$, the question that arises is whether $X$ satisfies weak approximation, that is, whether $X(F)$ is dense in $X(\mathbf{A}_F)$. The Brauer group $\Br(X)$ of $X$ is known to often obstruct Weak Approximation (cf.~\cite{Poonen2017}, \cite{Skorobogatov2001}). Namely, it gives rise to an intermediate closed subset, $X(F)\subset X(\mathbf{A}_F)^{\Br(X)}\subset X(\mathbf{A}_F)$, which is often properly contained in $X(\mathbf{A}_F)$. This obstruction is called \textit{Brauer-Manin obstruction to Weak Approximation}.

 Although this obstruction cannot always explain the failure of Weak Approximation for points, its zero-cycles counterparts are conjectured to explain all phenomena. We are particularly interested in the following conjecture.
\begin{conj}[{\cite[Section 4]{Colliot-Thelene/Sansuc1981}, \cite[Section 7]{Kato/Saito1986}, see also \cite[Conjecture~1.5 (c)]{Colliot-Thelene1993} and \cite[Conjecture~($E_0$)]{Wittenberg2012}}]
\label{locatoglobalconj} \label{ltgconj}
Let $X$ be a smooth projective geometrically connected variety over a  number field $F$. 
 The following complex is exact, 
\[\varprojlim_{n}  F^1(X)/n\xrightarrow{\Delta}
\varprojlim_{n}F^1_{\mathbf{A}}(X)/n\rightarrow\Hom(\Br(X)/\Br(F),\Q/\Z).\] 
\end{conj}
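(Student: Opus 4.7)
The plan is to reduce \autoref{ltgconj} to two auxiliary statements: an analogue of the conjecture for the Albanese variety $\Alb_X$, and a vanishing statement for the profinite completion of $F^2$ at the adelic level. Starting from the short exact sequence
\[
0 \longrightarrow F^2(X) \longrightarrow F^1(X) \xrightarrow{\alb_X} \Alb_X(F) \longrightarrow 0,
\]
which is available globally in the product-of-curves setting (where the Albanese map is surjective by \cite{Raskind/Spiess2000}), and forming the analogous sequence over each completion $F_v$, I would apply $\varprojlim_n (-)/n$ to obtain a commutative $3\times 3$ diagram relating the $F^1$-complex of \autoref{ltgconj} to the corresponding $\Alb_X$-complex. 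Since the Brauer--Manin pairing on zero-cycles is compatible with the Albanese filtration, a diagram chase would reduce exactness of the $F^1$-complex to (i) the analogous exactness for $\Alb_X(F)$ paired against $\Br(X)/\Br(F)$, which is $\cite[\text{Conjecture } (E_0)]{Wittenberg2012}$ for the abelian variety $\Alb_X$ (known modulo the finiteness of Tate--Shafarevich for many abelian varieties), together with (ii) a vanishing statement for $\varprojlim_n F^2_{\mathbf A}(X)/n$ modulo the image of the diagonal.

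The results of this paper would enter through (ii). By \cite{Saito/Sato2010} combined with \cite{Kato/Saito1983}, at every finite place $v$ of residue characteristic different from $p$ the group $F^1(X_v)/n$ is finite, and it vanishes at almost all such $v$ by smooth-proper base change, so only the $p$-adic places genuinely contribute to the prime-to-$p$ summands of the adelic profinite completion. For the $p$-primary part, one would invoke \autoref{maintheointro} (and its corollaries) at the unramified $p$-adic places of good reduction to annihilate $F^2(X_v)/p$ there, so that the residual contribution to $\varprojlim_n F^2_{\mathbf A}(X)/n$ is concentrated in finitely many ramified or bad-reduction $p$-adic places. At those, one would combine \autoref{local} with a direct analysis of the $p$-primary cokernel of the diagonal $F^2(X) \to \prod_v F^2(X_v)$.

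The principal obstacle is exactly this last point: the divisibility theorems established in this paper are restricted to unramified bases, whereas a global variety $X/F$ will generically have $p$-adic places of arbitrary ramification, and the phenomena recorded in \cite{Yamazaki2005, Hiranouchi2014, Gazaki/Leal2018} confirm that $F^2(X_v)/p$ can be nontrivial once $F_v$ is ramified enough. A secondary, more technical, obstacle is the transcendental part of $\Br(X)$, which does not pull back from $\Alb_X$ and therefore escapes the reduction above; handling it would require working directly with the Kato--Saito idele class group of \cite{Kato/Saito1986} and verifying compatibility of the Albanese filtration with the Brauer--Manin pairing in positive codimension. Until both points are resolved in general, the role of the present paper for \autoref{ltgconj} is to supply unconditional evidence at the unramified $p$-adic places, where the $p$-part of the local factor is shown to vanish for the product-of-curves cases treated here.
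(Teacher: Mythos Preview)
The statement you are addressing is a \emph{conjecture}, not a theorem: the paper does not prove \autoref{ltgconj}, and there is no proof in the paper to compare against. Your write-up implicitly acknowledges this by listing unresolved obstacles and concluding that the paper's role is to supply evidence rather than a proof; but you should be explicit that no proof is being offered.

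That said, your outline of how one would attack the conjecture, and of where the paper's results enter, matches the paper's own strategy for producing evidence. The reduction step you describe is essentially \autoref{complexreduction}: using the Raskind--Spiess decomposition $F^1(X)\simeq \Alb_X(F)\oplus F^2(X)$ to split the complex into an Albanese piece (handled, under a Tate--Shafarevich assumption, via Cassels/Colliot-Th\'el\`ene and local Tate duality) and an $F^2$-piece. Your step (ii) is exactly the content of \autoref{localtoglobal}, which shows $\varprojlim_{n\in T_S} F^2_{\mathbf{A}}(X)/n=0$ for an infinite set of primes $T_S$, using \autoref{main1} at unramified $p$-adic places of good reduction and \cite{Saito/Sato2010} elsewhere. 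One small correction: the transcendental Brauer group is not an additional obstacle beyond what the paper already isolates---the compatibility of the filtration $F^2\subset F^1$ with $\Br_1\subset\Br$ is established in the paper (see the discussion preceding \autoref{bigconj}), so the $F^2$-complex is naturally paired against $\Br(X)/\Br_1(X)$, and the finiteness of that quotient (by \cite{Skorobogatov/Zharin2014}) is already invoked. The genuine obstruction, as you correctly identify, is the ramified and supersingular places, which the paper leaves open.
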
  
Here, the adelic Chow group $F^1_{\mathbf{A}}(X)$ is essentially $\prod_{v\in\Omega_f}F^1(X\otimes_F F_v)$ with a small contribution from the infinite real places, 
where $\Omega_f$ is the set of all finite places in $F$.  For a precise definition see Section~\ref{ltgsection}. 

\autoref{ltgconj} was originally suggested by Colliot-Th\'{e}l\`{e}ne and Sansuc (\cite{Colliot-Thelene/Sansuc1981}) for geometrically rational varieties built upon some evidence. It was later extended to general varieties by Kato and Saito (\cite{Kato/Saito1983}). However, to this day the only evidence we have for this  conjecture is for several classes of rationally connected varieties, starting with the work of Colliot-Th\'{e}l\`{e}ne, Sansuc and Swinnerton-Dyer on Ch\^{a}telet surfaces (\cite{CT/Sansuc/SwinnertonDyerI, CT/Sansuc/SwinnertonDyerII}), and continued by multiple authors (cf.\ \cite{Wittenberg2018} for a great survey article). There is some recent partial evidence for $K3$ surfaces by work of Ieronymou (\cite{Ieronymou2018}), which is the only known result for varieties with positive geometric genus.

We are interested to see whether the above conjecture has any chance of being true for a product $X=C_1\times C_2$ of two curves over $F$ having an $F$-rational point. In what follows we do a quantitative analysis of this problem. In this case the Albanese variety $\Alb_X$ is just the product $J_1\times J_2$ of the Jacobian varieties of $C_1, C_2$, and it follows by a result of Raskind and Spiess (\cite[Corollary 2.4.1]{Raskind/Spiess2000}) that we have a decomposition \[CH_0(X)\simeq\Z\oplus \Alb_X(F)\oplus F^2(X)\simeq\Z\oplus J_1(F)\oplus J_2(F)\oplus F^2(X).\] If we assume the finiteness of the Tate-Shafarevich group of $J_1\times J_2$, then verifying \autoref{locatoglobalconj} is reduced to proving that the following complex is exact (cf.\ \autoref{complexreduction}),
\begin{equation}\label{reduce}\varprojlim_{n}  F^2(X)/n\xrightarrow{\Delta}
\varprojlim_{n}F^2_{\mathbf{A}}(X)/n\rightarrow \Hom(\Br(X)/\Br_1(X),\Q/\Z),\end{equation} 
where $\Br_1(X):=\ker(\Br(X)\rightarrow\Br(X\otimes_F\overline{F}))$ is the \textit{algebraic Brauer group}, and the quotient $\Br(X)/\Br_1(X)$ is the \textit{transcendental Brauer group} of $X$. By a result of Skorobogatov and Zarhin (\cite{Skorobogatov/Zharin2014}) the quotient $\Br(X)/\Br_1(X)$ is finite  for such a product of curves $X$  defined over a number field. 
At the same time, the Beilinson-Bloch conjectures predict that the group $F^2(X)$ is finite, and should therefore coincide with $\varprojlim_{n}  F^2(X)/n$. As a conclusion, for \autoref{locatoglobalconj} to be compatible with the global expectations, the adelic Albanese kernel $\varprojlim_{n}F^2_{\mathbf{A}}(X)/n$ must also be finite. \autoref{unramifieddivisibility} when combined with \autoref{local} and \cite[Theorem 3.5]{Raskind/Spiess2000} imply this finiteness, suggesting that 
 the group $\varprojlim_{n}F^2_{\mathbf{A}}(X)/l^n$ vanishes, for all rational primes $l$ lying below unramified places of $F$ of good reduction. \autoref{maintheointro} yields the following corollary, which is precisely of that flavor. 

\begin{cor}[cf.\ \autoref{localtoglobal}] Let $X=E_1\times E_2$ be the product of two elliptic curves over a number field $F$.  Assume that for $i=1,2$ the elliptic curve $E_i$ has potentially good reduction at all  finite places of $F$.  There is an infinite set $T$ of  rational primes such that $\prod_{v\in\Omega}\prod_{l\in T}\varprojlim_{n}F^2(X\otimes_F F_v)/l^n=0$. In particular, the result holds when for $i=1,2$ the elliptic  curve $E_i\otimes_\Q \overline{\Q}$ has complex multiplication by the ring of integers of a quadratic imaginary field $K_i$. 
\end{cor}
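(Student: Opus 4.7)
The plan is to produce an infinite set $T$ of rational primes for which every individual factor $\varprojlim_{n} F^2(X\otimes_F F_v)/l^n$ vanishes. Let $S$ denote the finite set of finite places of $F$ at which $X=E_1\times E_2$ has bad reduction. For each $v\in S$, the hypothesis of potentially good reduction supplies a finite extension $L_v/F_v$ over which $X_{L_v}$ acquires good reduction; set $d_v=[L_v:F_v]$. Let $T_0$ be the finite set of rational primes dividing some $d_v$, let $P_F$ be the finite set of primes at which $F/\Q$ ramifies, and let $P_S$ be the finite set of residue characteristics of places in $S$. Define $T$ as the set of odd primes $l\notin P_F\cup P_S\cup T_0$ such that at every place $v$ of $F$ lying above $l$, at least one of $E_1,E_2$ has good ordinary reduction at $v$. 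Under the CM hypothesis, a prime $l$ splitting in $K_i$ automatically gives good ordinary reduction to $E_i$ at every place $v\mid l$ of good reduction, so $T$ contains (for instance) all odd primes splitting in $K_1$ and avoiding the finite exceptional set, and is therefore infinite by Chebotarev/Dirichlet.

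Next, fix $l\in T$ and verify the vanishing factor by factor. The archimedean contributions vanish automatically for $l$ odd by standard divisibility of the relevant zero-cycle groups over $\mathbb{R}$ and $\mathbb{C}$. For a finite place $v\notin S$ of residue characteristic $p_v$, the variety $X$ has good reduction at $v$: when $l\neq p_v$, the group $F^2(X_{F_v})$ is $l$-divisible by combining the Saito-Sato theorem \cite{Saito/Sato2010} with the verification of \autoref{local} for $X=E_1\times E_2$ in \cite[Theorem 1.2]{Gazaki/Leal2018}, so the inverse limit vanishes; when $l=p_v$, the hypotheses of \autoref{maintheointro} are met ($l$ is odd, $F_v/\Q_l$ is unramified because $l\notin P_F$, and some $E_i$ has good ordinary reduction at $v$ by construction of $T$), so $F^2(X_{F_v})$ is $p_v$-divisible and the inverse limit again vanishes. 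For a finite place $v\in S$, we have $l\neq p_v$ and $\gcd(l,d_v)=1$ by construction of $T$; since $X_{L_v}$ has good reduction and $l$ differs from the residue characteristic of $L_v$, the group $F^2(X_{L_v})$ is $l$-divisible by the previous case applied to $L_v$, and the standard push-pull argument---the composition $\pi_*\circ\pi^*$ acts as multiplication by $d_v$, which is invertible modulo every power of $l$---descends $l$-divisibility to $F^2(X_{F_v})$.

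The principal obstacle in this argument is the treatment of the $l=p_v$ factor at a place of good reduction, which is precisely where \autoref{maintheointro} intervenes; the construction of $T$ is engineered so that the hypotheses of that theorem are met simultaneously at every finite place of $F$ above $l$. The remaining ingredients---$l$-divisibility away from the residue characteristic provided by the earlier verification of \autoref{local} for $E_1\times E_2$, the push-pull descent at places of bad reduction, and the triviality of archimedean factors for odd $l$---are by now standard. The CM assumption enters only to make the infinitude of $T$ verifiable by Dirichlet's theorem; a parallel argument without CM would require density results on ordinary primes of the two elliptic curves.
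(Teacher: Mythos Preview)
Your proof is correct and matches the paper's own argument essentially step for step: the same finite exclusion set defining $T$, the same case analysis (archimedean places via $2$-torsion; finite good reduction with $l\neq p_v$ via Saito--Sato and $l=p_v$ via \autoref{maintheointro}; finite bad reduction via the push-pull descent, which is exactly the paper's \autoref{additivered}), and the same appeal to Deuring's criterion in the CM case. The infinitude of $T$ without the CM hypothesis, which you correctly flag as needing Serre's density result for ordinary primes, is likewise left implicit in the paper.
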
 
Unfortunately, the complement of  $T$  may be infinite, because in order to use \autoref{maintheointro}, we need to exclude  the rational primes below  
 all places of bad reduction, all ramified places, and all places where both curves have good supersingular reduction, and the latter subset is infinite.
In the present article we can only treat the case of potentially good reduction. We hope that in a future paper we will explore higher ramification cases and primes of bad multiplicative reduction, where it is very likely that global zero-cycles need to be constructed. 
 
\begin{rem}
One could suggest extending \autoref{unramifieddivisibility} to $K3$ surfaces, as most of the above analysis carries over to that case. Namely, in this case the groups $F^1(X)$ and $F^2(X)$ coincide, which allows once again to reduce \autoref{ltgconj} to proving exactness of \eqref{reduce}. Additionally, the quotient $\Br(X)/\Br_1(X)$ is finite (\cite[Theorem 1.2]{Skorobogatov/Zharin2008}). Our methods do not provide any information for those at the moment. However, if one could verify \autoref{unramifieddivisibility} for $K3$ surfaces, this would strengthen very significantly the recent result of Ieronymou (\cite[Theorem 1.2]{Ieronymou2018}), which could potentially lead to a full proof of \autoref{ltgconj} for $K3$ surfaces. Another interesting case to consider is surfaces with $p_g(X)=0$, which are not rationally connected. These are known to have finite Albanese kernel (\cite{Colliot-TheleneRaskind1991}, see also \cite[Th\'{e}or\`{e}me 2.2]{Colliot-Thelene1993}). 
\end{rem}

\subsection{Outline of the method and additional results}\label{mainresults} 
The key tool to prove \autoref{maintheointro} is the use of the Somekawa $K$-group  $K(k;E_1,E_2)$ attached to $E_1, E_2$. This group is a quotient of the group $\displaystyle\bigoplus_{L/k\text{ finite}}E_1(L)\otimes E_2(L)$, and it is a generalization of the Milnor $K$-group $K_2^M(k)$ of $k$. For a finite extension $L/k$ and points $a_i\in E_i(L)$, the image of a tensor $a_1\otimes a_2$ inside $K(k;E_1,E_2)$ is denoted as a symbol $\{a_1,a_2\}_{L/k}$. Raskind and Spiess (\cite{Raskind/Spiess2000}) proved an isomorphism, $\rho:F^2(X)\xrightarrow{\simeq}K(k;E_1,E_2)$. As an example, if $(x,y)\in X(k)$ is a $k$-rational point, then $\rho$ sends the zero-cycle $[x,y]-[x,0]-[0,y]+[0,0]\in F^2(X)$ to the symbol $\{x,y\}_{k/k}$. 

Some limited cases of \autoref{maintheointro} were obtained in \cite{Gazaki/Leal2018} for good ordinary reduction, but the arguments were very ad hoc. 
In the current article, we develop a uniform method to prove $p$-divisibility of $K(k;E_1,E_2)$. Our method roughly involves the following main steps. 

\noindent
\textbf{Step 1:} (cf.\ \autoref{main2}) We show that the $K$-group $K(k;E_1,E_2)/p$ is generated by symbols of the form $\{x,y\}_{k/k}$ for $(x,y)\in X(k)$.  

\noindent
\textbf{Step 2:} (cf.\ \autoref{main1}) We prove that all symbols of the form $\{x,y\}_{k/k}$ are $p$-divisible.  

The key to prove both steps is to consider the extension $L=k(\widehat{E}_1[p], \widehat{E}_2[p])$, where $\widehat{E}_i$ is  the formal group of $E_i$, and study the restriction map \[K(k;E_1,E_2)/p\xrightarrow{\res_{L/k}}K(L;E_1,E_2)/p.\] 
We may reduce to the case when the extension  $L/k$ is of degree coprime to $p$, in which case $\res_{L/k}$
 is injective. The advantage of looking at the restriction is that under the reduction assumptions of \autoref{maintheointro}, we have a complete understanding of the group $K(L;E_1,E_2)/p$; namely it is isomorphic to $(\Z/p\Z)^r$ for some $0\leq r\leq 2$. 
 
When both curves have good supersingular reduction, Step 2 still holds. It is much harder to establish Step 1 however. In this case we obtain the following partial result. 
\begin{theo}[cf.\ Theorems \ref{ssing1}, \ref{main3}]\label{main3intro}  Let $k$ be a finite unramified extension of $\Q_p$. Let $X=E\times E$ be the self product of an elliptic curve over $k$ with good supersingular reduction. Let $L=k(E[p])$.
Then all symbols of the form $\{a,b\}_{L/k}$ and $\{a,b\}_{k/k}$ vanish in $K(k;E,E)/p$. 
\end{theo}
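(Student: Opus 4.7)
Following the two-step strategy sketched in Section~\ref{mainresults}, the plan is to use the restriction map
$$\res_{L/k}\colon K(k;E,E)/p \longrightarrow K(L;E,E)/p$$
as the principal tool. I would first control the degree $[L:k]$. For $E$ with good supersingular reduction over unramified $k$, a theorem of Serre on Galois representations of elliptic curves shows that the action of the absolute Galois group of $k$ on $E[p]$ factors through the normaliser of a non-split torus in $\operatorname{GL}_2(\F_p)$, a group of order $2(p^2-1)$. Since $p$ is odd, $[L:k]$ divides $2(p^2-1)$ and is coprime to $p$, so the norm-restriction identity $N_{L/k}\circ\res_{L/k}=[L:k]$ yields injectivity of $\res_{L/k}$ on $K(k;E,E)/p$.

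Granted this injectivity, both assertions of the theorem reduce to the single statement that every diagonal symbol $\{x,y\}_{L/L}$ with $x,y\in E(L)$ vanishes in $K(L;E,E)/p$. For $a,b\in E(k)\subset E(L)$ one has $\res_{L/k}\{a,b\}_{k/k}=\{a,b\}_{L/L}$, while for $a,b\in E(L)$ the projection formula gives
$$\res_{L/k}\{a,b\}_{L/k}\;=\;\sum_{\sigma\in\Gal(L/k)}\{\sigma a,\sigma b\}_{L/L},$$
so both parts follow from the same assertion. Since supersingularity forces $\widetilde E(\F_L)$ to have order prime to $p$, the reduction sequence gives $E(L)/p\cong\widehat E(\mathfrak m_L)/p$, and by bilinearity we may further assume $x,y\in\widehat E(\mathfrak m_L)$.

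The remaining vanishing for pairs in the formal group is the heart of the proof and the main obstacle. Three ingredients should come into play: (i) the inclusion $E[p]\subset E(L)$ together with $\mu_p\subset L$ via the Weil pairing, which makes the Galois symbol $K(L;E,E)/p\to H^2(L,E[p]\otimes E[p])$ tractable; (ii) the filtration on $\widehat E(\mathfrak m_L)$ by powers of $\mathfrak m_L$ together with the height-$2$ formal logarithm on a deep enough subgroup, which linearises the symbol computation to symbols on $\mathbb{G}_a$; and (iii) Weil-reciprocity relations in the Somekawa group coming from explicit rational curves in $E\times E$, used to transport the resulting additive relations back into $K$-group identities that kill each symbol modulo~$p$. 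The genuine difficulty, absent in the ordinary case of Theorem~\ref{maintheointro}, is that the height-$2$ formal module over $L$ does not split off a canonical subgroup, so the reciprocity relations must be produced by hand rather than inherited from a cyclotomic piece.
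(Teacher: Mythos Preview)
Your reduction step has a genuine gap. You claim both assertions reduce to showing that every symbol $\{x,y\}_{L/L}$ with $x,y\in E(L)$ vanishes in $K(L;E,E)/p$. But this stronger statement is actually \emph{false}: by \cite[Proposition~3.6]{Hiranouchi/Hirayama2013} the Galois symbol $s_p\colon K(L;E,E)/p\to H^2(L,E[p]\otimes E[p])$ has image isomorphic to $\Z/p$, so there exist $x,y\in E(L)$ with $\{x,y\}_{L/L}\neq 0$. Indeed, the paper raises as open (and likely false) whether $s_p$ is even injective over $L$ in the supersingular case (\autoref{injectivity}). Hence none of the ingredients (i)--(iii) you list could possibly establish the vanishing you are aiming for.

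The paper's proofs of the two parts are genuinely different from each other and from your proposal. For $\{a,b\}_{k/k}$ (\autoref{ssing1}), one does show $\res_{L/k}\{a,b\}_{k/k}=\{a,b\}_{L/L}=0$, but the argument crucially uses that $a,b$ come from $k$: their valuations in $L$ are at least $p^2-1$, which together with the decomposition $\widehat{E}(L)/p\simeq\overline{U}_L^{ep^2}\oplus\overline{U}_L^{ep}$ and \autoref{resdec} forces the relevant Kummer extensions to have ramification jump at $s\leq e$, so the norm argument (\autoref{psi}) goes through. For $\{a,b\}_{L/k}$ (\autoref{main3}), the paper does \emph{not} kill individual terms in $\sum_{\sigma}\{\sigma a,\sigma b\}_{L/L}$; instead it introduces a filtration $\Fil^t$ on $(\widehat{E}/p\otimes\widehat{E}/p)(L)$ and shows this Galois-averaged sum drops filtration level. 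The mechanism is that on each graded piece $\mathcal{D}_L^i/\mathcal{D}_L^{i+1}\otimes\mathcal{D}_L^j/\mathcal{D}_L^{j+1}$ a generator $\sigma$ of $\Gal(L/k)$ acts by multiplication by $\overline{u}^{i+j}$ with $\overline{u}$ a primitive $(p^2-1)$-th root of unity, so $\sum_{r=0}^{p^2-2}\overline{u}^{r(i+j)}=0$ whenever $(p^2-1)\nmid i+j$; the exceptional cases $i+j\in\{p^2-1,2p^2-2\}$ are handled by explicit Herbrand-function norm computations. The Galois averaging, not individual vanishing, is the essential idea you are missing.
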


In order to give some content to \autoref{main3intro}, we note that in all other cases it has been established by previous work of Isabel Leal and the authors (\cite{Gazaki/Leal2018, Hiranouchi2014}) that the group $K(L;E_1,E_2)/p$ is generated by symbols $\{x,y\}_{L/L}$ defined over $L$. We expect the same to be true in the case of two elliptic curves with good supersingular reduction, and then \autoref{main3intro} would imply $p$-divisibility. However, this case appears to be much harder, and at the moment we do not have a good way to control all finite extensions. 

\subsection{Acknowledgements}  We are very grateful to Professor Jean-Louis Colliot-Th\'{e}l\`{e}ne for very useful suggestions and advice regarding mainly the local-to-global part of the paper. We would also like to heartily thank Professors Kazuya Kato, Shuji Saito and Takao Yamazaki for showing interest in our paper and for useful discussions and suggestions. Moreover, we are truly thankful to the referees, whose suggestions helped improve significantly this article. The first author was partially supported by the NSF grant DMS-2001605. 
	The second author was supported by JSPS KAKENHI Grant Number 20K03536.
	
\subsection{Notation} For a variety $X$ over a field $k$ and an extension $L/k$, we will denote by $X_L:=X\otimes_k L$ the base change to $L$. For an abelian group $A$ and an integer $n\geq 1$, we will denote by $A[n]$ and $A/n$ the $n$-torsion and $n$-cotorsion respectively. For a field $k$ we will denote by $G_k:=\Gal(\overline{k}/k)$ the absolute Galois group of $k$. Moreover, for a $G_k$-module $M$ we will denote by $H^i(k,M)$ the Galois cohomology groups of $M$, for $i\geq 0$.

\smallskip
\section{Background} In this section we review some necessary background. We start with the definitions of Mackey functors and Somekawa $K$-groups. 
Throughout this section, $k$ will be a field with characteristic 0. 

\subsection{Mackey functors} 
Following \cite[(3.2)]{Raskind/Spiess2000}, we introduce Mackey functors and their product. 
	For properties of Mackey functors, see also \cite{Kahn92a},  \cite{Kahn92b}.
\begin{defn}
A \textbf{Mackey functor} $\mathcal{M}$ (over $k$) is a presheaf of abelian groups in the category of \'{e}tale $k$-schemes equipped with  push-forward maps $f_\star:\mathcal{M}(X)\rightarrow\mathcal{M}(Y)$ for finite morphisms $X\xrightarrow{f}Y$, satisfying the following properties.
\begin{enumerate}[label=(\roman*)]
\item $\mathcal{M}(X_1\sqcup X_2)=\mathcal{M}(X_1)\oplus\mathcal{M}(X_2)$, for \'{e}tale $k$-schemes $X_1,X_2$.
\item If  $Y'\xrightarrow{g}Y$ is a finite morphism and $\begin{tikzcd}
	X'\ar{r}{g'}\ar{d}{f'}  & X\ar{d}{f} \\
	Y'\ar{r}{g} & Y
	\end{tikzcd}$ is a Cartesian diagram, then the induced diagram 
	$\begin{tikzcd}
	\mathcal{M}(X')\ar{r}{g'_\star}  & \mathcal{M}(X) & \\
	\mathcal{M}(Y')\ar{r}{g_\star}\ar{u}{f^{'\star}}  & \mathcal{M}(Y) \ar{u}{f^\star} 
	\end{tikzcd}$ commutes. 
\end{enumerate} \end{defn}
Property (i) implies that a Mackey functor $\mathcal{M}$ is fully determined by its value on $\Spec(K)$ where $K$ is a finite field extension of $k$. We will denote by $\mathcal{M}(K):=\mathcal{M}(\Spec K)$. 
\begin{notn} For finite field extensions $k\subset K\subset L$, the   map $j_\star:\mathcal{M}(L)\rightarrow\mathcal{M}(K)$ induced by the projection $\Spec (L)\xrightarrow{j}\Spec(K)$ will be denoted by $N_{L/K}:\mathcal{M}(L)\rightarrow\mathcal{M}(K)$ and will be referred to as \textbf{the norm}. Similarly, the induced pullback map $j^\star:\mathcal{M}(K)\rightarrow\mathcal{M}(L)$ will be denoted by $\res_{L/K}:\mathcal{M}(K)\rightarrow\mathcal{M}(L)$ and will be referred to as \textbf{the restriction}. 
\end{notn}
The category of Mackey functors over $k$ is abelian (\cite[p.~5]{Kahn/Yamazaki2013}, \cite[p.~14]{Raskind/Spiess2000}) with a tensor product  defined by Kahn in \cite{Kahn92a}, whose definition we review below. 
\begin{defn} For Mackey functors $\mathcal{M},\mathcal{N}$, their \textbf{Mackey product} $\mathcal{M}\otimes \mathcal{N}$ is defined as follows. For a finite field extension $k'/k$,
\begin{eqnarray}\label{MFprod}
&&(\mathcal{M}\otimes \mathcal{N})(k')=\left.\left(\bigoplus_{K/k':\text{ finite}}\mathcal{M}(K)\otimes_\Z \mathcal{N}(K)\right)\middle/(\text{\textbf{PF}}),\right.
\end{eqnarray} where (\textbf{PF}) is the subgroup generated by elements of the following form. For a tower of finite extensions $k'\subset K\subset L$, 
\begin{itemize}
\item[] (PF1) $N_{L/K}(x)\otimes y-x\otimes\res_{L/K}(y)\in (\text{\textbf{PF}})$, for elements $x\in\mathcal{M}(L)$, $y\in\mathcal{N}(K)$.
\item[] (PF2) $x\otimes N_{L/K}(y)-\res_{L/K}(x)\otimes y\in(\text{\textbf{PF}})$, for elements $x\in\mathcal{M}(K)$, $y\in\mathcal{N}(L)$.
\end{itemize} 
These relations are referred in the literature as \textbf{projection formula}.

 For $x\in\mathcal{M}(K),y\in\mathcal{N}(K)$ the image of $x\otimes y$ in $(\mathcal{M}\otimes\mathcal{N})(k')$ is traditionally denoted as a symbol $\{x,y\}_{K/k'}$. Moreover, for a finite extension $F/k'$ the norm map $N_{F/k'}$ is given by,
\begin{eqnarray*}
N_{F/k'}:&&(\mathcal{M}\otimes\mathcal{N})(F)\rightarrow (\mathcal{M}\otimes\mathcal{N})(k')\\&&
\{x,y\}_{K/F}\mapsto\{x,y\}_{K/k'}.
\end{eqnarray*} In other words, $N_{F/k'}(\{x,y\}_{K/F})=\{x,y\}_{K/k'}$, for $x\in\mathcal{M}(K)$, $y\in\mathcal{N}(K)$. 
\end{defn} 
\begin{rem} Using the symbolic notation, the projection formula (\textbf{PF}), can be rewritten as,
\begin{eqnarray}\label{projectionformula}\{N_{L/K}(x),y\}_{K/k'}=\{x,\res_{L/K}(y)\}_{L/k'}, & & \{x,N_{L/K}(y)\}_{K/k'}=\{\res_{L/K}(x),y\}_{L/k'}.\end{eqnarray} 
\end{rem}

\begin{exmp}\label{MFexs} 
\begin{enumerate}[leftmargin=0pt]
\item[] (1)  Let $G$ be a commutative algebraic group over $k$. Then $G$ induces a Mackey functor by defining $G(K):=G(\Spec K)$ for $K/k$ finite. 

\item[] (2) Let $\mathcal{M}$ be a Mackey functor and $n\in\mathbb{N}$ be a positive integer. We define a Mackey functor $\mathcal{M}/n$ as follows: $(\mathcal{M}/n)(K):=\mathcal{M}(K)/n$. 

\item[] (3) 
For every integer $n\geq 1$ and any finite extension $K/k$, 
in an unpublished work due to Kahn, we have an isomorphism
\[\left(\frac{\G_m\otimes\G_m}{n}\right)(K)\simeq \frac{K_2^M(K)}{n},\] where $K_2^M(K)$ is the Milnor $K$-group of $K$ (cf.\ \cite[Remark 4.2.5 (b)]{Raskind/Spiess2000}). 
When $k$ is a finite extension of the $p$-adic field $\Q_p$, 
this follows also from \cite[Lemma 4.2.1]{Raskind/Spiess2000}. 
\end{enumerate}
\end{exmp}

\subsection*{The restriction map} Suppose $k\subset K\subset L$ is a tower of finite extensions of $k$. In the following sections we are going to use extensively the restriction map \[\res_{L/K}:(\mathcal{M}\otimes\mathcal{N})(K)\rightarrow(\mathcal{M}\otimes\mathcal{N})(L).\] We review its definition here. Let $F/K$ be a finite extension. There is an isomorphism of $L$-algebras,
$F\otimes_K L\simeq\prod_{i=1}^nA_i$, where for each $i\in\{1,\ldots,n\}$, 
$A_i$ is an Artin local ring of length $e_i$ over $L$ with residue field $L_i$.
 Let $x\in\mathcal{M}(F)$, $y\in\mathcal{N}(F)$. Then, we define 
\[\res_{L/K}(\{x,y\}_{F/K})=\sum_{i=1}^ne_i\{\res_{L_i/F}(x),\res_{L_i/F}(y)\}_{L_i/L}.\] 

The following lemma gives a more concrete description of $\res_{L/k}$ in some special cases. 
\begin{lem}\label{restriction}
\begin{enumerate}
\item Suppose $L/k$ is a finite extension and $x\in\mathcal{M}(k)$, $y\in\mathcal{N}(k)$. Then $\res_{L/k}(\{x,y\}_{k/k})=\{x,y\}_{L/L}$. 
\item Suppose $L/k$ is a finite Galois extension and $x\in\mathcal{M}(L)$, $y\in\mathcal{N}(L)$. Let $G=\Gal(L/k)$. Then,
$\res_{L/k}(\{x,y\}_{L/k})=\res_{L/k}(N_{L/k}(\{x,y\}_{L/L}))=\sum_{g\in G}g\{x,y\}_{L/L}$. 
\end{enumerate}
\end{lem}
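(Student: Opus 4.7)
My plan is to prove both assertions by unwinding the definition of $\res_{L/k}$ on symbols recalled just before the lemma. In each case the whole calculation reduces to identifying the Artin decomposition of $F \otimes_K L$ together with the induced pullbacks on $\mathcal{M}$ and $\mathcal{N}$.

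For part (1), I would take $F = k$ and $K = k$ in the defining formula. Since $k \otimes_k L \cong L$ is a field, there is a single Artin factor with $n = 1$, $e_1 = 1$, and $L_1 = L$. The definition of $\res_{L/k}$ therefore collapses to $\res_{L/k}(\{x,y\}_{k/k}) = \{\res_{L/k}(x), \res_{L/k}(y)\}_{L/L}$, which is $\{x,y\}_{L/L}$ under the usual convention of suppressing an applied restriction from the notation.

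For part (2), the first equality is immediate from the definition of the norm: applying $N_{F/k'}(\{x,y\}_{K/F}) = \{x,y\}_{K/k'}$ with $F = K = L$ and $k' = k$ gives $N_{L/k}(\{x,y\}_{L/L}) = \{x,y\}_{L/k}$. For the second equality I would apply the restriction formula to $\{x,y\}_{L/k}$, i.e.\ with $F = L$ and $K = k$. Because $L/k$ is finite Galois, $L \otimes_k L$ is an \'etale $L$-algebra and Galois theory provides a canonical $L$-algebra isomorphism $L \otimes_k L \xrightarrow{\sim} \prod_{g \in G} L$, whose $g$-th projection $p_g$ is $a \otimes b \mapsto g(a)\,b$ (using the second factor for the $L$-algebra structure). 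Thus $n = |G|$, all $e_i = 1$, all $L_i = L$, and the composition $F = L \hookrightarrow L \otimes_k L \xrightarrow{p_g} L$ is the map $a \mapsto g(a)$. Consequently the pullback $\res_{L_i/F}\colon \mathcal{M}(L) \to \mathcal{M}(L)$ is the Galois action of $g$, and likewise on $\mathcal{N}(L)$. Substituting into the defining formula of $\res_{L/k}$ yields $\sum_{g \in G} \{g(x), g(y)\}_{L/L} = \sum_{g \in G} g\{x,y\}_{L/L}$.

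Neither part presents a genuine obstacle; the proof is essentially bookkeeping. The only point that merits care---and is arguably the ``hard'' step, though entirely standard---is verifying that under the isomorphism $L \otimes_k L \cong \prod_{g \in G} L$ the $g$-th factor records precisely the Galois action of $g$ on $\mathcal{M}(L)$, which is the familiar Galois descent check.
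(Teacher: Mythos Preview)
Your proof is correct and follows essentially the same route as the paper: both parts unwind the definition of $\res_{L/k}$ via the Artin decomposition of the relevant tensor product, with part (2) resting on $L\otimes_k L\simeq\prod_{g\in G}L$. The paper derives this decomposition concretely by writing $L=k[x]/\langle f(x)\rangle$ for a splitting polynomial $f$ and factoring $f$ over $L$, whereas you invoke the isomorphism directly and are a bit more explicit that the induced map $L\to L_i$ is the Galois action of $g$; these are the same argument.
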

\begin{proof}
The assertion (1) follows immediately, since we have an isomorphism $k\otimes_k L\simeq L$. Hence, in this case we have $n=1$, $L_i=L$ and $e_i=1$.

 To prove (2),  note that since $L/k$ is Galois, 
it is the splitting field of some polynomial $f(x) \in k[x]$ 
so that $L \simeq k[x] /\langle f(x) \rangle$. 
By taking a root $\alpha\in L$ of $f(x)$, we have 
 \[L\otimes_k L=\frac{k[x]}{\langle f(x)\rangle}\otimes_k L=\frac{L[x]}{\langle f(x)\rangle}\simeq\prod_{g \in G}\frac{L[x]}{\langle x-g(\alpha) \rangle}\simeq\prod_{i=1}^n L_i,\] where for each $i\in\{1,\ldots,n\}$ there is an isomorphism $g:L_i\xrightarrow{\simeq}L$ given by some $g\in G$. 
\end{proof}
\begin{rem}\label{normrestr} Let $\mathcal{M}$ be a Mackey functor over $k$.  
Suppose that we have an equality $N_{L/K}\circ\res_{L/K}=[L:K]$, 
for every finite extension $L/K$, e.g.\ $\mathcal{M}$ is given by a commutative algebraic group (cf.\ Example~\ref{MFexs} (1)). 
The equality implies that for every prime number $p$ coprime to $[L:K]$, the restriction map $(\mathcal{M}/p)(K)\xrightarrow{\res_{L/K}}(\mathcal{M}/p)(L)$ is injective, while the norm $(\mathcal{M}/p)(L)\xrightarrow{N_{L/K}}(\mathcal{M}/p)(K)$ is surjective. 
\end{rem}
\begin{rem}\label{vanishingtrick} (\textbf{The vanishing trick}) In the following sections we are going to prove the vanishing of certain Mackey products of the form $(\mathcal{M}/p\otimes\mathcal{N}/p)(k)$, where $p$ is a prime number. A usual strategy  to show that a symbol $\{a,b\}_{k/k}\in(\mathcal{M}/p\otimes\mathcal{N}/p)(k)$ vanishes is to consider a finite extension $k'/k$ over which  there exists an element $a'\in\mathcal{M}(k')$ with  $pa'=\res_{k'/k}(a)$.  If $b=N_{k'/k}(b')$, for some $b'\in\mathcal{N}(k')$, then the projection formula \eqref{projectionformula} yields an equality 
\[\{a,b\}_{k/k}=\{a,N_{k'/k}(b')\}_{k/k}= \{\res_{k'/k}(a),b'\}_{k'/k} 
=N_{k'/k}(\{pa',b'\}_{k'/k'})=0.\]  
This trick was used 
in \cite[Proposition 4.3]{Tate1976} 
and refined in the context of Mackey functors by Kahn in  \cite{Kahn92b}.
\end{rem} 
\begin{notn} Let $G$ be a  commutative algebraic group over $k$ and $a\in G(k)$. For every $n\geq 1$ the multiplication-by-$n$ map $G(\overline{k})\xrightarrow{n} G(\overline{k})$ is surjective. Suppose that $G[n] := G(\overline{k})[n] \subset G(k)$. We will denote by $k\left(\frac{1}{n}a\right)$ the smallest finite extension $k'$ of $k$ over which there exists an element $a'\in G(k')$ such that $na=a'$. The assumption $G[n]\subset G(k)$ implies that this is a Galois extension. 
\end{notn}

\subsection{Somekawa $K$-group} For semi-abelian varieties $G_1,\ldots, G_r$ over $k$ the \textbf{Somekawa $K$-group} $K(k;G_1,\ldots,G_r)$ attached to $G_1,\ldots, G_r$ is a quotient of the Mackey product $(G_1\otimes\cdots\otimes G_r)(k)$ (see \cite{Somekawa1990} for the precise definition).  Although our statements will often concern the $K$-group $K(k;G_1,\ldots,G_r)$, all our computations will be at the level of Mackey products, hence we omit the definition of $K(k;G_1,\ldots,G_r)$. We only highlight the  following facts.
\begin{itemize}
\item For every $K/k$ finite there is a surjection,
$(G_1\otimes\cdots\otimes G_r)(K)\twoheadrightarrow K(K;G_1,\ldots,G_r).$
\item When $G_1=\cdots=G_r=\G_m$, there is an isomorphism $K(k;G_1,\ldots,G_r)\simeq K_r^M(k)$ with the Milnor $K$-group of $k$ (cf.\ \cite{Somekawa1990}). 
\item The elements of $K(k;G_1,\ldots,G_r)$ will also be denoted as linear combinations of symbols of the form $\{x_1,\ldots,x_r\}_{K/k}$, where $K/k$ is some finite extension and $x_i\in G_i(K)$ for $i=1,\ldots, r$.
\item The Somekawa $K$-group $K(k;G_1,\ldots,G_r)$ inherits all the properties of the Mackey product $(G_1\otimes\cdots\otimes G_r)(k)$ discussed in the previous subsection.
\end{itemize}

\subsection{Galois symbol map} 
Let $G$ be a semi-abelian variety over $k$ and $p$ be a prime number.  Since we assumed that the field $k$ has characteristic zero, for any finite extension $K/k$ the Kummer sequence 
\[0\to G[p] \to G(\overline{K}) \xrightarrow{p} G(\overline{K}) \to 0\]
is a short exact sequence of $G_K$-modules and hence it induces a connecting homomorphism  
\begin{equation}
\label{Kummer map}
	\delta_{G}: G(K)/p \hookrightarrow H^1(K,G[p]), 
\end{equation}
which is often called the \textbf{Kummer map}. 

\begin{defn}[{cf.\ \cite[Proposition 1.5]{Somekawa1990}}]
\label{symbol map}
Let $G_1, G_2$ be semi-abelian varieties over $k$ and $p$ be a prime. 
By considering $G_1,G_2$ as Mackey functors (as in Example~\ref{MFexs} (1)), 
the \textbf{Galois symbol map} 
\[
s_p:(G_1\otimes  G_2)(k)/p \to H^2(k,G_1[p]\otimes  G_2[p])
\]
is defined by the cup product and the corestriction 
as follows:  
 \[
s_p(\{x,y\}_{K/k}) = \mathrm{Cor}_{K/k}\left(\delta_{G_1}(x)\cup \delta_{G_2}(y)\right).
\]
\end{defn}
%

\smallskip
For semi-abelian varieties $G_1,G_2$ over $k$, 
it is known that 
the Galois symbol  
$s_p$ defined above factors through the Somekawa $K$-group 
$K(k;G_1, G_2)/p$ (\cite[Proposition 1.5]{Somekawa1990}). 
The induced homomorphism $K(k;G_1,G_2)/p \to H^2(k,G_1[p]\otimes G_2[p])$ will also be denoted by $s_p$.
In particular, when $G_1 = G_2 = \mathbb{G}_m$, we have the following commutative diagram
\begin{equation}
\label{classical symbol}
\vcenter{
	\xymatrix{
	(\mathbb{G}_m\otimes\mathbb{G}_m)(k)/p \ar[d]_{s_p}\ar[r]^-{\simeq} &  K_2^M(k)/p \ar[ld]^{g_p} \\
	  H^2(k,\mu_p^{\otimes 2}),
	}
}
\end{equation}
where the map $g_p:K_2^M(k)/p\rightarrow H^2(k,\mu_p^{\otimes 2})$ is the classical Galois symbol, which is an isomorphism by the Merkurjev/Suslin theorem (\cite{Merkurev/Suslin1982})
 (For the isomorphism $(\G_m\otimes\G_m)(k)/p\simeq K_2^M(k)/p$ see Example~\ref{MFexs} (3)). 
Now, we suppose $\mu_p\subset k^{\times}$. In this case the map $g_p$ sends the symbol $\{x,y\}_{k/k}$ to the central simple algebra $(x,y)_p$.  
From \cite[Proposition 4.3]{Tate1976}, we have the following equivalences 
\begin{equation}
\label{Tate}	
\{x,y\}_{k/k} = 0\ \mbox{in $(\G_m\otimes\G_m)(k)/p$} 
\Leftrightarrow (x,y)_p = 0 
\Leftrightarrow x \in N_{k_1/k}(k_1^{\times}), 
\end{equation}
where $k_1 = k(\sqrt[p]{y})$. Note that the last implication $\Leftarrow$ follows from the same argument as in \autoref{vanishingtrick}.

\subsection{Relation to zero-cycles}\label{Kzero} Let $X$ be a smooth projective variety over $k$.  We consider the Chow group of zero-cycles, $CH_0(X)$. Recall that this group has a filtration \[CH_0(X)\supset F^1(X)\supset F^2(X)\supset 0,\] where $F^1(X):=\ker(\deg: CH_0(X) \to \Z)$ is the kernel of the degree map, and $F^2(X):=\ker(\alb_X:F^1(X)\to \mathrm{Alb}_X(k))$ is the kernel of the Albanese map. When $X=C_1\times\cdots\times C_r$ is a product of smooth projective, geometrically  connected curves over $k$ such that $C_i(k)\neq\emptyset$ for $i=1,\ldots, r$, the Albanese kernel $F^2(X)$ has been related to the Somekawa $K$-group attached to the Jacobian varieties $J_1,\ldots, J_r$ of $C_1,\ldots, C_r$ by Raskind and Spiess. Namely, we have the following theorem. 

\begin{theo}[{\cite[Theorem 2.2, Corollary 2.4.1]{Raskind/Spiess2000}}]\label{zerocycles} For $X=C_1\times\cdots\times C_r$ as above there is a canonical isomorphism, 
\[
\displaystyle CH_0(X)\xrightarrow{\simeq}\Z \oplus \bigoplus_{1\leq\nu\leq r}\bigoplus_{1\leq i_1<i_2<\cdots<i_\nu\leq r} K(k;J_{i_1},\ldots, J_{i_\nu}).
\]
\end{theo}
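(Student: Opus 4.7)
The plan is to prove this decomposition by constructing, for each subset $I = \{i_1 < \cdots < i_\nu\} \subset \{1,\ldots,r\}$, an explicit symbol map $\sigma_I \colon K(k; J_{i_1},\ldots, J_{i_\nu}) \to CH_0(X)$, and then using the $k$-rational points $P_i \in C_i(k)$ to manufacture a system of mutually orthogonal idempotents on $CH_0(X)$ whose eigenspaces match the images of the $\sigma_I$. The base case $r = 1$ is the classical isomorphism $CH_0(C) \xrightarrow{\sim} \Z \oplus J(k)$ coming from the degree map and the Abel--Jacobi map based at the $k$-rational point.

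To construct $\sigma_I$, I would proceed as follows. Given a basic symbol $\{a_1,\ldots,a_\nu\}_{L/k}$ with $a_j \in J_{i_j}(L)$, write $a_j = [Q_j] - [P_{i_j}]$ as a degree-zero divisor class on $C_{i_j,L}$ (possible since $C_{i_j}$ has a $k$-rational point), then form the external product zero-cycle
\[
\alpha \;:=\; \left(\prod_{j=1}^{\nu}\bigl([Q_j]-[P_{i_j}]\bigr)\right) \times \prod_{i \notin I}[P_i]
\]
on $X_L$, and set $\sigma_I(\{a_1,\ldots,a_\nu\}_{L/k}) := N_{L/k}(\alpha) \in CH_0(X)$, where $N_{L/k}$ is the proper pushforward along $X_L \to X$. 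The main well-definedness check is that this assignment descends through the Somekawa quotient: bilinearity is clear, Galois equivariance follows since $N_{L/k}$ kills Galois-coboundaries, the projection formula for the Mackey product translates directly into the projection formula for proper pushforwards of zero-cycles, and the generalized Weil reciprocity relation becomes a statement about cycles of the form $\mathrm{div}(f)$ on curves embedded in $X$, which are rationally equivalent to zero in $CH_0(X)$ by definition.

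For the inverse, use the $k$-rational points $P_i$ to define commuting idempotents $e_i := \iota_{i,*}\pi_i^{*}$ on $CH_0(X)$, where $\pi_i \colon X \to \prod_{j \neq i} C_j$ is the projection and $\iota_i$ is the section $(\mathrm{id}, P_i)$ placed in the $i$-th slot. The joint simultaneous decomposition of $CH_0(X)$ into eigenspaces of $\{e_i\}$ produces $2^r$ summands indexed by subsets $I \subset \{1,\ldots,r\}$, with $I = \emptyset$ contributing the copy of $\Z$ via the degree. Each summand for nonempty $I$ is visibly contained in the image of $\sigma_I$; the opposite inclusion, together with injectivity of $\sigma_I$ on its image, can be obtained either by inducting on $r$ (reducing to the case $|I| = r$, i.e.\ the ``deepest'' summand $F^{(r)}(X)$) or by appealing to the Bloch--Srinivas filtration and identifying its graded pieces with Somekawa groups.

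The main obstacle is verifying that $\sigma_I$ factors through the Somekawa $K$-group; equivalently, verifying surjectivity onto the deepest summand $F^{(r)}(X) := \bigcap_i \ker(e_i)$ when $I = \{1,\ldots,r\}$. Once this is granted, the decomposition for the intermediate $I$ follows by applying the same deepest-summand identification to the subproduct $\prod_{j \in I} C_{i_j}$ (a product of curves to which the induction hypothesis applies), combined with the fact that the projection formula for $e_i$ makes each eigenspace of the $e_i$ isomorphic to $CH_0(\prod_{j \in I} C_{i_j})_{\mathrm{deepest}}$. The deepest-summand identification itself is the original Somekawa calculation, carried out by comparing the presentation of $F^{(r)}(X)$ via the Gersten resolution and Bloch's formula $CH_0(X) \cong H^{\dim X}(X, \mathcal{K}^{M}_{\dim X})$ with the presentation of $K(k; J_1,\ldots, J_r)$ as a Mackey product modulo Weil reciprocity.
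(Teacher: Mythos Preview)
The paper does not prove this theorem: it is quoted verbatim from Raskind and Spiess \cite[Theorem 2.2, Corollary 2.4.1]{Raskind/Spiess2000} and used as a black box throughout, so there is no in-paper proof to compare against. Your sketch is aimed at the original Raskind--Spiess argument rather than at anything the present paper does.

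With that said, your outline is broadly in the spirit of \cite{Raskind/Spiess2000}: the idempotent decomposition of $CH_0(X)$ via the rational base points is exactly how Raskind and Spiess obtain the $2^r$ summands, and the identification of the deepest piece with the Somekawa group is indeed the heart of the matter. A few points in your write-up are imprecise, though. First, a general class $a_j\in J_{i_j}(L)$ cannot be written as $[Q_j]-[P_{i_j}]$ for a single closed point $Q_j$; you need an arbitrary degree-zero divisor $D_j$ representing $a_j$, and then check independence of the choice. Second, the remark that ``$N_{L/k}$ kills Galois-coboundaries'' is not the relevant mechanism: the Somekawa relations are the projection formula and Weil reciprocity, not Galois cohomology relations, and the well-definedness of $\sigma_I$ through the Weil-reciprocity relation is a genuine computation (it uses that a principal divisor on a curve embedded in $X$ is rationally trivial in $CH_0(X)$, as you indicate, but one must track the symbol-level identity carefully). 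Third, the final identification of the deepest summand with $K(k;J_1,\ldots,J_r)$ is not ``the original Somekawa calculation'' (Somekawa treated only $\mathbb{G}_m$); it is precisely the new content of \cite{Raskind/Spiess2000}, and in their paper it goes through a comparison with Bloch's higher Chow/Milnor $K$-sheaf description rather than a direct Gersten argument. If you intend to reconstruct their proof you should consult \cite[\S2]{Raskind/Spiess2000} directly, since the present paper contributes nothing to this step.
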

Since each curve $C_i$ has a $k$-rational point, we have 
$CH_0(X) \simeq \Z \oplus F^1(X)$. Hence, we have 
\[
	F^1(X) \xrightarrow{\simeq} \bigoplus_{1\leq\nu\leq r}\bigoplus_{1\leq i_1<i_2<\cdots<i_\nu\leq r} K(k;J_{i_1},\ldots, J_{i_\nu}).
\]
Since we have $K(k;J_i) \simeq J_i(k)$ 
and the Albanese map $\mathrm{alb}_X:F^1(X)\to \operatorname{Alb}_X(k) = J_1(k)\oplus \cdots \oplus J_r(k)$ is surjective, 
we obtain 
\begin{equation}
	\label{F^2}
	F^2(X)  \xrightarrow{\simeq} \bigoplus_{2\leq\nu\leq d}\bigoplus_{1\leq i_1<i_2<\cdots<i_\nu\leq d} K(k;J_{i_1},\ldots, J_{i_\nu})
\end{equation}
when $r\ge 2$.
In particular, when $X=C_1\times C_2$, 
we have an isomorphism $F^2(X)\xrightarrow{\simeq} K(k;J_1,J_2)$, which has the following explicit description. Fix base points $x_0\in C_1(k)$, $y_0\in C_2(k)$. Let $x\in C_1(L)$, $y\in C_2(L)$ where $L/k$ is some finite extension. Let $\pi_{L/k}:X_L\rightarrow X$ be the projection. Then \[\pi_{L/k\star}([x,y]-[x,y_0]-[x_0,y]+[x_0,y_0])\mapsto \{x-x_0,y-y_0\}_{L/k}.\]

\smallskip
\section{Preliminary Computations}

\begin{conv}
\label{conv:p-adic} From now on we assume that $k$ is a finite extension of the $p$-adic field $\Q_p$ 
with absolute ramification index $e_k$, where $p$ is an \textbf{odd} prime. 
We will denote by  $\mathcal{O}_k$ the ring of integers of $k$, $\mathfrak{m}_k$ the maximal ideal of $\mathcal{O}_k$ and $\F$ its residue field. 
For a finite extension $k'/k$, we will denote by  $v_{k'}$ the discrete valuation of $k'$ that extends the one of $k$, and by $\F_{k'}$ the residue field of $k'$.	
We also denote by 
	$\mathcal{O}_{k'}^{\times} = U_{k'}^0$ the group of units in $\mathcal{O}_{k'}$ 
and $U_{k'}^i := 1 + \mathfrak{m}_{k'}^i$ for $i\ge 1$ the higher unit groups. 
\end{conv}

In this section we will obtain  some essential information about the Mackey functor $E/p$ (cf.\ Example~\ref{MFexs}, (2))
for an elliptic curve $E$ over $k$, and we will discuss some necessary ramification theory.

\subsection{Formal groups}
\label{subsect:formal groups}
Let $F$ be a formal group law (which is commutative of dimension one) over $\mathcal{O}_k$. 
The group $F(\mathfrak{m}_k)$ associated to $F$ is denoted by $F(k)$. 
The group 
$F(k)$ has a natural filtration given by 
$F^i(k):=F(\mathfrak{m}^i_k)$, for $i\geq 1$. 
Let $\phi:F\to F'$ be an isogeny of formal groups over $\mathcal{O}_k$ which is given by a formal power series $\phi(T) = a_1T + a_2T^2 + \cdots $ in $\mathcal{O}_k[\![T]\!]$. 
We define a filtration on $F'(k)/\phi(F(k))$ by  
\begin{equation}\label{def:fil}
	\mathcal{J}_k^i := \img\left( F'^i(k) \to F'(k)/\phi(F(k))\right) = \frac{F'^i(k)}{\phi(F(k))\cap F'^i(k)}, 
\end{equation}
for $i\geq 1$. 
Recall that the \emph{height} of $\phi$ is defined to be  the  positive integer $n$ such that $\phi(T) \equiv \psi(T^{p^n}) \bmod \mathfrak{m}_k$ 
for some $\psi\in \mathcal{O}_k[\![T]\!]$ 
whose leading coefficient is a unit in $\mathcal{O}_k$.
The structure of these graded pieces are known as follows:  

\begin{prop}[{\cite[Lemma 2.1.4]{kawachi2002}}]
\label{graded quotients}
Assume that the isogeny $\phi: F\to F'$  
which is given by the power series $\phi(T) = a_1T + a_2T^2 + \cdots$
has height 1 and $F[\phi]:=\ker(\phi) \subset F(k)$. 
Putting $t(\phi) =  v_k(a_1)$ and $t_0(\phi)= t(\phi)/(p-1)$, we have 
\begin{enumerate}[label=$(\alph*)$]
\item If $1\leq i< pt_0(\phi)$ and $i$ is coprime to $p$, then 
$\mathcal{J}_k^i/\mathcal{J}_k^{i+1} \simeq F'^i(k)/F'^{i+1}(k) \simeq \F$. 
\item If $1 \leq i< pt_0(\phi)$ and $i $ is divisible by $p$, then 
$\mathcal{J}_k^i/\mathcal{J}_k^{i+1}=0$.

\item If $i=pt_0(\phi)$, then $\mathcal{J}_k^i/\mathcal{J}_k^{i+1}\simeq \Z/p$.
\item If $i>pt_0(\phi)$, then $\mathcal{J}_k^i = 0$. 
\end{enumerate}
\end{prop}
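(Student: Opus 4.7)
The plan is to reduce the computation of $\mathcal{J}_k^i/\mathcal{J}_k^{i+1}$ to an analysis of how $\phi$ acts on the valuation filtration $\{F^j(k)\}$ of $F(k)$. The height-$1$ hypothesis on $\phi$ amounts to saying $\phi(T) \equiv \psi(T^p) \pmod{\mathfrak{m}_k}$ with $\psi$ of unit leading coefficient, which translates to $v_k(a_1)=t(\phi)$, $a_p\in\mathcal{O}_k^{\times}$, and $v_k(a_n)\ge 1$ for every other $n\ge 2$. For $x\in F(k)$ with $v_k(x)=j$, the two candidate leading terms of $\phi(x)$ are $a_1 x$ of valuation $t(\phi)+j$ and $a_p x^p$ of valuation $pj$; these coincide exactly when $j=t_0(\phi)$. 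A short Newton-polygon check rules out smaller valuations from the remaining terms $a_n x^n$, giving $v_k(\phi(x))=pj$ for $j<t_0(\phi)$ and $v_k(\phi(x))=t(\phi)+j$ for $j>t_0(\phi)$.

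Next I would use the canonical identification $F'^i(k)/F'^{i+1}(k)\simeq \mathfrak{m}_k^i/\mathfrak{m}_k^{i+1}\simeq \F$, under which the formal group law becomes ordinary addition and $\phi$ induces an additive map between graded pieces. In the low regime $j<t_0(\phi)$, the induced map $F^j(k)/F^{j+1}(k)\to F'^{pj}(k)/F'^{pj+1}(k)$ is Frobenius-like, $\bar x\mapsto \overline{a_p}\,\bar x^p$, which is bijective since $\F$ is finite. In the high regime $j>t_0(\phi)$, the induced map is scaling by the unit $\overline{a_1/\pi^{t(\phi)}}\in\F^{\times}$ for a uniformizer $\pi$ of $k$, hence also an isomorphism. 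From this, assertions $(a)$, $(b)$, $(d)$ follow by identifying which $j$'s contribute at filtration level $i$: if $1\le i<pt_0(\phi)$ and $\gcd(i,p)=1$ no $j$ does, yielding $\mathcal{J}_k^i/\mathcal{J}_k^{i+1}\simeq \F$; if $p\mid i$ and $i<pt_0(\phi)$, take $j=i/p$ to kill the graded piece; for $i>pt_0(\phi)$, take $j=i-t(\phi)>t_0(\phi)$ and iterate to obtain $\mathcal{J}_k^i=0$ outright.

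The critical case and main obstacle is $(c)$ at $i=pt_0(\phi)$, where the low and high regimes both land at filtration level $i$ with $j=t_0(\phi)$. Here the induced additive map on $F^{t_0(\phi)}(k)/F^{t_0(\phi)+1}(k)\simeq \F$ is of Artin--Schreier type, $\bar x\mapsto \alpha\bar x+\beta\bar x^p$ for some $\alpha,\beta\in\F^{\times}$, and the goal is to show its cokernel has order exactly $p$. I would pin this down by combining the assumption $F[\phi]\subset F(k)$ with height one (so $\#F[\phi]=p$) to force the kernel of this additive polynomial on $\F$ to have cardinality exactly $p$; since $\F$ is finite, the cokernel then also has order $p$. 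Checking that contributions from $j\neq t_0(\phi)$ all fall into $F'^{i+1}(k)$ then finishes the computation $\mathcal{J}_k^{pt_0(\phi)}/\mathcal{J}_k^{pt_0(\phi)+1}\simeq \Z/p$.
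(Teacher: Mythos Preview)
The paper does not give its own proof of this proposition; it is quoted directly from Kawachi \cite[Lemma~2.1.4]{kawachi2002}. Your argument is essentially the standard one and is correct in outline, so there is no genuine disagreement to report. Two small points deserve tightening.

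First, the assertion that ``$v_k(a_n)\ge 1$ for every other $n\ge 2$'' is not what height $1$ gives: it only forces $a_n\in\mathfrak{m}_k$ for $n$ not divisible by $p$, while $a_{2p},a_{3p},\dots$ may well be units. This is harmless for the valuation analysis, but as stated it is false. More importantly, your ``short Newton-polygon check'' is not as immediate as you suggest: from $v_k(a_n)\ge 1$ alone one cannot conclude that $a_nx^n$ is dominated by $a_1x$ and $a_px^p$ for all $j=v_k(x)$. What saves you is that $\phi$ is a \emph{homomorphism} of formal groups, so the induced map on each graded piece $F^{j}(k)/F^{j+1}(k)\simeq\F$ is \emph{additive}; over a field of characteristic $p$ an additive polynomial of degree $\le p$ is forced to be of the form $\alpha X+\beta X^{p}$, which is exactly what you need. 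Alternatively, one can use Weierstrass preparation and the fact that the $p-1$ nonzero torsion points all have valuation $t_0(\phi)$ (hence reduce to the nonzero elements of a one-dimensional $\F_p$-subspace of $\F$, whose intermediate elementary symmetric functions vanish) to see directly that the intermediate coefficients do not contribute. Either justification closes the gap; you should make one of them explicit.

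Second, your treatment of $(c)$ is correct but the crucial step---that the $p-1$ nonzero elements of $F[\phi]$ have \emph{distinct} nonzero images in $F^{t_0(\phi)}(k)/F^{t_0(\phi)+1}(k)$---should be stated: two torsion points with the same image would differ by a torsion point of valuation $>t_0(\phi)$, which is impossible since every nonzero element of $F[\phi]$ has valuation exactly $t_0(\phi)$. With that in hand, the kernel of $\bar x\mapsto\alpha\bar x+\beta\bar x^{p}$ on $\F$ has order at least $p$, hence exactly $p$, and the cokernel computation goes through.
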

The above isomorphisms are induced by the standard isomorphism $F'^i(k)/F'^{i+1}(k)\xrightarrow{\simeq} \F$ as in \cite[IV.2, Proposition 6]{serre1979local} and they depend on the choice of a uniformizer $\pi_k$ of $k$. 

A typical example of a height 1 isogeny 
is the multiplication  $[p]:\widehat{\mathbb{G}}_m \to \widehat{\mathbb{G}}_m$ by $p$
	on the multiplicative group $\widehat{\mathbb{G}}_m$. 
This isogeny is given by the power series $[p](T) = pT+ \cdots $ 
and $\widehat{\mathbb{G}}_m[p] = \mu_p$, the group of $p$-th roots of unity. 
In particular, $t([p]) = e_k$ is 
the absolute ramification index of $k$. 
The filtration $\widehat{\mathbb{G}}_m^i(k) = U_k^i$ defines a filtration on $k^{\times}/p = k^{\times}/(k^{\times})^p$ by  
\[
 \overline{U}_k^i :=  \img\left(U_k^i \to k^{\times }/p\right)\quad \mbox{for $i\ge 0$}.
\]
Applying \autoref{graded quotients} to 
$[p]:\widehat{\mathbb{G}}_m \to \widehat{\mathbb{G}}_m$, 
the structure of the graded quotients $\overline{U}^i_k/\overline{U}_k^{i+1}$ is summarized as follows. 
\begin{lem}\label{units} Assume $\mu_p\subset k$. Set $\displaystyle e_0(k)= e_k/(p-1)$ (which is an integer).  
\begin{enumerate}[label=$(\alph*)$]
\item If $0\leq i< pe_0(k)$ and $i$ is coprime to $p$, then $\overline{U}^i_k/\overline{U}_k^{i+1}\simeq
\F_k$.
\item If $0\leq i< pe_0(k)$ and $i$ is divisible by $p$, then 
$\overline{U}^i_k/\overline{U}_k^{i+1}=1$.
\item If $i=pe_0(k)$, then $\overline{U}^i_k/\overline{U}_k^{i+1}\simeq
\Z/p$.
\item If $i>pe_0(k)$, then $\overline{U}_k^i=1$. 
\end{enumerate}
\end{lem}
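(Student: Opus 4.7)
The plan is to reduce the lemma to a direct application of Proposition \ref{graded quotients} applied to the isogeny $\phi = [p]:\widehat{\mathbb{G}}_m \to \widehat{\mathbb{G}}_m$. The paper has already observed that $[p](T) = pT + \binom{p}{2}T^2 + \cdots + T^p$ has height one, that $\widehat{\mathbb{G}}_m[p] = \mu_p$ (via $\zeta \mapsto \zeta - 1$) lies inside $\widehat{\mathbb{G}}_m(k)$ under the standing hypothesis $\mu_p \subset k^{\times}$, and that $t([p]) = v_k(p) = e_k$, whence $t_0([p]) = e_k/(p-1) = e_0(k)$. Hence Proposition \ref{graded quotients} applies and gives the claimed structure for the graded pieces of the filtration $\mathcal{J}_k^i$ on $U_k^1/[p]\widehat{\mathbb{G}}_m(k) = U_k^1/(U_k^1)^p$.

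The remaining content is to identify $\mathcal{J}_k^i$ with $\overline{U}_k^i$. For $i\geq 1$, the kernel of $U_k^i \to k^{\times}/p$ is $U_k^i \cap (k^{\times})^p$, so I need the equality $U_k^1 \cap (k^{\times})^p = (U_k^1)^p$. Given $y^p \in U_k^1$ with $y\in k^{\times}$, the valuation forces $y \in \mathcal{O}_k^{\times}$; writing $y = [\bar y]\,v$ with $[\bar y]$ the Teichmüller lift and $v \in U_k^1$ yields $y^p = [\bar y^p]\, v^p$, and $y^p \in U_k^1$ forces $\bar y^p = 1$ in $\mathbb{F}_k^{\times}$, hence $y^p = v^p \in (U_k^1)^p$. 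Intersecting with $U_k^i$ gives $U_k^i \cap (k^{\times})^p = (U_k^i)^p$, and the natural surjection $U_k^1/(U_k^1)^p \to \overline{U}_k^1$ identifies $\mathcal{J}_k^i$ with $\overline{U}_k^i$ for all $i\ge 1$.

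For the boundary case $i = 0$ of parts (a)--(b), I must also show $\overline{U}_k^0 = \overline{U}_k^1$, equivalently $\mathcal{O}_k^{\times}/U_k^1(\mathcal{O}_k^{\times})^p = 1$. This quotient is $\mathbb{F}_k^{\times}/(\mathbb{F}_k^{\times})^p$, and since $|\mathbb{F}_k| = p^f$ one has $p \nmid p^f - 1$, so the quotient is trivial. Since $0$ is divisible by $p$, this is consistent with part (b), which asserts $\overline{U}_k^0/\overline{U}_k^1 = 1$.

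Combining these identifications, parts (a)--(d) of Proposition \ref{graded quotients} with $t_0(\phi) = e_0(k)$ translate directly into parts (a)--(d) of the lemma. There is no genuine obstacle: the only subtle point is the clean matching of the filtration $\overline{U}_k^{\bullet}$ on $k^{\times}/p$ with the formal-group filtration $\mathcal{J}_k^{\bullet}$, and this is precisely what the equality $U_k^1 \cap (k^{\times})^p = (U_k^1)^p$ supplies.
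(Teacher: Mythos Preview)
Your approach matches the paper's exactly: the lemma is presented there as an immediate application of Proposition~\ref{graded quotients} to $[p]:\widehat{\mathbb{G}}_m\to\widehat{\mathbb{G}}_m$, and you have supplied the details left implicit (the identification $\mathcal{J}_k^i\cong\overline{U}_k^i$ via $U_k^1\cap(k^\times)^p=(U_k^1)^p$, and the boundary case $i=0$). One small correction: intersecting $U_k^1\cap(k^\times)^p=(U_k^1)^p$ with $U_k^i$ yields only $U_k^i\cap(k^\times)^p=U_k^i\cap(U_k^1)^p$, not $(U_k^i)^p$ as you wrote, but this weaker equality is precisely what identifies $\mathcal{J}_k^i$ with $\overline{U}_k^i$, so your argument is unaffected.
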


For the remaining of this subsection we assume that  $\phi: F\to F'$ is a height $1$ isogeny with $F[\phi]\subset F(k)$. For a point $x\in F'(k)$, let $k':=k(\phi^{-1}(x))$ be the smallest extension of $k$ over which there exists a point $y\in F(k')$ such that $\phi(y)=x$.  
Under our assumptions this is a finite Galois extension of $k$ of degree $p$ (cf.\ \autoref{Kummer extension} below). 
Let $G=\Gal(k'/k)$ be the Galois group of $k'/k$. Consider the ramification filtration 
 $(G^\lambda)_{\lambda \ge -1}$ in the upper numbering of $G$ (\cite[IV.3]{serre1979local}). 
Then there is an integer $s$, called the \textbf{jump} of $G$, such that $G^{\lambda}=G$, for every $\lambda \leq s$ and $G^{\lambda}=1$, for every $\lambda > s$ (\cite[IV.3, Theorem]{serre1979local}). 
The ramification of the extension $k'/k$ is described in the following lemma due to Kawachi. 

\begin{lem}[{\cite[Lemma~2.1.5]{kawachi2002}}]
\label{Kummer extension}
For a height $1$ isogeny $\phi: F\to F'$ with $F[\phi]\subset F(k)$, 
and $x\in \mathcal{J}_k^i \setminus \mathcal{J}_k^{i+1}$, 
the ramification of the extension $k' = k(\phi^{-1}(x))/k$ is known as follows:
\begin{enumerate}[label=$(\alph*)$]
	\item If $1\le i < pt_0(\phi)$, then $k'/k$ is a totally ramified extension of degree $p$  with the jump in the ramification filtration of $G := \Gal(k'/k)$ occurring at $s = pt_0(\phi) - i$.  
	\item If $i = pt_0(\phi)$, then $k'/k$ is an unramified extension of degree $p$.
\end{enumerate}
\end{lem}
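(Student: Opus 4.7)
The strategy is to combine a Newton-polygon analysis of the equation $\phi(T) - x = 0$ with the explicit action of $G := \Gal(k'/k)$ on a chosen root coming from the formal-group law. Because $F[\phi] \subset F(k)$ by hypothesis, any solution $y$ of $\phi(y) = x$ produces all other solutions as $y +_F \zeta$ for $\zeta \in F[\phi]$; hence $k' = k(y)$ is Galois over $k$ of degree dividing $p$, and each $\sigma \in G$ acts by $\sigma(y) = y +_F \zeta_\sigma$ for some $\zeta_\sigma \in F[\phi]$. I would begin by using a coordinate change on the formal group to bring $\phi$ close to the Lubin--Tate shape $\phi(T) = a_1 T + a_p T^p + (\text{higher-order noise divisible by } \pi_k)$, with $v_k(a_1) = t = (p-1) t_0$ and $a_p \in \mathcal{O}_k^{\times}$; this leaves the filtration $\mathcal{J}_k^{\bullet}$ and the extension $k'/k$ unchanged but makes the Newton polygons below transparent.

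For case $(a)$, where $1 \leq i < p t_0$ and necessarily $p \nmid i$ (the only non-empty option in view of \autoref{graded quotients}), the Newton polygon of $\phi(T) - x$ reduces to the single segment joining $(0, i)$ to $(p, 0)$ of slope $-i/p$: the point $(1, t)$ lies strictly above this segment because the segment's $y$-value at $x = 1$ is $i(p-1)/p$, which is less than $t$ thanks to $i < pt/(p-1) = p t_0$. Every root $y$ therefore has $v_k(y) = i/p$, a fractional value, so $[k':k] = p$ and $k'/k$ is totally ramified. To compute the jump, pick a generator $\sigma$ of $G$ and write $\sigma(y) = y +_F \zeta$ with non-zero $\zeta \in F[\phi]$. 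Expanding the formal-group sum gives $\sigma(y) - y = \zeta + \sum_{j, k \geq 1} c_{jk} y^j \zeta^k$; since $v_{k'}(\zeta) = p t_0$ (using $v_k(\zeta) = t_0$ from the Newton polygon of $\phi$ itself and $e(k'/k) = p$), while each remaining term has valuation at least $v_{k'}(y \zeta) = i + p t_0 > p t_0$, we obtain $v_{k'}(\sigma(y) - y) = p t_0$. Next, write $y = u \pi_{k'}^i$ for a uniformiser $\pi_{k'}$ of $k'$ and a unit $u$, and expand $\sigma(\pi_{k'}) = \pi_{k'}(1 + \epsilon)$ with $v_{k'}(\epsilon) = i_G(\sigma) - 1$. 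Because $p \nmid i$, the factor $i$ is a unit in $\F$, so the dominant contribution to $\sigma(y) - y = \pi_{k'}^i\bigl[(\sigma(u) - u) + i \sigma(u) \epsilon + O(\epsilon^2)\bigr]$ is $i \sigma(u) \pi_{k'}^i \epsilon$, giving $v_{k'}(\sigma(y) - y) = i + i_G(\sigma) - 1$. Equating the two expressions yields $i_G(\sigma) = p t_0 - i + 1$, so the unique lower-numbering break of $G$ is $p t_0 - i$; because $G \simeq \Z/p$ has only a single break, the Herbrand function is the identity there and the upper-numbering jump $s$ coincides with $p t_0 - i$.

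For case $(b)$, where $i = p t_0$, the Newton polygon segment from $(0, p t_0)$ to $(p, 0)$ has integer slope $-t_0$; substituting $y = \pi_k^{t_0} z$ into $\phi(y) = \pi_k^{p t_0} x_0$ with $x_0 \in \mathcal{O}_k^{\times}$ and dividing through by $\pi_k^{p t_0}$ produces an integral equation in $z$ whose reduction modulo $\pi_k$ takes the Artin--Schreier form $\overline{a}_p z^p + \overline{u}_1 z - \overline{x}_0 = 0$ over $\F$. The hypothesis $x \in \mathcal{J}_k^{p t_0} \setminus \mathcal{J}_k^{p t_0 + 1}$ is, via the isomorphism in \autoref{graded quotients}$(c)$, precisely the statement that $\overline{x}_0$ represents a non-trivial class in the cokernel of the corresponding Artin--Schreier operator on $\F$; the reduced equation is therefore irreducible, and Hensel's lemma lifts its splitting field to the unramified degree-$p$ extension of $k$, so $k' = k(z) = k(y)$ has the stated structure. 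The main obstacle throughout is the control of the intermediate coefficients $a_2, \ldots, a_{p-1}$ of $\phi(T)$: the height 1 hypothesis only forces them into $\mathfrak{m}_k$, which is insufficient on its own to guarantee either the single-segment Newton polygon in case $(a)$ or the clean Artin--Schreier reduction in case $(b)$. The initial coordinate change on $F$, or alternatively a more careful direct estimate repeatedly invoking $\phi(T) \equiv \psi(T^p) \pmod{\mathfrak{m}_k}$, is precisely what is needed to legitimise these reductions.
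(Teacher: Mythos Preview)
The paper does not prove this lemma; it is quoted from \cite[Lemma~2.1.5]{kawachi2002} and used as a black box, so there is no in-paper argument to compare against. Your direct approach---Newton polygon for the valuation of a root, then computing the break from $v_{k'}(\sigma(y)-y)=v_{k'}(\zeta)=pt_0(\phi)$ via $\sigma(y)=y+_F\zeta$---is the standard route and is correct. The step from $i_G(\sigma)=pt_0(\phi)-i+1$ to the upper-numbering jump $s=pt_0(\phi)-i$ is fine, since for a cyclic group of prime order the unique lower and upper breaks coincide.

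The obstacle you flag about $a_2,\dots,a_{p-1}$ is genuine: with only $v_k(a_j)\ge 1$, the point $(j,v_k(a_j))$ can lie below the segment $(0,i)$--$(p,0)$ once $i>p/(p-j)$, so the Newton-polygon step as written fails. For case~$(a)$ you can bypass the polygon entirely: the $p$ roots of $\phi(T)=x$ form the coset $y+_F F[\phi]$, every nonzero $\zeta\in F[\phi]$ has $v_k(\zeta)=t_0(\phi)$, and Weierstrass preparation forces the sum of root valuations to equal $v_k(x)=i$. If some root had $v_k\ge t_0(\phi)$ the sum would be $\ge pt_0(\phi)>i$; hence all roots share valuation $i/p\notin\Z$ and $k'/k$ is totally ramified of degree $p$, with no appeal to the intermediate $a_j$. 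For case~$(b)$, however, the same trick only yields that every root has valuation exactly $t_0(\phi)$, which does not by itself decide ramification; here your Artin--Schreier reduction really does require controlling the terms $a_j\pi_k^{(j-p)t_0(\phi)}$ for $2\le j\le p-1$, and the preliminary coordinate change (or an equivalent sharpening of $v_k(a_j)$) is unavoidable. An alternative, once \autoref{filtered isom} is in hand, is to transport the whole question to $\G_m$ and read off both cases from \autoref{jump}, though in Kawachi's numbering the present lemma precedes that theorem.
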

Note that \autoref{graded quotients} implies that \autoref{Kummer extension} covers all possible cases. Next we further  assume that $\mu_p \subset k^{\times}$.  
The short exact sequences of $G_k$-modules 
$1\to \mu_p \to k^{\times} \xrightarrow{p} k^{\times}\to 1$ and 
$0\to F[\phi] \to F \xrightarrow{\phi} F' \to 0$ give rise to connecting homomorphisms (Kummer maps, cf.\ \eqref{Kummer map})
\[
k^{\times}/p \xrightarrow{\simeq} H^1(k,\mu_p),\quad \mbox{and} \quad F'(k)/\phi(F(k)) \hookrightarrow H^1(k,F[\phi]).
\]
After fixing an isomorphism $F[\phi]\simeq \mu_p$ of 
(trivial) $G_k$-modules, we get noncanonical homomorphisms 
\[
f :F'(k)/\phi(F(k)) \hookrightarrow H^1(k,F[\phi]) \simeq H^1(k,\mu_p) \xleftarrow{\simeq} k^{\times}/p.
\] 
In fact, the induced map is compatible with filtrations and 
the following theorem holds: 

\begin{theo}[{\cite[Theorem 2.1.6]{kawachi2002}}]
\label{filtered isom}
	The map $f:F'(k)/\phi(F(k)) \to k^{\times}/p$ satisfies 
	\[
	f(\mathcal{J}_k^i) = \overline{U}_k^{p(e_0(k)-t_0(\phi)) + i}
	\]
	for any $i\ge 1$, 
	where $e_0(k) = e_k/(p-1)$ and $t_0(\phi)$ is defined in Proposition \ref{graded quotients}.
\end{theo}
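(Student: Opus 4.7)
The plan is to match ramification data on both sides of $f$. By construction, $f$ identifies the Kummer class $\delta_F(x) \in H^1(k,F[\phi])$ of $x\in F'(k)/\phi(F(k))$ with the Kummer class $\delta_{\widehat{\mathbb{G}}_m}(f(x)) \in H^1(k,\mu_p)$ of $f(x) \in k^\times/p$, via the chosen isomorphism $F[\phi] \simeq \mu_p$. The key consequence I would extract is that the two cyclic degree-$p$ Kummer extensions that these classes cut out coincide: $k(\phi^{-1}(x)) = k(\sqrt[p]{f(x)})$ inside $\overline{k}$, and therefore have identical ramification data.

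Next I would apply \autoref{Kummer extension} to both isogenies $\phi$ and $[p]\colon \widehat{\mathbb{G}}_m \to \widehat{\mathbb{G}}_m$, noting that $t_0([p]) = e_0(k)$. Concretely, for $x\in \mathcal{J}_k^i \setminus \mathcal{J}_k^{i+1}$ with $1\le i < pt_0(\phi)$, the extension $k(\phi^{-1}(x))/k$ is totally ramified of degree $p$ with upper-numbering jump $s = pt_0(\phi) - i$; writing $f(x)\in \overline{U}_k^j\setminus \overline{U}_k^{j+1}$, the same lemma gives jump $s = pe_0(k) - j$ for $k(\sqrt[p]{f(x)})/k$. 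Equating these yields $j = p(e_0(k) - t_0(\phi)) + i$. The unramified case $i = pt_0(\phi)$ matches the unramified case $j = pe_0(k)$ by the same argument, so $f$ is filtered, carrying each piece $\mathcal{J}_k^i$ into $\overline{U}_k^{p(e_0(k)-t_0(\phi))+i}$ and each graded quotient into the corresponding one.

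To promote this inclusion of graded pieces to an equality, I would observe that the shift $j - i = p(e_0(k) - t_0(\phi))$ is divisible by $p$, so the index conditions of \autoref{graded quotients} and \autoref{units} (coprime to $p$ versus divisible by $p$, and position relative to $pt_0(\phi)$ versus $pe_0(k)$) are preserved under $i \mapsto j$. Hence the nonzero graded pieces match term by term, each isomorphic to $\mathbb{F}_k$ or to $\mathbb{Z}/p$. Since $f$ is injective (being the Kummer map composed with the isomorphism $H^1(k,F[\phi]) \simeq H^1(k,\mu_p) \simeq k^\times/p$), the induced map on each of these finite graded pieces is an isomorphism, and a descending induction on $i$ (using that the filtrations terminate at $pt_0(\phi)$ and $pe_0(k)$ respectively, by \autoref{graded quotients}(d) and \autoref{units}(d)) yields the desired equality $f(\mathcal{J}_k^i) = \overline{U}_k^{p(e_0(k)-t_0(\phi))+i}$ for all $i \ge 1$.

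The main obstacle will be the first paragraph: one must carefully verify that the cohomological identification $\delta_F(x) \leftrightarrow \delta_{\widehat{\mathbb{G}}_m}(f(x))$ is not only an equality inside $H^1(k,\mu_p)$ but actually cuts out the same subfield of $\overline{k}$, so that the ramification jumps of the two degree-$p$ extensions may be legitimately compared. This step is essentially tautological from the definition of $f$, but it is where all the genuine content of the theorem is concentrated; afterwards everything reduces to ramification bookkeeping via Kawachi's lemma on Kummer extensions of height-$1$ isogenies.
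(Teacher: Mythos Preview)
The paper does not supply its own proof of this statement: it is quoted verbatim from Kawachi with the citation \cite[Theorem~2.1.6]{kawachi2002} and no argument is given. So there is no ``paper's own proof'' against which to compare.

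Your proposed argument is correct and is the natural one. Two small points are worth tightening. First, when you write $f(x)\in\overline{U}_k^{\,j}\setminus\overline{U}_k^{\,j+1}$ you are tacitly assuming that $f(x)$ lands in $\overline{U}_k^{\,0}$ at all, whereas a priori $f$ only takes values in $k^\times/p$. This follows from your own ramification computation: for $x\in\mathcal{J}_k^{\,i}\setminus\mathcal{J}_k^{\,i+1}$ with $i<pt_0(\phi)$ the jump $s=pt_0(\phi)-i$ is strictly less than $pe_0(k)$, whereas an element of $k^\times/p$ lying outside $\overline{U}_k^{\,0}$ (i.e.\ with valuation not divisible by $p$) generates a totally ramified extension with jump exactly $pe_0(k)$; so that case is excluded and $f(x)\in\overline{U}_k^{\,0}=\overline{U}_k^{\,1}$. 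Second, your closing remark slightly misplaces the difficulty: the equality $k(\phi^{-1}(x))=k\bigl(\sqrt[p]{f(x)}\bigr)$ really is tautological---both are the fixed field of the kernel of one and the same homomorphism $G_k\to\Z/p$, read through two identifications of the target group---so you should not anticipate any obstacle there. Once these are noted, the comparison of jumps, the matching of graded pieces via \autoref{graded quotients} and \autoref{units}, and the descending induction all go through exactly as you describe.
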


We close this subsection by defining a Mackey functor by the higher unit groups $\overline{U}_k^i$. 

\begin{defn}\label{unitsMF} Let $i\geq 0$. We define the sub-Mackey functor $\overline{U}^i$, of $\G_m/p$ as follows. If $L$ is a finite extension of $k$, then 
$\overline{U}^i(L):=\overline{U}_L^{ie(L/k)}.$ For a finite extension $F/L$, the norm $N_{F/L}$ and restriction maps $\res_{F/L}$ are induced by the ones on $\G_m$. 
\end{defn}

\subsection{Elliptic Curves}
 Let $E$ be an elliptic curve over $k$ with good reduction. 
 Let $\mathcal{E}$ be the N\'{e}ron model of $E$, which is an abelian scheme over $\Spec(\mathcal{O}_k)$. 
Let $\overline{E}:=\mathcal{E}\otimes_{\mathcal{O}_k}\F$ be the reduction of $E$, and $\widehat{E}$ be the formal group  of $E$. 
 We have a short exact sequence of abelian groups,
\begin{eqnarray}\label{ses-1}0\rightarrow \hat{E}(\mathfrak{m}_k)\stackrel{j}{\longrightarrow}E(k)\stackrel{r}{\longrightarrow}\overline{E}(\F)\rightarrow 0,\end{eqnarray}
where 
$r:E(k) \to \overline{E}(\F)$ is the reduction map. 
This induces a \textbf{short exact sequence of Mackey functors}, 
\begin{eqnarray}\label{ses-2}0\rightarrow 
		\widehat{E}\rightarrow E\rightarrow [E/\widehat{E}]\rightarrow 0,\end{eqnarray} 		where $\widehat{E}$ is the Mackey functor given by $K\mapsto \widehat{E}(K):=\widehat{E}(\mathfrak{m}_K)$, and $[E/\widehat{E}]$ is a Mackey functor defined by $L\mapsto \overline{E}(\F_L)$. For a finite extension $L/K$, the restriction $\res_{L/K}:[E/\hat{E}](K)\rightarrow [E/\hat{E}](L)$ is the usual restriction, $\overline{E}(\F_K)\xrightarrow{\res_{L/K}} \overline{E}(\F_L)$, while the norm $N_{L/K}:[E/\hat{E}](L)\rightarrow [E/\hat{E}](K)$ is the map $\overline{E}(\F_L)\xrightarrow{e(L/K)\cdot N_{L/K}} \overline{E}(\F_K)$. The fact that $[E/\hat{E}]$ is a Mackey functor has been shown by Raskind and Spiess (\cite[p.~15]{Raskind/Spiess2000}).

		
From now on  we will drop the notation $[p]:\widehat{E}(k)\rightarrow\widehat{E}(k)$ and will denote it simply by $p:\widehat{E}(k)\rightarrow\widehat{E}(k)$. Recalling from Section \ref{subsect:formal groups}, 
the group 
$\widehat{E}(k)=\widehat{E}(\mathfrak{m}_k)$ has a natural filtration given by $\widehat{E}^i(k):=\widehat{E}(\mathfrak{m}^i_k)$, for $i\geq 1$. 
\begin{defn}\label{formalfil}  Following \eqref{def:fil}, we define a filtration on the group $\widehat{E}(k)/p = \widehat{E}(k)/p\widehat{E}(k)$  by \[\mathcal{D}_k^i := \displaystyle\frac{\widehat{E}^i(k)}{p\widehat{E}(k)\cap \widehat{E}^i(k)},\;\;i\geq 1.\] 
\end{defn}

 In the case when $\widehat{E}[p]\subset E(k)$, we can decompose $E/p$ using the Mackey functors $\G_m/p$ and $\overline{U}^i$ (cf.\ Definition~\ref{unitsMF}). This decomposition depends on the reduction type of $E$, which we review in the following two subsections.

\subsection{Elliptic curves with good ordinary reduction}\label{ordinarysection} We first consider the case when $E$ has \textbf{good ordinary reduction}; that is, $\overline{E}$ is an ordinary elliptic curve over $\F$.  
In this case the $G_k$-module $E[p]$ has a one-dimensional $G_k$-invariant submodule. Namely, we have the {connected-\'{e}tale} short exact sequence of $G_k$-modules, 
\begin{equation}\label{ses1}
0\rightarrow 
		E[p]^\circ\rightarrow E[p]\rightarrow E[p]^{et}\rightarrow 0,
\end{equation}
where $E[p]^\circ:=\hat{E}[p]$ are the $[p]$-torsion points of the formal group $\hat{E}$ of $E$.  For more details on the connected-\'{e}tale exact sequence we refer to \cite[Section 8]{Stix2009}. After a finite unramified extension $k'/k$ the short exact sequence \eqref{ses1} becomes,
\begin{eqnarray}\label{ses2}&&0\rightarrow\mu_{p}\rightarrow E[p]\rightarrow\Z/p\rightarrow 0.\end{eqnarray}  This follows because after base change to the maximal unramified extension $k^{nr}$ of $k$ the formal group $\widehat{E}^{nr}$ becomes isomorphic to the multiplicative group $\widehat{\G}_m$  (cf.\ \cite[Lemma 4.27]{Mazur1972}). 
As the multiplication $p:\widehat{E} \to \widehat{E}$ has height $1$, Theorem \ref{filtered isom}  now leads to the following proposition.

\begin{prop}[{\cite[Theorem 2.1.6]{kawachi2002}, see also\ \cite[Proposition 3.12]{Gazaki/Leal2018}}]\label{formalgroup}   Let $E$ be an elliptic curve over $k$ with good ordinary reduction, and $\widehat{E}$ be its formal group. Assume that $\widehat{E}[p]\subset\widehat{E}(k)$  and $\mu_p\subset k^\times$. Then, we have an isomorphism  $f:\widehat{E}(k)/p\xrightarrow{\simeq} \overline{U}_k^1\simeq \overline{U}_k^0$, which  
		 satisfies $f(\mathcal{D}_k^i) = \overline{U}_k^i$ for $i \ge 1$.
\end{prop}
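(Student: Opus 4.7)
The plan is to specialize \autoref{filtered isom} to the multiplication-by-$p$ isogeny $\phi=[p]\colon \widehat{E}\to\widehat{E}$. First I would check its hypotheses: the isogeny $[p]$ on the formal group has the same height as $\widehat{E}$ itself, which equals $1$ precisely under the ordinary-reduction assumption (height $2$ being the supersingular case), and the condition $\widehat{E}[p]\subset\widehat{E}(k)$ required by \autoref{filtered isom} is exactly the hypothesis of the proposition. Moreover, after an unramified base change $\widehat{E}$ becomes isomorphic to $\widehat{\mathbb{G}}_m$, so $\widehat{E}[p]\simeq\mu_p$; combined with the $k$-rationality of both $\widehat{E}[p]$ and $\mu_p$, this is an isomorphism of trivial $G_k$-modules. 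Fixing it produces precisely the (noncanonical) map
\[
f\colon \widehat{E}(k)/p \hookrightarrow H^1(k,\widehat{E}[p]) \xrightarrow{\simeq} H^1(k,\mu_p) \xleftarrow{\simeq} k^\times/p
\]
used in \autoref{filtered isom}, and injectivity of $f$ is automatic from the Kummer sequence.

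Next I would compute the two constants attached to $\phi=[p]$. Writing $[p](T)=pT+\cdots\in\mathcal{O}_k[\![T]\!]$ gives $t([p])=v_k(p)=e_k$ and hence $t_0([p])=e_k/(p-1)=e_0(k)$. Substituting into the formula $f(\mathcal{J}_k^i)=\overline{U}_k^{p(e_0(k)-t_0(\phi))+i}$ of \autoref{filtered isom}, the shift $p(e_0(k)-t_0([p]))$ vanishes, so the theorem specializes to $f(\mathcal{J}_k^i)=\overline{U}_k^i$ for every $i\geq 1$. In the notation of the present section, $\mathcal{J}_k^i$ is exactly $\mathcal{D}_k^i$ of Definition~\ref{formalfil}, giving the claimed filtration compatibility.

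To conclude, I would extract the global isomorphism from the case $i=1$: since $\widehat{E}(k)=\widehat{E}^1(k)$, the group $\mathcal{D}_k^1$ equals all of $\widehat{E}(k)/p$, so the image of $f$ is exactly $\overline{U}_k^1$, and combined with injectivity this yields $\widehat{E}(k)/p \xrightarrow{\simeq} \overline{U}_k^1$. The secondary identification $\overline{U}_k^1\simeq\overline{U}_k^0$ follows because the quotient $\overline{U}_k^0/\overline{U}_k^1$ is a quotient of $\mathbb{F}_k^\times$, whose order is coprime to $p$, so it maps trivially to $k^\times/p$. There is no serious obstacle in this argument; the key (and really the only) substantive observation is the numerical coincidence $t_0([p])=e_0(k)$, which makes the shift in \autoref{filtered isom} disappear and so aligns the formal-group filtration modulo $p$ with the standard unit filtration modulo $p$.
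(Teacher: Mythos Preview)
Your proposal is correct and is exactly the argument the paper intends: the proposition is stated as an immediate consequence of \autoref{filtered isom} applied to the height-$1$ isogeny $[p]\colon\widehat{E}\to\widehat{E}$, and your computation $t_0([p])=e_k/(p-1)=e_0(k)$ making the shift vanish is the one-line justification behind the paper's sentence ``As the multiplication $p:\widehat{E}\to\widehat{E}$ has height $1$, Theorem~\ref{filtered isom} now leads to the following proposition.'' The additional remarks you supply (injectivity from the Kummer sequence, and $\overline{U}_k^0=\overline{U}_k^1$ because $\F_k^\times$ has order prime to $p$) are standard and fill in details the paper leaves implicit.
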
 
Assume we are in the situation of \autoref{formalgroup}. In particular, $\mu_p\subset k^\times$ and after a finite unramified extension \eqref{ses2} holds. 
Next we consider the short exact sequence \eqref{ses-1} and apply the right exact functor $\otimes_\Z\Z/p$. Using \autoref{formalgroup} we get an exact sequence of $\F_p$-vector spaces,
\[\overline{U}^0_k\stackrel{j}{\longrightarrow} E(k)/p\stackrel{r}{\longrightarrow} \overline{E}(\F)/p\rightarrow 0.\]
The map $j$ is not always injective. Namely, there is a unit $u\in \mathcal{O}_k^\times/\mathcal{O}_k^{\times p}$, known as the \textit{Serre-Tate} parameter of $E$, that generates the kernel. For more details see \cite[Section 3.1]{Gazaki/Leal2018}. All we need in this article is that there is an exact sequence  of Mackey functors,
\begin{eqnarray}\label{ses4}\overline{U}^0\rightarrow E/p\rightarrow [E/\widehat{E}]/p\rightarrow 0,\end{eqnarray} where $\overline{U}^0, [E/\widehat{E}]$ are the Mackey functors defined previously.   
%
%

\subsection{Elliptic curves with good supersingular reduction}\label{sssetup} Lastly, we  consider the case when $E$ has \textbf{good supersingular reduction}. 
This means that the elliptic curve $\overline{E}$ has no $p$-torsion, and hence we get a surjection of Mackey functors,
\begin{eqnarray}\label{ssing0}
&&\widehat{E}/p\xrightarrow{j} E/p\rightarrow 0. 
\end{eqnarray}

For the rest of this subsection we assume that $E[p]\subset E(k)$. Let $e_0(k)=\dfrac{e_k}{p-1}$.  
 Suppose $y_0, y_1$ are two generators of $\widehat{E}[p]\simeq\Z/p \oplus\Z/p$ such that \[v_k(y_0)=\max\{v_k(y):y\in\widehat{E}[p], y\neq 0\}.\] From now on we will denote by $t_0(k):=v_k(y_0)$. We have a decomposition of Mackey functors (see \cite[Proof of Theorem 4.1]{Hiranouchi2014}),
\begin{eqnarray}\label{MF}E/p\simeq\overline{U}^{p(e_0(k)-t_0(k))}\oplus\overline{U}^{pt_0(k)},\end{eqnarray} where $\overline{U}^{p(e_0(k)-t_0(k))}$, $\overline{U}^{pt_0(k)}$ are Mackey functors defined as in Definition~\ref{unitsMF}. We recall how this decomposition is obtained. The main reference for what follows is \cite{kawachi2002}.  Consider the isogenous elliptic curve $E':=E/\langle y_0\rangle$ and the isogeny 
$E\xrightarrow{\phi} E'$, and its dual, $E'\xrightarrow{\check{\phi}} E$, which are both defined over $k$. 
As $[p] = \check{\phi} \circ \phi:E\to E$ 
has height $2$, 
these isogenies correspond to isogenies of formal groups,  
$\widehat{E}\xrightarrow{\phi} \widehat{E}'$, $\widehat{E}'\xrightarrow{\check{\phi}} \widehat{E}$, which are both of height $1$. We have a splitting short exact sequence of $\F_p$-vector spaces, 
\begin{eqnarray}\label{SES}&&0\rightarrow\frac{E'(k)}{\phi(E(k))}\xrightarrow{\check{\phi}}\frac{E(k)}{p}\xrightarrow{\varepsilon}\frac{E(k)}{\check{\phi}(E'(k))}\rightarrow 0,\end{eqnarray} where $\varepsilon$ is just the projection. 
By \cite[Corollary 2.3]{Hiranouchi/Hirayama2013}, 
we have $t_0(\phi) = t_0(k)$.  
From \cite[Theorem 3.2.6 (2)]{kawachi2002}, we have isomorphisms 
\[\frac{E'(k)}{\phi(E(k))} \simeq \frac{\widehat{E}'(k)}{\phi(\widehat{E}(k)))} \underset{f}{\xrightarrow{\simeq}} \overline{U}_k^{p(e_0(k)-t_0(k))} \simeq \overline{U}_k^{p(e_0(k)-t_0(k))+1},\] where 
the middle isomorphism $f$ follows from \autoref{filtered isom}, and the last one follows from \autoref{units},~$(b)$.  
 It is clear that the dual isogeny $\check{\phi}$ identifies $E$ with the quotient $E'/\langle\phi(y_1)\rangle$. 
 By \cite[Corollary 2.3]{Hiranouchi/Hirayama2013} again, 
	we have $t_0(\check{\phi}) =v_k(\phi(y_1))$. 
	The leading coefficient of 
	$[p] = \check{\phi}\circ \phi$ equals to $p$ so that 
	$t(\phi) + t(\check{\phi}) = e_k$ and hence 
	$t_0(k) + t_0(\check{\phi} ) = e_0(k)$. 
	Then the same theorem \cite[Theorem 3.2.6 (2)]{kawachi2002} gives an isomorphism, 
 \[\frac{E(k)}{\check{\phi}(E'(k))} \simeq \frac{\widehat{E}(k)}{\check{\phi}(\widehat{E}'(k))}\underset{f}{\xrightarrow{\simeq}}\overline{U}_k^{p(e_0(k)-t_0(\check{\phi}))+1} = \overline{U}_k^{pt_0(k)+1}.\]

 \subsection*{Behavior with respect to the filtration of the formal group} 
\begin{notn} Consider the filtration $\mathcal{D}_k^i =\displaystyle\frac{\widehat{E}^i(k)}{p\widehat{E}(k)\cap \widehat{E}^i(k)}$, $i\geq 1$ (cf.\ Definition~\ref{formalfil}). We define additionally $\mathcal{F}_k^i:=\displaystyle\frac{\widehat{E}^i(k)}{\check{\phi}(\widehat{E}'(k))\cap \widehat{E}^i(k)}$, and $\mathcal{G}_k^i:=\displaystyle\frac{\widehat{E}^{'i}(k)}{\phi(\widehat{E}(k))\cap \widehat{E}^{'i}(k)}$, for $i\geq 1$.  
\end{notn} 
Note that since the isogenies $\phi, \check{\phi}$ are of height $1$, \autoref{graded quotients} applies for the quotients $\mathcal{F}^i_k/\mathcal{F}^{i+1}_k$, $\mathcal{G}^i_k/ \mathcal{G}^{i+1}_k$.

\begin{rem}\label{filtration1} It is clear that the map $\varepsilon:\mathcal{D}^1_k\rightarrow\mathcal{F}^1_k$ preserves the filtration.  The following lemma gives a complete description of the quotients $\mathcal{D}^i_k/\mathcal{D}^{i+1}_k$ for $i\geq 1$. 
\end{rem}

\begin{lem}\label{filtration2} Let $i\geq 1$ be an integer. 
Suppose that we have $e_0(k) - t_0(k) = pt_0(k)$. 
Then the following are true for the quotient $\mathcal{D}^i_k/\mathcal{D}^{i+1}_k$.
\begin{enumerate}[label=$(\alph*)$]
\item If $i<p(e_0(k)-t_0(k))$ and $(i,p)=1$, then $\varepsilon$ induces an isomorphism, \[\varepsilon:\mathcal{D}^i_k/\mathcal{D}^{i+1}_k\xrightarrow{\simeq}\mathcal{F}^i_k/\mathcal{F}^{i+1}_k\simeq\F.\] Moreover, both quotients are isomorphic to $\widehat{E}^i(k)/\widehat{E}^{i+1}(k)$. 
\item 
If $i<p(e_0(k)-t_0(k))$ and $i=pj$, for some $j\geq 1$, then $\check{\phi}$ induces an isomorphism, \[\check{\phi}:\mathcal{G}^j_k/\mathcal{G}^{j+1}_k\xrightarrow{\simeq}\mathcal{D}^i_k/\mathcal{D}^{i+1}_k.\] Moreover, we have an isomorphism $\F \simeq \widehat{E}^{'j}(k)/\widehat{E}^{'j+1}(k) \simeq \mathcal{D}^i_k/\mathcal{D}^{i+1}_k$ 
if $(j,p) = 1$. 
\item If $i=p(e_0(k)-t_0(k))$, then we have a short exact sequence of $\F_p$-vector spaces, \[0\rightarrow \frac{\mathcal{G}_k^{e_0(k)-t_0(k)}}{\mathcal{G}_k^{e_0(k)-t_0(k)+1}}\xrightarrow{\check{\phi}}\frac{\mathcal{D}_k^{p(e_0(k)-t_0(k))}}{\mathcal{D}_k^{p(e_0(k)-t_0(k))+1}}\xrightarrow{\varepsilon}\frac{\mathcal{F}_k^{p(e_0(k)-t_0(k))}}{\mathcal{F}_k^{p(e_0(k)-t_0(k))+1}}\rightarrow 0.\] 
Moreover, we have isomorphisms, $\displaystyle\frac{\mathcal{G}_k^{e_0(k)-t_0(k)}}{\mathcal{G}_k^{e_0(k)-t_0(k)+1}}\simeq\frac{\mathcal{F}_k^{p(e_0(k)-t_0(k))}}{\mathcal{F}_k^{p(e_0(k)-t_0(k))+1}}\simeq\Z/p$.
\item If $i>p(e_0(k)-t_0(k))$, then $\mathcal{D}_k^i=0$.
\end{enumerate}
\end{lem}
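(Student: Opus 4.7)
The plan is to promote the splitting short exact sequence \eqref{SES} into an exact sequence of filtered $\F_p$-vector spaces and then read off the four cases by inspecting graded pieces, using \autoref{graded quotients} applied to the two height $1$ isogenies $\phi:\widehat{E}\to\widehat{E}'$ and $\check{\phi}:\widehat{E}'\to\widehat{E}$. The basic input is a valuation analysis for $\check{\phi}$: writing $\check{\phi}(T) = bT + \cdots$ with $v_k(b) = t(\check{\phi}) = (p-1)(e_0(k)-t_0(k))$ and using the height $1$ condition $\check{\phi}(T)\equiv \psi(T^p)\pmod{\mathfrak{m}_k}$ with $\psi$ of unit leading coefficient, a direct comparison of leading terms shows that for $z\in\widehat{E}^{'j}(k)$ with $1\le j < t_0(\check{\phi}) = e_0(k)-t_0(k)$ one has $v_k(\check{\phi}(z)) = pj$ and in particular $\check{\phi}(z)\in\widehat{E}^{pj}(k)$. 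Consequently $\check{\phi}$ sends $\mathcal{G}_k^j$ into $\mathcal{D}_k^{pj}$, while $\varepsilon$ sends $\mathcal{D}_k^i$ into $\mathcal{F}_k^i$, so \eqref{SES} refines to a short exact sequence of filtered $\F_p$-vector spaces.

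For part $(a)$, the condition $(i,p) = 1$ forces the $\check{\phi}$-contribution to $\mathcal{D}_k^i/\mathcal{D}_k^{i+1}$ to be trivial (since that contribution is supported in valuations divisible by $p$ in this range), so $\varepsilon$ induces an isomorphism $\mathcal{D}_k^i/\mathcal{D}_k^{i+1}\xrightarrow{\simeq}\mathcal{F}_k^i/\mathcal{F}_k^{i+1}$, and by \autoref{graded quotients}$(a)$ applied to $\check{\phi}$ this target is canonically $\widehat{E}^i(k)/\widehat{E}^{i+1}(k)\simeq \F$. For part $(b)$, \autoref{graded quotients}$(b)$ applied to $\check{\phi}$ gives $\mathcal{F}_k^i/\mathcal{F}_k^{i+1}=0$, so $\check{\phi}$ surjects $\mathcal{G}_k^j/\mathcal{G}_k^{j+1}$ onto $\mathcal{D}_k^i/\mathcal{D}_k^{i+1}$; injectivity follows from the strict identity $v_k(\check{\phi}(z)) = pj$ on this range together with the fact that only valuations divisible by $p$ can appear as images. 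When in addition $(j,p) = 1$, \autoref{graded quotients}$(a)$ applied to $\phi$ identifies $\mathcal{G}_k^j/\mathcal{G}_k^{j+1}\simeq \F$.

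The hypothesis $e_0(k) - t_0(k) = pt_0(k)$ is used decisively for part $(c)$: it aligns the top surviving level $j = pt_0(\phi) = pt_0(k)$ of the $\mathcal{G}$-filtration with the top surviving level $i = pt_0(\check{\phi}) = p(e_0(k)-t_0(k))$ of the $\mathcal{F}$-filtration under the shift $j\mapsto pj$. \autoref{graded quotients}$(c)$ applied once to $\phi$ and once to $\check{\phi}$ produces the two outer $\Z/p$'s, and the SES of $(c)$ is then simply the degree $i = p(e_0(k)-t_0(k))$ piece of the filtered SES. For part $(d)$, \autoref{graded quotients}$(d)$ applied to $\check{\phi}$ gives $\mathcal{F}_k^i = 0$ for $i > p(e_0(k)-t_0(k))$, so any $x\in\widehat{E}^i(k)$ can be written as $\check{\phi}(z)$ for some $z\in\widehat{E}'(k)$; after adjusting $z$ by the kernel $\langle\phi(y_1)\rangle$ of $\check{\phi}$, the required valuation of $\check{\phi}(z)$ forces $v_k(z) > e_0(k)-t_0(k) = pt_0(\phi)$, which by \autoref{graded quotients}$(d)$ applied to $\phi$ places $z$ in $\phi(\widehat{E}(k))$, whence $\check{\phi}(z)\in(\check{\phi}\circ\phi)(\widehat{E}(k)) = p\widehat{E}(k)$ and $\mathcal{D}_k^i = 0$.

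The main obstacle is the valuation bookkeeping needed to justify the filtered refinement of \eqref{SES}, particularly around the critical level $i = p(e_0(k)-t_0(k))$, where the clean identity $v_k(\check{\phi}(z)) = pj$ breaks down and several contributions compete. The hypothesis $e_0(k)-t_0(k) = pt_0(k)$ is precisely what forces the two auxiliary filtrations to match under the index shift $j\mapsto pj$, so that the SES of part $(c)$ emerges cleanly with no spurious terms and so that the kernel-adjustment argument for part $(d)$ goes through.
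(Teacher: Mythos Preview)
Your approach is essentially the same as the paper's: both arguments promote the short exact sequence \eqref{SES} to a filtered one via the valuation behavior of $\check{\phi}$ and then read off each part from \autoref{graded quotients} applied to the two height~$1$ isogenies $\phi$ and $\check{\phi}$. The only cosmetic difference is that for part $(a)$ the paper uses a sandwich argument (surjections $\widehat{E}^i(k)/\widehat{E}^{i+1}(k)\twoheadrightarrow\mathcal{D}_k^i/\mathcal{D}_k^{i+1}\twoheadrightarrow\mathcal{F}_k^i/\mathcal{F}_k^{i+1}$ with both ends isomorphic to $\F$) rather than arguing directly that the $\check{\phi}$-contribution vanishes, and for part $(b)$ the paper simply cites \cite[Lemma~2.1.4~(1)]{kawachi2002} for the isomorphism where you supply the valuation details yourself.
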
 
\begin{proof}
All the parts will follow from \autoref{graded quotients} and \cite[
Lemma 2.6]{Hiranouchi/Hirayama2013}. 
For the first claim \autoref{graded quotients} ($a$) gives an isomorphism, $\mathcal{F}^i_k/\mathcal{F}^{i+1}_k\simeq\F$. At the same time, we have a surjective map of finite dimensional $\F_p$-vector spaces, $\mathcal{D}^i_k/\mathcal{D}^{i+1}_k\xrightarrow{\varepsilon}\mathcal{F}^i_k/\mathcal{F}^{i+1}_k\rightarrow 0$. Moreover, the projection $\widehat{E}^i(k)\rightarrow\mathcal{D}^i_k/\mathcal{D}^{i+1}_k$ induces a surjection $\widehat{E}^i(k)/\widehat{E}^{i+1}(k)\rightarrow\mathcal{D}^i_k/\mathcal{D}^{i+1}_k\rightarrow 0$. Since the quotient $\widehat{E}^i(k)/\widehat{E}^{i+1}(k)$ is isomorphic to $\F$ (\cite[Chapter IV, Proposition 3.2]{Silverman2009}), the claim follows. 

For the second claim, 
the proof of  \cite[Lemma 2.1.4, (1)]{kawachi2002} shows an isomorphism $\mathcal{G}^j_k/\mathcal{G}^{j+1}_k\xrightarrow{\simeq}\mathcal{D}^i_k/\mathcal{D}^{i+1}_k$ induced by $\check{\phi}$. 
If $p \nmid j$, then 
$\F\simeq \widehat{E}'^{j}(k)/\widehat{E}'^{j+1}(k) \simeq \mathcal{G}^j_k/\mathcal{G}^{j+1}_k$ by \autoref{graded quotients} $(a)$. 

Next, suppose $i=p(e_0(k)-t_0(k))$. The isomorphisms $\displaystyle\frac{\mathcal{G}_k^{e_0(k)-t_0(k)}}{\mathcal{G}_k^{e_0(k)-t_0(k)+1}}\simeq\frac{\mathcal{F}_k^{p(e_0(k)-t_0(k))}}{\mathcal{F}_k^{p(e_0(k)-t_0(k))+1}}\simeq\Z/p$ follow by  \autoref{graded quotients} ($c$). The short exact sequence follows easily by the short exact sequence $0\rightarrow\mathcal{G}_k^1\xrightarrow{\check{\phi}}\mathcal{D}_k^1\xrightarrow{\varepsilon}\mathcal{F}_k^1\rightarrow 0$, after restricting to $\mathcal{D}_k^{p(e_0(k)-t_0(k))}$.  

Finally, the claim $(d)$ follows by \autoref{graded quotients} ($d$). 

\end{proof}

\subsection{Computing ramification jumps}\label{unramified section}

In the remaining two subsections we focus on the following special case. We consider an elliptic curve $E$ with \textbf{good supersingular} reduction over a finite  \textbf{unramified} extension $k/\mathbb{Q}_p$, that is, $e_k = 1$. 
In this situation, 
for the mod $p$ Galois representation 
\[\rho: \Gal(\overline{k}/k) \to \operatorname{Aut}(E[p])\simeq GL_2(\F_p),\] 
the image of the inertia subgroup by $\rho$ is known to be cyclic of order $p^2-1$ 
(\cite[Proposition~12]{Serre1972}). 
This implies that 
\begin{itemize}
	\item the extension $L := k(E[p])/k$ corresponding to the kernel of $\rho$ has ramification index $e_{L/k} = p^2-1$, and 
	\item the inertia subgroup of $\Gal(L/k)$ is cyclic.
\end{itemize}

\begin{lem}\label{valuation}
For every non-zero $x\in \widehat{E}[p]$, we have $v_L(x) = 1$.
\end{lem}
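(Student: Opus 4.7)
The plan is to realize the non-zero elements of $\widehat{E}[p]$ as roots of an explicit polynomial and read off their $v_L$-valuations from a Newton polygon. Since $E$ has supersingular reduction, $\overline{E}[p]=0$, so $E[p]=\widehat{E}[p]$; in particular every non-zero $x\in\widehat{E}[p]$ lies in $\mathfrak{m}_L$ and is a root of $[p](T)\in\mathcal{O}_k[\![T]\!]$, the power series giving multiplication by $p$ on $\widehat{E}$.

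Next I would pin down the shape of $[p](T)=pT+a_2T^2+\cdots$. Because $k$ is unramified and $\widehat{E}$ has height $2$, the reduction $\overline{[p]}(T)\in\F[\![T]\!]$ has lowest non-zero term $\overline{c}\,T^{p^2}$ with $\overline{c}\neq 0$; equivalently $a_i\in p\mathcal{O}_k$ for $1\leq i<p^2$ while $a_{p^2}\in\mathcal{O}_k^{\times}$. Weierstrass preparation then gives $[p](T)=u(T)P(T)$ with $u\in\mathcal{O}_k[\![T]\!]^{\times}$ and $P\in\mathcal{O}_k[T]$ a distinguished polynomial of degree $p^2$. Since $P(0)=[p](0)=0$, I factor $P(T)=T\,Q(T)$ with $Q\in\mathcal{O}_k[T]$ monic of degree $p^2-1$; comparing low-order terms of $[p]=uP$ shows $Q(0)=p/u(0)$ (so $v_k(Q(0))=1$), while all intermediate coefficients of $Q$ lie in $p\mathcal{O}_k$. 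The non-zero elements of $\widehat{E}[p]$ are precisely the roots of $Q$ in $\overline{k}$.

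Finally I would apply the Newton polygon with respect to $v_L$, using $e_{L/k}=p^2-1$. The $v_L$-values of the coefficients of $Q$ are $p^2-1$ in degree $0$, at least $p^2-1$ in the intermediate degrees, and $0$ in the top degree $p^2-1$; the lower convex hull is therefore the single segment from $(0,p^2-1)$ to $(p^2-1,0)$ of slope $-1$, which forces every root of $Q$ to have $v_L$-value exactly $1$. I do not anticipate a serious obstacle: the argument is forced by the coefficient bookkeeping dictated by the height-two hypothesis. As a conceptual cross-check, the cyclic inertia subgroup of $\Gal(L/k)$ has order $p^2-1$ and acts transitively on $\widehat{E}[p]\setminus\{0\}$ (via $\F_{p^2}^{\times}$ acting on $\F_{p^2}$), so $v_L$ is constant on the non-zero $p$-torsion; combined with $\prod_{0\neq x\in\widehat{E}[p]}x=\pm Q(0)$ and $v_L(Q(0))=p^2-1$, this again yields $v_L(x)=1$.
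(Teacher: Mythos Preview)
Your argument is correct. The Newton polygon computation is clean: since $k/\Q_p$ is unramified and $\widehat{E}$ has height $2$, the coefficients of $[p](T)$ below degree $p^2$ lie in $p\mathcal{O}_k$ while $a_{p^2}\in\mathcal{O}_k^\times$, so Weierstrass preparation and the factorization $P(T)=T\,Q(T)$ produce a monic $Q$ of degree $p^2-1$ with constant term of $v_k$-value $1$ and all intermediate coefficients in $p\mathcal{O}_k$; the single Newton segment then forces $v_L(x)=1$ for every root. Your Galois cross-check is also valid, since the inertia image, being cyclic of order $p^2-1$ in $GL_2(\F_p)$, is a non-split Cartan and hence acts transitively on $E[p]\setminus\{0\}$.

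The paper's proof takes a different, much shorter route: it simply observes that $v_k(a_p)\geq p/(p+1)$ (which follows from supersingularity and $e_k=1$) and then invokes \cite[Theorem~3.10.7]{Katz1973} to conclude $v_k(x)=1/(p^2-1)$, whence $v_L(x)=e_{L/k}/(p^2-1)=1$. Katz's theorem packages exactly the kind of Newton polygon/valuation analysis you carried out explicitly, so the underlying mathematics is the same; your version is more self-contained and avoids the external reference, while the paper's version is a one-line citation. Either is perfectly acceptable here.
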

\begin{proof}
	Since the valuation of the $p$-th coefficient, $a_p$, of multiplication by $p$ in the formal group $\widehat{E}$ satisfies $v_k(a_p)\ge pe/(p+1) = p/(p+1)$, we have 
	$\displaystyle v_L(x) = e_{L/k}\frac{1}{p^2-1} = 1$  (cf.\ \cite[Theorem~3.10.7]{Katz1973}). 
\end{proof}
From \eqref{MF}, we obtain a decomposition, 
\begin{eqnarray}\label{decompose}&&E(L)/p\simeq 
\overline{U}_L^{p(e_0(L) - t_0(L)) } \oplus \overline{U}_L^{pt_0(L)}
 = \overline{U}_L^{p^2+1}\oplus\overline{U}_L^{p+1},
 \end{eqnarray}
since $t_0(L)=\max\{v_L(x):0\neq x\in\widehat{E}[p]\} = 1$ 
and $e_0(L) = p+1$. 

\smallskip
Next we make a simplifying assumption that the extension $L=k(E[p])/k$ is \textbf{totally ramified} of degree $p^2-1$. We consider the restriction \[\res_{L/k}:\mathcal{D}^1_{k}\rightarrow\mathcal{D}_L^{p^2-1},\] which is an injective map of $\F_p$-vector spaces, since  $[L:k]$  is coprime to $p$ (cf.\ \autoref{normrestr}). Let $x\in\mathcal{D}^1_{k}$. We will identify $x$ with its image, $\res_{L/k}(x)$. We are interested in obtaining concrete information about the ramification of the finite Galois extension $L\left(\frac{1}{p}x\right)$. This will be achieved in \autoref{resdec}, but we first need some reminders from Kummer theory.


Let $F/k$ be an arbitrary finite extension such that $\mu_p\subset F^\times$. 
Let $a\in F^{\times}/p = F^\times/(F^{\times})^p$. Consider the Kummer extension $F_1=F(\sqrt[p]{a})$. 
Let $G=\Gal(F_1/F)$ be the Galois group of $F_1/F$. 
In this setting, \autoref{Kummer extension} now reads as follows:

\begin{lem}
\label{jump} 
Let $a\in F^\times/p$. Consider the Galois extension  $F_1=F(\sqrt[p]{a})$.  
\begin{enumerate}[label=$(\alph*)$]
\item If $a\in\overline{U}_{F}^i\setminus\overline{U}_{F}^{i+1}$, for some $0\le i<pe_0(F)$, then $F_1/F$ is a totally ramified extension of degree $p$ with the jump at $s=pe_0(F)-i$. 
\item If $a \in \overline{U}_F^{pe_0(F)}\setminus \overline{U}_{F}^{pe_0(F)+1}$, 
then the extension $F_1/F$ is unramified extension of degree $p$. 
\end{enumerate}
\end{lem}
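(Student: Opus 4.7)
The plan is to deduce this statement as a direct specialization of Kawachi's Kummer extension lemma (\autoref{Kummer extension}) applied to the height $1$ isogeny $\phi = [p]: \widehat{\mathbb{G}}_m \to \widehat{\mathbb{G}}_m$ on the multiplicative formal group. The relevant invariants are easy to compute: since $[p](T) = (1+T)^p - 1 = pT + \binom{p}{2}T^2 + \cdots + T^p$, the leading coefficient is $p$, so $t([p]) = v_F(p) = e_F$ and $t_0([p]) = e_0(F)$. The kernel $\widehat{\mathbb{G}}_m[p] = \mu_p$ is contained in $F^\times$ by hypothesis, so the hypotheses of \autoref{Kummer extension} are satisfied.

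Next I would match the filtrations. By definition $\mathcal{J}_F^i$ sits inside $\widehat{\mathbb{G}}_m(F)/p\widehat{\mathbb{G}}_m(F) = U_F^1/(U_F^1 \cap (F^\times)^p)$, whereas the filtration $\overline{U}_F^i = \img(U_F^i \to F^\times/p)$ lives on $F^\times/p$. The natural injection $U_F^1/(U_F^1 \cap (F^\times)^p) \hookrightarrow F^\times/p$ identifies $\mathcal{J}_F^i$ with $\overline{U}_F^i$ for every $i \ge 1$. The case $i=0$ of part $(a)$ is moreover vacuous: by \autoref{units}$(b)$ with $i=0$ the quotient $\overline{U}_F^0/\overline{U}_F^1$ vanishes, so the set $\overline{U}_F^0\setminus\overline{U}_F^1$ is empty.

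Given $a \in \overline{U}_F^i\setminus \overline{U}_F^{i+1}$ with $i\ge 1$, I would choose a representative $a \in U_F^i$; by the preceding identification such a representative may be taken to satisfy $a \notin U_F^{i+1}\cdot (F^\times)^p$, for otherwise its class would lie in $\overline{U}_F^{i+1}$. Writing $a = 1 + b$ with $b \in \mathfrak{m}_F^i$, the Kummer extension $F_1 = F(\sqrt[p]{a})$ coincides with $F(\phi^{-1}(b))$, because the equation $(1+y)^p = a$ is exactly $\phi(y) = b$ in the formal group. By construction, $b \in \mathcal{J}_F^i \setminus \mathcal{J}_F^{i+1}$.

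It now suffices to invoke \autoref{Kummer extension}. In part $(a)$, with $1 \le i < pe_0(F) = pt_0(\phi)$, conclusion $(a)$ of that lemma yields that $F_1/F$ is totally ramified of degree $p$ with jump at $s = pt_0(\phi) - i = pe_0(F) - i$. In part $(b)$, with $i = pe_0(F) = pt_0(\phi)$, conclusion $(b)$ of that lemma yields that $F_1/F$ is unramified of degree $p$. The only mild subtlety in the argument is the bookkeeping of the filtration shifts when translating between the additive formal-group description (in which $\phi^{-1}$ acts) and the multiplicative Kummer description (in which $\sqrt[p]{\cdot}$ acts); no further computation is required.
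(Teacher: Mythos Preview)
Your proposal is correct and follows exactly the approach the paper intends: the paper itself offers no proof beyond the sentence ``In this setting, \autoref{Kummer extension} now reads as follows,'' and you have simply spelled out the specialization to $\phi=[p]$ on $\widehat{\mathbb{G}}_m$, including the identification $\mathcal{J}_F^i=\overline{U}_F^i$ for $i\ge 1$ and the observation that the $i=0$ case is vacuous. There is nothing to add.
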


Next, consider the Herbrand $\psi_{F_1/F}$ function defined as follows, 
\[\psi_{F_1/F}(x)=\left\{
\begin{array}{cc}
x,& x\leq s\\
s+p(x-s),& x>s.
\end{array}\right.\]

 We will need the following facts about the norm map $N_{F_1/F}$. 
\begin{prop}[{cf.\ \cite[V.3, Corollaries 2 \& 3]{serre1979local}}]\label{psi}
\begin{enumerate}
\item For every integer $1\leq i<s$, the norm map $N_{F_1/F}:F_1^\times\rightarrow F^\times$ induces a surjection, \[\frac{U_{F_1}^{i}}{U_{F_1}^{i+1}}\xrightarrow{N_{F_1/F}}\frac{U_{F}^i}{U_{F}^{i+1}}\rightarrow 0.\]
\item For every integer $i>s$, the norm map  induces a surjection  \[U_{F_1}^{\psi_{F_1/F}(i)}\xrightarrow{N_{F_1/F}} U_{F}^i\rightarrow 0.\]  
\end{enumerate} 
\end{prop}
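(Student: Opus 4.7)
The plan is essentially to cite \cite[V.3, Corollaries 2 and 3]{serre1979local}, since the statement is a classical fact about the effect of the norm map on the higher unit filtration of a totally ramified cyclic degree-$p$ Galois extension. The input we need is already assembled: by \autoref{jump}, $F_1 = F(\sqrt[p]{a})$ is a totally ramified Galois extension of degree $p$ with a single ramification jump at $s$, so $G^i = G$ for $i \le s$ and $G^i = 1$ for $i > s$, and upper and lower numbering coincide because $[F_1:F]$ is prime.

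For part (1), I would expand $N_{F_1/F}(1 + u\pi_{F_1}^i) = \prod_{\sigma \in G}(1 + \sigma(u)\sigma(\pi_{F_1})^i)$ for an arbitrary element of $U_{F_1}^i$. Since $i < s$, the corrections $\sigma(\pi_{F_1}) - \pi_{F_1}$ have valuation at least $s+1 > i+1$, so modulo $U_F^{i+1}$ the norm is controlled by a residue-field map; totally ramifiedness gives $\F_{F_1} = \F_F$, and a direct computation shows the induced map on graded pieces $U_{F_1}^i/U_{F_1}^{i+1} \to U_F^i/U_F^{i+1}$ is surjective.

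For part (2), above the jump I would use the formula $\psi_{F_1/F}(i) = s + p(i-s)$ to pin down the correct filtration level in $F_1$ and carry out the analogous graded computation, this time with the dominant contribution coming from $\sigma(\pi_{F_1}) - \pi_{F_1}$ rather than from the trace. Graded surjectivity at every level $j \ge i$, combined with completeness of $F$ and successive approximation (lifting corrections one filtration step at a time), upgrades to surjectivity onto the full group $U_F^i$. The only delicate point would be the boundary case $i = s$, but this is excluded from the statement, so no genuine new obstacle appears beyond what is already worked out in Serre's book; accordingly, I expect the authors' proof to consist of little more than invoking the reference.
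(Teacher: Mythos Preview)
Your proposal is correct and matches the paper's approach exactly: the paper gives no proof at all for this proposition, simply stating it with the citation to \cite[V.3, Corollaries 2 \& 3]{serre1979local} in the header. Your sketch of how the argument in Serre's book proceeds is accurate, and your closing expectation that the authors merely invoke the reference is precisely what happens.
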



\smallskip
We now come back to the element $x\in \mathcal{D}_k^1$ considered before the Kummer theory aside. We think of the element $x$ as lying in $\mathcal{D}_L^{p^2-1}$ under the restriction map. 
The following lemma gives us how $x$ decomposes under the decomposition \eqref{decompose}. We note that this lemma is a key computation that will be used several times in the next section. 

\begin{lem}\label{resdec} Let $x\in\mathcal{D}_k^1$. Consider the finite extension $L_0=L\left(\frac{1}{p}x\right)$.
\begin{enumerate}
\item Suppose $x\in \mathcal{D}_k^1\setminus\mathcal{D}_k^2$. Under the decomposition \eqref{decompose} the image of $x$ in $E(L)/p=\mathcal{D}_L^1$ can be written as $(x_0,x_1)$  with $x_0\in\overline{U}_L^{p^2+p}$ and $x_1\in\overline{U}_L^{p+p^2-1}\setminus\overline{U}_L^{p + p^2}$. In particular, we have a tower of finite extensions $L_0\supset L_1\supset L$, with $L_1\supset L$ unramified and $L_0\supset L_1$  totally ramified of degree $p$. The jump in the ramification filtration of $\Gal(L_0/L_1)$ happens at $s=1$. 
\item Suppose $x\in \mathcal{D}_k^2$. Then, $L_0=L$.
\end{enumerate}
\end{lem}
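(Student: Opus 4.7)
The plan is to analyze $\res_{L/k}(x)\in E(L)/p$ under the decomposition \eqref{decompose}, then read off the ramification of $L_0$ by applying the Kummer-theoretic \autoref{jump} to each summand.

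Since $e_k=1$ and $L/k$ is totally ramified of degree $p^2-1$, any lift of $x$ to $\widehat{E}^i(k)$ sits in $\widehat{E}^{i(p^2-1)}(L)$. For part (2), a lift in $\widehat{E}^2(k)$ lies in $\widehat{E}^{2(p^2-1)}(L)\subset\widehat{E}^{p^2+1}(L)$, whose image in $E(L)/p=\mathcal{D}_L^1$ sits in $\mathcal{D}_L^{p^2+1}$. This group vanishes by \autoref{filtration2}(d) applied to $L$, since $p(e_0(L)-t_0(L))=p^2$. Hence $\res_{L/k}(x)=0$ and $L_0=L$.

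For part (1), a lift of $x$ in $\widehat{E}^1(k)\setminus\widehat{E}^2(k)$ has $v_L$-valuation exactly $p^2-1$, so by \autoref{filtration2}(a) its image generates $\mathcal{D}_L^{p^2-1}/\mathcal{D}_L^{p^2}\simeq\F$, giving $\res_{L/k}(x)\in\mathcal{D}_L^{p^2-1}\setminus\mathcal{D}_L^{p^2}$. Write $\res_{L/k}(x)=(x_0,x_1)$ under \eqref{decompose}. I compute $x_1$ by applying the projection $\varepsilon$ from \eqref{SES}: since $\varepsilon(x)\in\mathcal{F}_L^{p^2-1}\setminus\mathcal{F}_L^{p^2}$ and $t_0(\check\phi)=e_0(L)-t_0(L)=p$, \autoref{filtered isom} applied to $\check\phi$ yields $f(\mathcal{F}_L^i)=\overline{U}_L^{i+p}$, hence $x_1\in\overline{U}_L^{p^2+p-1}\setminus\overline{U}_L^{p^2+p}$. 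For $x_0$, I fix a section $s$ of $\varepsilon$ and solve $\check\phi(a)=x-s(\varepsilon(x))\in\mathcal{D}_L^{p^2-1}$. The isomorphism $\check\phi:\mathcal{G}_L^j/\mathcal{G}_L^{j+1}\simeq\mathcal{D}_L^{pj}/\mathcal{D}_L^{pj+1}$ from \autoref{filtration2}(b) combined with the vanishing $\mathcal{G}_L^{p+1}=0$ from \autoref{graded quotients}(d) (applied to $\phi$, where $t_0(\phi)=1$) forces $a\in\mathcal{G}_L^p$; then \autoref{filtered isom} for $\phi$ gives $x_0=f(a)\in f(\mathcal{G}_L^p)=\overline{U}_L^{p^2+p}$.

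To conclude, since the Weil pairing forces $\mu_p\subset L$ and $G_L$ acts trivially on $E[p]$, the extension $L_0/L$ is the compositum of the Kummer extensions for $x_0$ and $x_1$ in $L^\times/p$. Set $L_1:=L(\sqrt[p]{x_0})$. Applying \autoref{jump}(b) to $x_0\in\overline{U}_L^{pe_0(L)}=\overline{U}_L^{p^2+p}$ shows $L_1/L$ is unramified (of degree $1$ or $p$). Because $x_0$ and $x_1$ lie at different levels in the unit filtration, they are $\F_p$-independent in $L^\times/p$, so $x_1$ remains nontrivial in $L_1^\times/p$; as $L_1/L$ is unramified, $e_0(L_1)=e_0(L)$ and $x_1$ retains its level, so \autoref{jump}(a) gives $L_0/L_1$ totally ramified of degree $p$ with jump $s=pe_0(L_1)-(p^2+p-1)=1$.

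The main obstacle is the location of the $x_0$ component, which requires committing to a specific splitting of \eqref{SES} and tracking how $\check\phi$ moves elements between the $\mathcal{G}_L^\bullet$ and $\mathcal{D}_L^\bullet$ filtrations. Once $x_0$ is pinned down in $\overline{U}_L^{p^2+p}$, the ramification assertions follow from routine application of \autoref{jump} and the fact that the unramified extension $L_1/L$ preserves $e_0$ and the unit filtration.
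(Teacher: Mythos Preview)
Your proof is correct and follows essentially the same approach as the paper: decompose $x$ via the short exact sequence \eqref{SES}, locate each component in the unit filtration using \autoref{filtered isom}, and read off the ramification. The only cosmetic difference is that you translate to classical Kummer theory and apply \autoref{jump} to $x_0,x_1\in L^\times/p$, whereas the paper applies \autoref{Kummer extension} directly to the height-$1$ isogenies $\phi,\check\phi$; these two routes are equivalent precisely because \autoref{filtered isom} is the dictionary between them.
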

\begin{proof} In what follows we will denote by $\oplus$ the addition law given by the formal group $\widehat{E}_L$. Moreover, we will fix $\pi$, $\pi_L$ uniformizer elements of $k, L$ respectively. 

 We first prove (2). If $x\in \mathcal{D}_k^2$, then $v_L(x)\geq 2(p^2-1)$, and hence its image in $\mathcal{D}_L^1$ lies in $\mathcal{D}_L^{2(p^2-1)}$. Since $2(p^2-1)>p^2$, 
the claim (2) follows  by \autoref{filtration2} $(d)$, 
since $\mathcal{D}_L^{2(p^2-1)}= 0$. 

We next prove (1).  Following the notation from \eqref{SES}, for every finite extension $L'/k$ we have a  splitting short exact sequence of $\F_p$-vector spaces,
\[0\rightarrow\frac{\widehat{E}^{'1}(L')}{\phi(\widehat{E}^1(L'))}\xrightarrow{\check{\phi}}\frac{\widehat{E}^1(L')}{p}\xrightarrow{\varepsilon}\frac{\widehat{E}^1(L')}{\check{\phi}(\widehat{E}^{'1}(L'))}\rightarrow 0.\]  We can therefore decompose the image of $x$ in $ \dfrac{\widehat{E}^1(L)}{p}$ as 
$x=\check{\phi}(w_0)\oplus w_1$, where $w_0$ is the class of some $\tilde{w}_0\in \widehat{E}^{'1}(L)$ mod $\phi(\widehat{E}^1(L))$ and $w_1$ is the class of some $\tilde{w}_1\in\widehat{E}^1(L)$ mod $\check{\phi}(\widehat{E}^{'1}(L))$. Note that the same decomposition holds also over any finite extension $L'/L$. This implies that there exists an element $y\in\widehat{E}^1(L')$ such that $x=py$ if and only if there exist elements $\tilde{z_0}\in \widehat{E}^1(L')$ and $\tilde{z_1}\in \widehat{E}^{'1}(L')$ such that $\tilde{w_0}=\phi(\tilde{z_0})$ and $\tilde{w_1}=\check{\phi}(\tilde{z_1})$. This means that the extension $L\left(\frac{1}{p}x\right)$ is precisely the compositum of the extensions $L_1=L(\phi^{-1}(\tilde{w}_0))$ and $L_2=L(\check{\phi}^{-1}(\tilde{w}_1))$  (for this notation see  paragraph preceding \autoref{Kummer extension}).

 Next we analyze the extensions $L_1, L_2$. We start with $L_1=L(\phi^{-1}(\tilde{w}_0))$ and we claim that the extension $L_1/L$ is unramified. It is enough to show that we are in the set-up of \autoref{Kummer extension} ($b$), which we apply for the height $1$ isogeny $\phi:\widehat{E}\rightarrow\widehat{E}'$.  Recall from the discussion in Section~\ref{sssetup} and \autoref{valuation} that we have an equality $t_0(\phi)=t_0(L)=1$. Thus, it suffices to show that $v_L(\tilde{w}_0)\geq p$. 
 Since $x$ is the restriction of an element from $\mathcal{D}_k^1\setminus\mathcal{D}_k^2$, $v_L(x)=p^2-1$. This yields $v_L(\check{\phi}(\tilde{w}_0))\geq p^2-1$. 
Let $i$ be such that $\tilde{w}_0\in\widehat{E}^{'i}(L)\setminus\widehat{E}^{'i+1}(L)$. Since $\check{\phi}$ is a height $1$ isogeny, it follows that 
\[
v_L(\check{\phi}(\tilde{w}_0)) = pi
\] 
(cf.\ \cite[Lemma 2.2]{Hiranouchi/Hirayama2013}). 
In particular, it is divisible by $p$. 
Combining the two relations, we conclude that $i = v_L(\tilde{w}_0)\ge p$ as required. Next consider the isomorphism \[\displaystyle\frac{\widehat{E}^{'1}(L)}{\phi(\widehat{E}^1(L))}\xrightarrow{\simeq}\overline{U}_L^{p^2+1}\] and let $x_0$ be the image of $w_0$ under this isomorphism. The above computation together with \autoref{filtered isom} give us that $x_0\in\overline{U}_L^{p^2+p}$. 

Next we consider the extension $L_2=L(\check{\phi}^{-1}(\tilde{w}_1))$, and let $\tilde{z}_1\in \widehat{E}'(L_2)$ be such that $\check{\phi}(\tilde{z}_1)=\tilde{w}_1$. We apply \autoref{Kummer extension} ($a$) for the height $1$ isogeny $\check{\phi}:\widehat{E}'\rightarrow\widehat{E}$. By the discussion in \autoref{sssetup} it follows that \[t_0(\check{\phi})=e_0(L)-t_0(L)=p+1-1=p.\] Thus, in order to show that we are in the set-up of \autoref{Kummer extension} ($a$), it is enough to verify that $v_L(w_1)<p^2$, which follows directly by the previous paragraph and the equality $v_L(x)=p^2-1$. We conclude that the extension $L_2/L$ is totally ramified of degree $p$ and \autoref{Kummer extension} ($a$) gives that the jump of the ramification filtration of $\Gal(L_2/L)$ occurs at $s=pt_0(\check{\phi})-(p^2-1)$. Moreover, if $x_1$ is the image of $w_1$ under the isomorphism $\widehat{E}(L)/\check{\phi}(\widehat{E}'(L))\simeq\overline{U}_L^{p+1}$, then \autoref{filtered isom} implies that $x_1\in\overline{U}_L^{p+p^2-1}\setminus\overline{U}_L^{p^2+p}$. 

We conclude the proof by noticing that the extensions $L_1/L$, $L_2/L$ are totally disjoint, one being unramified and the other totally ramified of degree $p$. Thus, for the compositum $L\left(\frac{1}{p}x\right)=L_1\cdot L_2$ we have an isomorphism $\Gal(L_1 L_2/L_1)\simeq\Gal(L_2/L)$. 

\end{proof}

\subsection{Galois action on graded quotients}
 
We close this section with a technical computation that will be used in the proofs of theorems \ref{main2} and \ref{main3}. We continue working in the set-up of Section~\ref{unramified section}. Namely, $E$ is an elliptic curve over a finite unramified extension $k$ of $\Q_p$ and the extension $L=k(E[p])/k$ is totally ramified of degree $[L:k]=e(L/k)=p^2-1$. We want to understand how the Galois group $\Gal(L/k)$ acts on the graded quotients $\mathcal{D}_L^i/\mathcal{D}_L^{i+1}$. This will be achieved in \autoref{filtration3}. Before that we start with a preliminary discussion about tame extensions and formal groups.

Suppose $k/\Q_p$ is finite and   $L/k$ is a totally ramified extension of degree coprime to $p$ (that is, $L/k$ is a tame extension). Fix a uniformizer $\pi$ of $L$. Let $G_0=\Gal(L/k)$ be the Galois group. Because $L/k$ is tame,  $G_0$ is  cyclic (\cite[IV.2, Corollary 1]{serre1979local}). Let $\sigma$ be a generator of $G_0$. We have an injection, $f:G_0\hookrightarrow U_L^0/U^1_L$ given by $\displaystyle\sigma^m\mapsto \frac{\sigma^m(\pi)}{\pi}$. Since there is an isomorphism $U_L^0/U^1_L\simeq\F_L^\times$, $f$ identifies $G_0$ with a subgroup of $\F_L^\times$. Set $\overline{u}=\sigma(\pi)/\pi\in\F_L^\times$. The following lemma shows how $\sigma$ acts on the quotient $U_L^i/U^{i+1}_L\simeq\F_L$, when $i\geq 1$. 

\begin{lem}\label{sigmaaction} Let $i\geq 1$. Then $\sigma$ induces an automorphism, $\overline{\sigma}: U_L^i/U^{i+1}_L\rightarrow U_L^i/U^{i+1}_L$, which is given by multiplication by $\overline{u}^i$. 
\end{lem}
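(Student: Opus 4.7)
The plan is to reduce the statement to a direct, essentially one-line computation using the standard identification $U_L^i/U_L^{i+1}\xrightarrow{\simeq}\F_L$ given by $1+a\pi^i\mapsto \bar a$, where the bar denotes reduction modulo $\mathfrak{m}_L$. First I would note that $\sigma$ is a ring automorphism of $\mathcal{O}_L$ sending $\mathfrak{m}_L$ to $\mathfrak{m}_L$, hence preserves each $U_L^i$, so it induces a well-defined $\F_p$-linear automorphism $\overline{\sigma}$ on every graded piece $U_L^i/U_L^{i+1}$.

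Next I would use that $L/k$ is \emph{totally} ramified to deduce that the residue extension $\F_L/\F_k$ is trivial, so $\sigma$ acts as the identity on $\mathcal{O}_L/\mathfrak{m}_L = \F_L$. In particular, for $a\in\mathcal{O}_L$ we have $\overline{\sigma(a)}=\bar a$. Since $\sigma(\pi)=u\pi$ with $u\in U_L^0$ (because $\sigma(\pi)$ is a uniformizer, $u$ has valuation $0$), a typical element $1+a\pi^i\in U_L^i$ maps under $\sigma$ to
\[
\sigma(1+a\pi^i)=1+\sigma(a)\,\sigma(\pi)^i=1+\sigma(a)\,u^i\pi^i.
\]
Passing to the quotient $U_L^i/U_L^{i+1}$ and translating through the identification with $\F_L$, the class of $1+a\pi^i$ is sent to $\overline{\sigma(a)\,u^i}=\bar a\cdot\overline{u}^i$. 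This exhibits $\overline{\sigma}$ as multiplication by $\overline{u}^i\in\F_L^\times$ on $U_L^i/U_L^{i+1}$, as claimed.

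There is no real obstacle here; the only subtle points are the two observations that (i) $u=\sigma(\pi)/\pi$ is automatically a unit (both $\pi$ and $\sigma(\pi)$ are uniformizers), and (ii) total ramification is what makes $\sigma$ trivial on the residue field, which is what allows one to pull $\overline{\sigma(a)}$ back to $\bar a$ and thereby factor out $\overline u^i$ cleanly.
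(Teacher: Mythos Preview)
Your proof is correct and follows essentially the same approach as the paper: both use the identification $U_L^i/U_L^{i+1}\simeq\F_L$ (the paper routes it through $\widehat{\G}_a(\mathfrak{m}_L^i)/\widehat{\G}_a(\mathfrak{m}_L^{i+1})$, you write it multiplicatively as $1+a\pi^i\mapsto\bar a$), and both hinge on the observation that total ramification makes $\sigma$ act trivially on $\F_L$, so that $\sigma(\pi^i)=u^i\pi^i$ yields multiplication by $\overline{u}^i$ on the graded piece. The only cosmetic difference is that the paper first argues $\overline{\sigma}$ is $\F_L$-linear and then computes the scalar on a single generator, whereas you compute on a general element directly.
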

\begin{proof}
Since $U_L^i/U^{i+1}_L\simeq\F_L$, and $L/k$ is totally ramified, the map  $\overline{\sigma}$ is an $\F_L$-linear map, $\overline{\sigma}:\F_L\rightarrow \F_L$, and hence it is given by multiplication by a non-zero scalar $c\in\F_L^\times$. We will show that $c=\overline{u}^i$. For every $i\geq 1$, we have an isomorphism of groups,
\[\frac{U_L^i}{U^{i+1}_L}=\frac{\widehat{\G}_m(\mathfrak{m}_L^i)}{\widehat{\G}_m(\mathfrak{m}_L^{i+1})}\simeq\frac{\widehat{\G}_a(\mathfrak{m}_L^i)}{\widehat{\G}_a(\mathfrak{m}_L^{i+1})}=\frac{(\pi^i)}{(\pi^{i+1})},\] (cf.\ \cite[Chapter IV, Proposition~3.2]{Silverman2009}). It is immediate that the isomorphism is $G_0$-equivariant. Thus, it is enough to see how $\overline{\sigma}$ acts on the quotient $\displaystyle\frac{(\pi^i)}{(\pi^{i+1})}$. But this is immediate, since \[\pi^i\mapsto\sigma(\pi^i)=\left(\frac{\sigma(\pi)}{\pi}\right)^i\pi^i=\overline{u}^i\pi^i,\] and the class of $\pi^i$ corresponds to $1$ under the isomorphism $\displaystyle\frac{(\pi^i)}{(\pi^{i+1})}\simeq\F_L$. 

\end{proof}

 Note that there was nothing special about the multiplicative group in \autoref{sigmaaction}. Namely, according to \cite[Chapter IV, Proposition 3.2]{Silverman2009}, if $\mathcal{F}$ is any formal group over $\mathcal{O}_k$, then for each $i\geq 1$ the map   $\mathcal{F}(\mathfrak{m}^i_k)/\mathcal{F}(\mathfrak{m}^{i+1}_k)\rightarrow\widehat{\G}_a(\mathfrak{m}^i_k)/\widehat{\G}_a(\mathfrak{m}^{i+1}_k)$ induced by the identity map on sets is an isomorphism of groups. Thus, if $L/k$ is a tame extension, then the induced map $\overline{\sigma}: \mathcal{F}(\mathfrak{m}^i_L)/\mathcal{F}(\mathfrak{m}^{i+1}_L)\rightarrow \mathcal{F}(\mathfrak{m}^i_L)/\mathcal{F}(\mathfrak{m}^{i+1}_L)$, which is obviously $G_0$-equivariant, is given by scalar multiplication by $\overline{u}^i=(\sigma(\pi)/\pi)^i$. 
 
  We are now going to apply this to our elliptic curve $E$ satisfying the assumptions of the beginning of this subsection. 
   If $G_0=\Gal(L/k)=\langle\sigma\rangle$, then $\sigma$ induces a $\F_L$-linear automorphism
\[\overline{\sigma}:\widehat{E}^1(L)/\widehat{E}^{2}(L)\xrightarrow{\cdot\overline{u}}\widehat{E}^1(L)/\widehat{E}^{2}(L)
\] of exact order $p^2-1$. In fact, the Galois group $G_0$ permutes the $p$-torsion points. 
Next we see how the $G_0$-action behaves under the decomposition \eqref{decompose}. Following the notation of Section~\ref{sssetup} we may take $E'=E_L/\langle y_0\rangle$, where $y_0$ is some fixed torsion point. Consider the isogenous elliptic curve $E^{'\sigma}=E_L/\langle \sigma(y_0)\rangle$ and the isogenies $E_L\xrightarrow{\phi^\sigma} E^{'\sigma}$, $E^{'\sigma}\xrightarrow{\check{\phi^\sigma}} E_L$. It is clear that we have an equality $\phi^\sigma=\sigma\circ\phi\circ\sigma^{-1}$. We therefore get a commutative diagram,
\[\begin{tikzcd}
	\widehat{E}'(L)/\phi(\widehat{E}_L(L))\ar{r}{\check{\phi}}\ar{d}{\sigma}& \widehat{E}_L(L)/p \ar{r}{\varepsilon}\ar{d}{\sigma} & \widehat{E}_L(L)/\check{\phi}(\widehat{E}'(L))\ar{d}{\sigma} \\
\widehat{E}^{'\sigma}(L)/\phi^\sigma(\widehat{E}_L(L))\ar{r}{\check{\phi^\sigma}} & \widehat{E}_L(L)/p \ar{r}{\varepsilon^\sigma} & \widehat{E}_L(L)/\check{\phi^\sigma}(\widehat{E}^{'\sigma}(L)).
	\end{tikzcd}\] 
Combining this diagram with the information in \autoref{filtration2} we obtain the following lemma.

\begin{lem}\label{filtration3} Consider the decomposition $E_L(L)/p\simeq\overline{U}_L^{p^2+1}\oplus\overline{U}_L^{p+1}$ given in \eqref{decompose}. Let $\sigma$ be the generator of $G_0=\Gal(L/k)$. Then 
\begin{enumerate}
\item For each $j\in\{1,\ldots,p-1\}$, $\sigma$ induces an automorphism $\displaystyle\frac{\overline{U}_L^{p^2+j}}{\overline{U}_L^{p^2+j+1}}\xrightarrow{\overline{\sigma}}\frac{\overline{U}_L^{p^2+j}}{\overline{U}_L^{p^2+j+1}}$ given by multiplication by $\overline{u}^{pj}$. 
\item For each $i\in\{1,\ldots,p^2-1\}$  
which is coprime to $p$, 
$\sigma$ induces an automorphism $\displaystyle\frac{\overline{U}_L^{p+i}}{\overline{U}_L^{p+i+1}}\xrightarrow{\overline{\sigma}}\frac{\overline{U}_L^{p+i}}{\overline{U}_L^{p+i+1}}$ given by multiplication by $\overline{u}^{i}$. 
\end{enumerate}
\end{lem}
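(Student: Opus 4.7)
The plan is to pass to the intrinsic filtration $\mathcal{D}_L^{\bullet}$ on $\widehat{E}(L)/p = E_L(L)/p$ coming from the formal group of $E$, compute the $\sigma$-action on its graded pieces directly, and then transport it to the two summands of the decomposition via Lemma~\ref{filtration2} and Theorem~\ref{filtered isom}. For the first step, since $\widehat{E}$ is defined over $\mathcal{O}_k$ and $L/k$ is totally ramified, the argument of Lemma~\ref{sigmaaction} applied to the formal group $\widehat{E}$ (which, by the remark following its proof, works for any formal group over $\mathcal{O}_k$) shows that $\sigma$ acts on $\widehat{E}^i(L)/\widehat{E}^{i+1}(L)\simeq \mathbb{F}_L$ by multiplication by $\overline{u}^i$ for every $i\ge 1$. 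A Newton polygon calculation for $[p]$ on the height-$2$ formal group $\widehat{E}$ over $L$---using $v_L(p) = p^2-1$ and that the $T^{p^2}$-coefficient of $[p](T)$ is a unit---yields $p\widehat{E}(L)\subset \widehat{E}^{p^2}(L)$, so for $1\le i<p^2$ the natural surjection $\widehat{E}^i(L)/\widehat{E}^{i+1}(L)\twoheadrightarrow \mathcal{D}_L^i/\mathcal{D}_L^{i+1}$ is an isomorphism, and $\sigma$ acts on $\mathcal{D}_L^i/\mathcal{D}_L^{i+1}$ by $\overline{u}^i$.

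To transport to the summands, for part~(2) fix $i\in\{1,\dots,p^2-1\}$ coprime to $p$: Lemma~\ref{filtration2}~(a) identifies $\mathcal{D}_L^i/\mathcal{D}_L^{i+1}$ with $\mathcal{F}_L^i/\mathcal{F}_L^{i+1}$ via $\varepsilon$, and Theorem~\ref{filtered isom} applied to the height-$1$ isogeny $\check{\phi}$ (for which $t_0(\check{\phi}) = e_0(L)-t_0(L) = p$) identifies the latter with $\overline{U}_L^{p+i}/\overline{U}_L^{p+i+1}$; the composite is an isomorphism of one-dimensional $\mathbb{F}_L$-vector spaces through which the scalar $\overline{u}^i$ transports unchanged, proving part~(2). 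For part~(1), set $i=pj$ with $j\in\{1,\dots,p-1\}$; Lemma~\ref{filtration2}~(b) identifies $\mathcal{D}_L^{pj}/\mathcal{D}_L^{pj+1}$ with $\mathcal{G}_L^j/\mathcal{G}_L^{j+1}$ via $\check{\phi}$, and Theorem~\ref{filtered isom} applied to $\phi$ (with $t_0(\phi) = t_0(L) = 1$) identifies $\mathcal{G}_L^j/\mathcal{G}_L^{j+1}$ with $\overline{U}_L^{p^2+j}/\overline{U}_L^{p^2+j+1}$; the scalar $\overline{u}^{pj}$ from the previous step then transports through the composite to give the claim.

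The key subtlety---and the main obstacle---is that the decomposition $E_L(L)/p\simeq \overline{U}_L^{p^2+1}\oplus\overline{U}_L^{p+1}$ is not itself Galois equivariant: it relies on non-equivariant Kummer identifications $\widehat{E}[\phi]\simeq\mu_p$ and $\widehat{E}'[\check{\phi}]\simeq\mu_p$, and the commutative diagram displayed just before the lemma records that $\sigma$ intertwines the decomposition for $y_0$ with the one for $\sigma(y_0)$. Consequently, the scalars $\overline{u}^i$ and $\overline{u}^{pj}$ computed here are \emph{not} what Lemma~\ref{sigmaaction} applied to $\widehat{\mathbb{G}}_m$ would give for the native action on $\overline{U}_L^{p+i}/\overline{U}_L^{p+i+1}$ and $\overline{U}_L^{p^2+j}/\overline{U}_L^{p^2+j+1}$ (namely $\overline{u}^{p+i}$ and $\overline{u}^{p^2+j}$); rather, the lemma computes the action transported from $E_L(L)/p$ via the non-equivariant decomposition. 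Verifying that the composite identifications above really preserve the scalar $\overline{u}^{\bullet}$-action---so that the transported scalar is indeed what appears on the $\overline{U}_L$-side---requires tracking the leading-term behavior of the Kummer and isogeny maps, where a Frobenius twist on graded pieces appearing in the $\check{\phi}$-identification of Lemma~\ref{filtration2}~(b) is precisely compensated by one in the Kummer map $f_\phi$, yielding an $\mathbb{F}_L$-linear composite.
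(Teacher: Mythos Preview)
Your approach is essentially the paper's: both compute the $\sigma$-action on $\widehat{E}^i(L)/\widehat{E}^{i+1}(L)$ as multiplication by $\overline{u}^i$ via the remark after Lemma~\ref{sigmaaction}, then transport through the identifications of Lemma~\ref{filtration2} to the $\overline{U}_L$-graded pieces. The paper packages the transport as commutative diagrams involving the $\sigma$-twisted objects $E'^{\sigma}$, $\check{\phi}^{\sigma}$ introduced just before the lemma, whereas you route through $\mathcal{D}_L^{\bullet}$ explicitly and add a (correct) discussion of the non-equivariance subtlety that the paper leaves implicit in the phrase ``obtained by the diagram''.
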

\begin{proof}
This follows directly by \autoref{filtration2}. Namely, to prove (1) note that, for each $j\in\{1,\ldots,p-1\}$ which is coprime to $p$, we have a commutative diagram of  isomorphisms,
\[\begin{tikzcd}
	\overline{U}_L^{p^2+j}/\overline{U}_L^{p^2+j+1}\ar{d}{\overline{\sigma}}\ar{r}{\simeq} & \widehat{E}^{'j}(L)/\widehat{E}^{'j+1}(L)\ar{r}{\check{\phi}}\ar{d}{\overline{\sigma}}& \widehat{E}_L^{pj}(L)/\widehat{E}_L^{pj+1}(L)\ar{d}{\overline{\sigma}} \\
\overline{U}_L^{p^2+j}/\overline{U}_L^{p^2+j+1}\ar{r}{\simeq} & (\widehat{E}^{'\sigma})^{ j}(L)/(\widehat{E}^{'\sigma})^{ j+1}(L)\ar{r}{\check{\phi^\sigma}} & \widehat{E}_L^{pj}(L)/\widehat{E}_L^{pj+1}(L),
	\end{tikzcd}\] where the leftmost vertical map is obtained by the diagram. Since the rightmost vertical map is given by multiplication by $\overline{u}^{pj}$, the claim follows. 

Similarly, to prove (2) note that for each $i\in\{1,\ldots,p^2-1\}$ we have a commutative diagram of isomorphisms, 

 \[\begin{tikzcd}
	\widehat{E}_L^{i}(L)/\widehat{E}_L^{i+1}(L)\ar{d}{\overline{\sigma}}\ar{r}{\varepsilon} &  \mathcal{F}_L^{i}/\mathcal{F}_L^{i+1} \ar{r}{\simeq}\ar{d}{\overline{\sigma}}& \ar{d}{\overline{\sigma}} \overline{U}_L^{p+i}/\overline{U}_L^{p+i+1}\\ 
	\widehat{E}_L^{i}(L)/\widehat{E}_L^{i+1}(L)
\ar{r}{\varepsilon^\sigma} & (\mathcal{F}^{\sigma})_L^i/(\mathcal{F}^{\sigma})_L^{i+1}
\ar{r}{\simeq} 
& \overline{U}_L^{p+i}/\overline{U}_L^{p+i+1} , 
	\end{tikzcd}\] 
where 
$\mathcal{F}_L^i = \widehat{E}_L^{i}(L)/(\widehat{E}_L^{i}(L)\cap\check{\phi}(\widehat{E}^{'}(L)))$, 
$(\mathcal{F}^{\sigma})_L^i = \widehat{E}_L^{i}(L)/(\widehat{E}_L^{i}(L)\cap\check{\phi}^{\sigma}(\widehat{E}^{'\sigma}(L)))$, and 
	the rightmost vertical map is obtained by the diagram. Since the leftmost vertical map is given by multiplication by $\overline{u}^{i}$, the claim follows. 
%
%
\end{proof}

\begin{rem} 
We note that the various isomorphisms in Lemmas \ref{sigmaaction} and  \ref{filtration3} are not canonical; namely they depend on the choice of a uniformizer element $\pi$ of $L$. 
\end{rem}

\smallskip 
\section{Main Results}
%
In this section we give proofs for theorems \ref{maintheointro} and \ref{main3intro}. All our statements will be in terms of the Mackey product $(E_1\otimes E_2)(k)$, and the Somekawa $K$-group $K(k;E_1,E_2)$. Using the identification of the latter with the Albanese kernel $F^2(E_1\times E_2)$ (cf.\ \eqref{zerocycles}), the corresponding statements for zero-cycles will follow. We start with some preliminary lemmas. 

\begin{lem}\label{formalproduct} Let $E_1, E_2$ be elliptic curves with good reduction over a finite extension $k$ of $\Q_p$. Then, we have a surjection of Mackey functors,
\[\frac{\widehat{E}_1\otimes\widehat{E}_2}{p}\rightarrow\frac{E_1\otimes E_2}{p}\rightarrow 0.\]
\end{lem}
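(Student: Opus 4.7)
The plan is to exploit right-exactness of the Mackey product together with the short exact sequences of Mackey functors $0 \to \widehat{E}_i \to E_i \to [E_i/\widehat{E}_i] \to 0$ from \eqref{ses-2}. Tensoring the sequence for $i=1$ with $E_2$, then tensoring the sequence for $i=2$ with $\widehat{E}_1$, and reducing modulo $p$ yields the exact sequences
\[
(\widehat{E}_1 \otimes E_2)/p \to (E_1 \otimes E_2)/p \to ([E_1/\widehat{E}_1] \otimes E_2)/p \to 0,
\]
\[
(\widehat{E}_1 \otimes \widehat{E}_2)/p \to (\widehat{E}_1 \otimes E_2)/p \to (\widehat{E}_1 \otimes [E_2/\widehat{E}_2])/p \to 0.
\]
Composing, the asserted surjectivity reduces to showing that both $([E_1/\widehat{E}_1] \otimes E_2)/p$ and $(\widehat{E}_1 \otimes [E_2/\widehat{E}_2])/p$ vanish as Mackey functors.

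For each vanishing I would apply the vanishing trick of Remark~\ref{vanishingtrick} to a generating symbol $\{a, b\}_{L/K}$. For the first, with $a \in \overline{E}_1(\mathbb{F}_L)$ and $b \in E_2(L)$: since multiplication by $p$ is surjective on $\overline{E}_1(\overline{\mathbb{F}})$ (an isogeny of abelian varieties), some finite extension of residue fields $\mathbb{F}_{L'}/\mathbb{F}_L$ provides $a' \in \overline{E}_1(\mathbb{F}_{L'})$ with $pa' = \res_{L'/L}(a)$. The projection formula combined with the identity $N_{L'/L} \circ \res_{L'/L} = [L':L]$ then gives
\[
[L':L]\cdot \{a,b\}_{L/K} = \{\res_{L'/L}(a), \res_{L'/L}(b)\}_{L'/K} = p\{a', \res_{L'/L}(b)\}_{L'/K} \equiv 0 \pmod p.
\]
For the second vanishing the roles are reversed: given $a \in \widehat{E}_1(L)$, one passes to the Galois extension $L(p^{-1}a)/L$ (whose degree and ramification are controlled via Lemmas~\ref{Kummer extension} and~\ref{filtered isom}) to $p$-divide $a$, and then applies the analogous projection-formula identity.

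The main obstacle is ensuring that the auxiliary degree $[L':L]$ can be chosen coprime to $p$, since only then does the displayed identity force $\{a,b\}_{L/K}$ to vanish modulo $p$. In the supersingular case the first vanishing is immediate because $\overline{E}_1[p](\overline{\mathbb{F}}) = 0$ makes $p$-multiplication bijective on each $\overline{E}_1(\mathbb{F}_L)$, so one can take $L'=L$. In the ordinary case either $\overline{E}_1[p](\mathbb{F}_L) = 0$ (hence $\overline{E}_1(\mathbb{F}_L)/p$ already vanishes) or the Galois action on $\overline{E}_1[p](\overline{\mathbb{F}}) \simeq \mathbb{Z}/p$ factors through $(\mathbb{Z}/p)^{\times}$, giving a coprime-to-$p$ residue-field extension of degree dividing $p-1$ over which the required $p$-division can be realised. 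For the formal-group side the same coprime-to-$p$ issue is navigated via the ramification theory of height-one isogenies summarised in Section~3.
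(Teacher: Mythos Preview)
Your reduction via right-exactness of the Mackey product is exactly right, and matches the paper's setup. The gap is in how you execute the vanishing trick. You restrict \emph{both} coordinates to $L'$ and use $N_{L'/L}\circ\res_{L'/L}=[L':L]$, which only kills the symbol modulo $p$ when $[L':L]$ is coprime to $p$. But this coprimality fails in the cases that matter. For the first vanishing, if $\overline{E}_1[p]\subset\overline{E}_1(\F_L)$ (which does occur for suitable $L$, and you must handle all $L$), then any residue-field extension over which a nonzero $a\in\overline{E}_1(\F_L)/p$ becomes $p$-divisible has degree divisible by $p$; your dichotomy about the Galois action on $\overline{E}_1[p]$ describes the extension needed to rationalize the \emph{torsion}, not the further degree-$p$ extension needed to $p$-divide $a$ itself. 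For the second vanishing the situation is worse: $L(p^{-1}a)/L$ for $a\in\widehat{E}_1(L)$ has degree $p$ or $p^2$, so the ramification lemmas you cite cannot produce a coprime-to-$p$ extension.

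The paper's fix is to apply the vanishing trick as actually stated in Remark~\ref{vanishingtrick}: one does not restrict $b$, one writes it as a \emph{norm}. Having found the unramified extension $F'/F$ over which $a\in\overline{E}_1(\F_F)$ becomes $p$-divisible, one uses that $E_2$ has good reduction, so $N_{F'/F}\colon E_2(F')\to E_2(F)$ is surjective by \cite[Corollary~4.4]{Mazur1972}; writing $b=N_{F'/F}(b')$ and applying the projection formula gives $\{a,b\}_{F/K}=\{pa',b'\}_{F'/K}=0$ with no constraint on $[F':F]$. The second vanishing is handled symmetrically, using that the norm on the formal group $\widehat{E}_1$ is surjective for unramified extensions \cite[Proposition~3.1]{Hazewinkel1974}. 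The key input you are missing is this norm-surjectivity for unramified extensions under good reduction, which is what makes the degree of $F'/F$ irrelevant.
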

\begin{proof} We have an exact sequence of Mackey functors,
\[\widehat{E}_1/p\rightarrow E_1/p\rightarrow [E_1/\widehat{E}_1]/p\rightarrow 0.\] Applying the right exact functor $\otimes E_2/p$, we obtain an exact sequence of Mackey functors,
\[\widehat{E}_1/p\otimes E_2/p\rightarrow E_1/p\otimes E_2/p\rightarrow [E_1/\widehat{E}_1]/p\otimes E_2/p\rightarrow 0.\] We claim that $[E_1/\widehat{E}_1]/p\otimes E_2/p=0$, that is $([E_1/\widehat{E}_1]/p\otimes E_2/p)(K)=0$, for any finite extension $K/k$. To see this, let $\{x,y\}_{F/K}\in([E_1/\widehat{E}_1]/p\otimes E_2/p)(K)$, where $F/K$ is some finite extension, $x\in [E_1/\widehat{E}_1](F)/p$ and $y\in E_2(F)/p$. Recall that $[E_1/\widehat{E}_1](F)=\overline{E}_1(\F_F)$. Let $x'\in \overline{E}_1(\overline{\F_F})$ be a point such that $x=px'$. Then $x'$ is defined over some finite extension $\F'/\F_F$. 
Let $F'/F$ be a finite unramified extension with 
	$\F_{F'} = \F'$ (cf.\ ~\cite[I.6]{serre1979local}). 
Since the elliptic curve $E_2$ has good reduction,  \cite[Cororally 4.4]{Mazur1972} gives that the norm map $E_2(F')\xrightarrow{N_{F'/F}}E_2(F)$ is surjective. Thus, we can find   $y'\in E_2(F')$ such that $y=N_{F'/F}(y)$. The projection formula \eqref{projectionformula} yields (cf.\ \autoref{vanishingtrick}), 
\[\{x,y\}_{F/K}=\{x,N_{F'/F}(y)\}_{F/K}=N_{F'/F}(\{x,y'\}_{F'/F'})=N_{F'/F}(\{px',y'\}_{F'/F'})=0.\] We conclude that there is a surjection of Mackey functors,
\[\widehat{E}_1/p\otimes E_2/p\rightarrow E_1/p\otimes E_2/p\rightarrow 0.\] 
With a similar argument, by applying the right exact functor $\widehat{E}_1/p\otimes$ to the exact sequence \eqref{ses-2} for $E_2$ we obtain an exact sequence of Mackey functors,
\[\widehat{E}_1/p\otimes \widehat{E}_2/p\rightarrow \widehat{E}_1/p\otimes E_2/p\rightarrow \widehat{E}_1/p\otimes[E_2/\widehat{E}_2]/p\rightarrow 0.\] We claim that $\widehat{E}_1/p\otimes[E_2/\widehat{E}_2]/p=0$. The argument is exactly the same as before, noting that if $F'/F$ is a finite unramified extension, then the norm map on formal groups, $\widehat{E}_1(\mathfrak{m}_{F'})\xrightarrow{N_{F'/F}}\widehat{E}_1(\mathfrak{m}_{F})$ is surjective (cf.\ \cite[Proposition 3.1]{Hazewinkel1974}). 

\end{proof}

\begin{notn}\label{Symb} 
For $X = E_1\times E_2$ we will denote by $\Symb_X(k)$ the subgroup of 
$(E_1\otimes E_2)(k)$   
generated by symbols of the form $\{x,y\}_{k/k}$. 
Moreover, let $\overline{\Symb}_X(k)$ be the image of $\Symb_X(k)$ 
 in  $(E_1\otimes E_2)(k)/p$.  
\end{notn}

\begin{lem}\label{extendunramf} Let $E_1, E_2$ be elliptic curves over a finite  extension $k$ of $\Q_p$. Assume that at least one of the curves has good reduction. Let $k'/k$ be a  finite unramified extension. Then,
\begin{enumerate}
\item We have an inclusion $\overline{\Symb}_X (k)\subseteq N_{k'/k}(\overline{\Symb}_X (k'))$ in  $(E_1\otimes E_2)(k)/p$.
\item If the group  $(E_1\otimes E_2)(k')$   is $p$-divisible, then so is 
 $(E_1\otimes E_2)(k)$.  
\end{enumerate}
\end{lem}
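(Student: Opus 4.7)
The plan is to reduce both parts to a single observation: the projection formula \textbf{(PF1)} combined with surjectivity of the norm $E_i(F') \twoheadrightarrow E_i(F)$ for an arbitrary finite unramified extension $F'/F$ of local fields, provided $E_i$ has good reduction. This surjectivity is \cite[Corollary 4.4]{Mazur1972}, already invoked in the proof of \autoref{formalproduct}. Without loss of generality, assume $E_1$ is the curve with good reduction.

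For part (1), I would start with a generating symbol $\{x,y\}_{k/k}$ with $x\in E_1(k)$, $y\in E_2(k)$, use norm surjectivity to write $x=N_{k'/k}(x')$ for some $x'\in E_1(k')$, and then apply \textbf{(PF1)}:
\[
\{x,y\}_{k/k} = \{N_{k'/k}(x'),y\}_{k/k} = \{x',\res_{k'/k}(y)\}_{k'/k} = N_{k'/k}\bigl(\{x',\res_{k'/k}(y)\}_{k'/k'}\bigr),
\]
where the final identity is the definition of the norm on the Mackey product. The inner symbol lies in $\Symb_X(k')$, so summing over generators and reducing mod $p$ gives $\overline{\Symb}_X(k)\subseteq N_{k'/k}(\overline{\Symb}_X(k'))$.

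For part (2), the plan is to upgrade the same trick to \emph{every} generator $\{x,y\}_{K/k}$ of $(E_1\otimes E_2)(k)$, with $K/k$ finite. Set $F:=K\cdot k'$. Since $F/K$ is the compositum of $K$ with an unramified extension of $k$ it is itself unramified, and $E_1$ still has good reduction over $K$, so \cite[Corollary~4.4]{Mazur1972} again yields $x=N_{F/K}(x')$ for some $x'\in E_1(F)$. Writing $\{x,y\}_{K/k}=N_{K/k}(\{x,y\}_{K/K})$, applying \textbf{(PF1)} inside $(E_1\otimes E_2)(K)$, and using the transitivity $N_{K/k}\circ N_{F/K}=N_{F/k}=N_{k'/k}\circ N_{F/k'}$ produces
\[
\{x,y\}_{K/k} = N_{k'/k}\bigl(\{x',\res_{F/K}(y)\}_{F/k'}\bigr) \in N_{k'/k}\bigl((E_1\otimes E_2)(k')\bigr).
\]
This establishes the stronger identity $(E_1\otimes E_2)(k)=N_{k'/k}\bigl((E_1\otimes E_2)(k')\bigr)$, from which $p$-divisibility follows at once: any $\alpha\in(E_1\otimes E_2)(k)$ is $N_{k'/k}(\gamma)$ for some $\gamma$, and the hypothesis gives $\gamma=p\delta$, so $\alpha=p\,N_{k'/k}(\delta)\in p\,(E_1\otimes E_2)(k)$.

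There is no genuine obstacle. The only conceptual point worth highlighting is that the compositum $K\cdot k'$ with an unramified extension is always unramified over $K$, which is exactly what propagates Mazur's norm surjectivity to every intermediate level $K$ and lets the same projection-formula manipulation cover all generators at once.
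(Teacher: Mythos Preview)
Your proof is correct and follows essentially the same approach as the paper: both parts hinge on Mazur's norm surjectivity for elliptic curves with good reduction over unramified extensions, combined with the projection formula to push symbols up to $k'$ and back down via $N_{k'/k}$. The only cosmetic difference is that in part (2) you establish the integral surjectivity $(E_1\otimes E_2)(k)=N_{k'/k}\bigl((E_1\otimes E_2)(k')\bigr)$ and then deduce $p$-divisibility, whereas the paper works directly modulo $p$ and concludes $\{a,b\}_{F/k}=0$ in $(E_1\otimes E_2)(k)/p$; the underlying computation is identical.
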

\begin{proof} Without loss of generality, assume that $E_1$ has good reduction. To prove (1), let $\{a,b\}_{k/k}\in\overline{\Symb}_X(k)$. Because $k'/k$ is unramified, and $E_1$ has good reduction, the norm map $E_1(k')\xrightarrow{N_{k'/k}} E_1(k)$ is surjective (\cite[Cororally 4.4]{Mazur1972}).
Thus, we may find $a'\in E_1(k')$ such that $a=N_{k'/k}(a')$. This yields equalities,
\[\{a,b\}_{k/k}=\{N_{k'/k}(a'),b\}_{k/k}=\{a',b\}_{k'/k}=N_{k'/k}(\{a',b\}_{k'/k'}).\] 

To prove (2), assume that  $(E_1\otimes E_2)(k')$  is $p$-divisible. Let  $\{a,b\}_{F/k}\in (E_1\otimes E_2)(k)/p$,  where $F/k$ is some finite extension. Consider the extension $F'=F\cdot k'$. Then $F'/F$ is unramified, and hence the norm $E_1(F')\xrightarrow{N_{F'/F}}E_1(F)$  is surjective. Thus there exists some $a'\in E_1(F')$ such that $a=N_{F'/F}(a')$. The projection formula yields (cf.\ \autoref{vanishingtrick}),
\[
 \{a,b\}_{F/k}= \{N_{F'/F}(a'),b\}_{F/k}=\{a',b\}_{F'/k}=N_{k'/k}(\{a',b\}_{F'/k'})=0\in  (E_1\otimes E_2)(k)/p,
 \] and hence  $(E_1\otimes E_2)(k)/p=0$. 

\end{proof}

\subsection{Proof of \autoref{maintheointro}} 
As already mentioned in the introduction, the proof of this theorem  will be split up into two steps. 
Let $k$ be a finite unramified extension of $\Q_p$ 
and let $E_1, E_2$ be elliptic curves over $k$ with good reduction. 
Put $X = E_1\times E_2$. 
Suppose that  $E_1$ has good ordinary reduction. 
In the following, we consider the following two cases: 

\noindent 
(\textbf{ord}) $E_2$ has good ordinary reduction,  

\noindent
(\textbf{ssing}) $E_2$ has good supersingular reduction. 

\smallskip
We can consider the smallest  extension $L/k$  such that the following are true:
\begin{itemize}
\item  $L\supset k(\widehat{E}_1[p],\widehat{E}_2[p])$, 
\item If the curve $E_i$ has good ordinary reduction, then the $G_L$-module $E_{iL}[p]$ fits into a short exact sequence of the form \eqref{ses2}, and hence $\overline{E}_i[p]\subset \overline{E}_i(\F_L)$. 
\end{itemize}
The above assumptions imply that $L\supset k(\mu_p)$ and that $L$ fits into a tower
 of finite extensions 
 \begin{equation}\label{extensionL}
	k\subset k'\subset L 	
 \end{equation}
 with $k'/k$ unramified and $L/k'$ totally ramified of degree coprime to $p$. More precisely, sections \ref{ordinarysection}-\ref{unramified section}  imply the following, 
\begin{itemize}
\item 
In the case (\textbf{ord}), then $L/k$ has ramification index $p-1$, and hence $e_0(L)=1$. 
\item In the case (\textbf{ssing}), 
then $L/k$ has ramification index $p^2-1$, 
and hence $e_0(L)=p+1$. 
\end{itemize}

\begin{theo}\label{main2} Let $L/k'/k$ be the tower of extensions as in \eqref{extensionL}. Then, we have an equality $\overline{\Symb}_{X_{k'}}(k')=  (E_1\otimes E_2)(k')/p$.  
\end{theo}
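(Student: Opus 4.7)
The plan is to exploit the restriction map to the extension $L$, where the Mackey product modulo $p$ has an explicit description from prior work, and to pull information back to $k'$ using that $[L:k']$ is coprime to $p$.

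By \autoref{formalproduct} every element of $(E_1\otimes E_2)(k')/p$ comes from a symbol involving the formal groups $\widehat{E}_1, \widehat{E}_2$, so I may restrict attention to such symbols. By the setup of \eqref{extensionL}, $L/k'$ is totally ramified of degree coprime to $p$, and \autoref{normrestr} then gives that $\res_{L/k'}: (E_1\otimes E_2)(k')/p \hookrightarrow (E_1\otimes E_2)(L)/p$ is injective. Over $L$, the structure of $E_i/p$ is completely understood: in case (\textbf{ord}), \autoref{formalgroup} combined with \eqref{ses4} identifies $\widehat{E}_i/p$ with a subquotient of $\mathbb{G}_m/p$, while in case (\textbf{ssing}), the decomposition \eqref{decompose} gives $E_2(L)/p \simeq \overline{U}_L^{p^2+1}\oplus \overline{U}_L^{p+1}$. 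Together with Kahn's identification in Example~\ref{MFexs}(3) and prior work of Hiranouchi and Gazaki--Leal, this yields that $(E_1\otimes E_2)(L)/p$ is generated by symbols of the form $\{x, y\}_{L/L}$.

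Given $\alpha \in (E_1\otimes E_2)(k')/p$, the identity $N_{L/k'}\circ \res_{L/k'} = [L:k']$ is invertible modulo $p$, so writing $\res_{L/k'}(\alpha) = \sum_i \{x_i, y_i\}_{L/L}$, I obtain $\alpha = [L:k']^{-1}\sum_i \{x_i, y_i\}_{L/k'}$ in $(E_1\otimes E_2)(k')/p$. The heart of the argument is then to show that each $L/k'$-symbol $\{x, y\}_{L/k'}$ can be rewritten modulo $p$ as a sum of $\{x', y'\}_{k'/k'}$-symbols. For this I exploit the Galois action of $\Gal(L/k')$ on $E_i(L)/p$ described in \autoref{sigmaaction} and \autoref{filtration3}, which acts on the graded pieces $\overline{U}_L^j/\overline{U}_L^{j+1}$ by explicit scalar multiplication by a power of $\sigma(\pi_L)/\pi_L$. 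Since $L/k'$ is tame, $\F_L = \F_{k'}$, so Teichm\"uller lifts identify the corresponding generators with classes coming from $k'$; combining this with the vanishing trick of \autoref{vanishingtrick} and the projection formula \eqref{projectionformula} allows the successive rewriting.

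The principal obstacle is this descent step: while $(E_1\otimes E_2)(L)/p$ is generated by $L/L$-symbols, the individual symbols $\{x, y\}_{L/k'}$ appearing above have points defined genuinely over $L$, and one must carefully exploit the cyclic, tame Galois structure of $\Gal(L/k')$ to descend them. This calls for separate analyses of the ordinary and supersingular settings: in the ordinary case the Kummer-theoretic identification of $\widehat{E}_i/p$ with $\mathbb{G}_m/p$ reduces the problem essentially to the corresponding question for Milnor $K$-theory over tame extensions (via the equivalences recorded in \eqref{Tate}), while in the supersingular case both summands of \eqref{decompose} must be handled and the graded analysis of Section~\ref{unramified section} is needed to control which unit classes survive modulo $p$ after restriction.
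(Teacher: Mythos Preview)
Your outline identifies the right ingredients---restriction to $L$, the fact that $(E_1\otimes E_2)(L)/p$ is generated by $L/L$-symbols, and the need to descend via the norm and projection formula---but you have not actually carried out the step you yourself call ``the heart of the argument.'' Saying that the Galois action is by explicit scalars on graded pieces, and that Teichm\"uller lifts relate generators to $k'$-classes, does not yet give a mechanism for rewriting $\{x,y\}_{L/k'}$ as a sum of $k'/k'$-symbols. In fact the Teichm\"uller remark is a red herring: the residue fields of $L$ and $k'$ coincide, but that by itself does not descend points of $\widehat{E}_i(L)$.

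The paper's proof makes this descent concrete via two different mechanisms, and you should see why both are needed. In the (\textbf{ord}) case and for the summand $(\overline{U}^0\otimes\overline{U}^p)(L)$ in the (\textbf{ssing}) case, one shows directly that the cyclic group $(\overline{U}^0\otimes\overline{U}^j)(L)\simeq\Z/p$ admits a generator $\{x,y\}_{L/L}$ with one coordinate already in $\widehat{E}_i(k')$; this is done by taking $x\in\widehat{E}_i^1(k')\setminus\widehat{E}_i^2(k')$, computing where $\res_{L/k'}(x)$ lands in the filtration, and checking via \autoref{jump} and local class field theory that a suitable $y$ exists making the symbol nonzero. Projection formula then gives $N_{L/k'}(\{x,y\}_{L/L})=\{x,N_{L/k'}(y)\}_{k'/k'}$. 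For the remaining summand $(\overline{U}^0\otimes\overline{U}^{p^2})(L)$ in the (\textbf{ssing}) case, this fails: no generator has a $k'$-rational coordinate. Instead one proves that $N_{L/k'}$ vanishes on this summand, by computing $\res_{L/k'}\circ N_{L/k'}=\sum_{r}\sigma^r$ on a generator and using \autoref{filtration3} to see this sum equals $(\sum_r\overline{u}^{r(i+pj)})\cdot\{a,b\}_{L/L}$; the key numerical input is that $i+j=p$ forces $i+pj<p^2-1$, so $\overline{u}^{i+pj}\neq 1$ and the geometric sum vanishes. Your proposal gestures at the Galois-action ingredients but does not isolate this dichotomy, nor the arithmetic inequality that makes the second mechanism work.
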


\begin{proof} For simplicity we assume that the extension $L/k$ is totally ramified, and hence $k=k'$. 
We need to show that the group  $(E_1\otimes E_2)(k)/p$  is generated by symbols of the form $\{a,b\}_{k/k}$. 
Since $[L:k]$ is coprime to $p$, the norm map 
$(E_1\otimes E_2)(L)/p \xrightarrow{N_{L/k}} (E_1\otimes E_2)(k)/p$  
 is surjective.
It is enough therefore to show that the image of the norm lies in $\overline{\Symb}_X(k)$.

\smallskip
\noindent
\textit{Claim 1:}  $\overline{\Symb}_{X_L}(L) =(E_1\otimes E_2)(L)/p$.   That is, the group  $(E_1\otimes E_2)(L)/p$  is generated by symbols of the form $\{x,y\}_{L/L}$, with $x\in E_1(L)$, $y\in E_2(L)$. 

This claim follows easily by the computations in \cite{Hiranouchi2014} 
as follows:
Using \autoref{formalproduct}, 
the problem is reduced to showing that the Mackey product $(\widehat{E}_1/p\otimes \widehat{E}_2/p)(L)$ is generated by symbols of the form  $\{x,y\}_{L/L}$, with $x\in \widehat{E}_1(L)$, $y\in \widehat{E}_2(L)$. We have the following subcases. 

\begin{itemize}
\item In the case (\textbf{ord}),  \autoref{formalgroup} gives for $i=1,2$ an isomorphism 
$\widehat{E}_i/p\simeq \overline{U}^0$ of Mackey functors over $L$. This implies an isomorphism of abelian groups, 
\[(\widehat{E}_1/p\otimes \widehat{E}_2/p)(L)\simeq(\overline{U}^0\otimes\overline{U}^0)(L).\] 
\item In the case (\textbf{ssing}), \eqref{decompose} gives an isomorphism 
$\widehat{E}_2/p\simeq \overline{U}^{p^2}\oplus \overline{U}^{p}$ of Mackey functors over $L$,
and \cite[Lemma 3.3]{Hiranouchi2014} gives,  
\[(\widehat{E}_1/p\otimes \widehat{E}_2/p)(L)\simeq(\overline{U}^0\otimes\overline{U}^{p^2})(L)\oplus(\overline{U}^0\otimes\overline{U}^{p})(L).\] 
\end{itemize}
The argument discussed in 
Proposition 3.11 of \cite{Hiranouchi2014} gives that these Mackey products $(\overline{U}^0\otimes \overline{U}^j)(L)$ are generated by symbols defined over $L$. Thus, Claim 1 holds. 



\smallskip
By \cite[Theorem 3.6]{Hiranouchi2014}, 
for each $j< pe_0(L)$, 
the composition 
\begin{equation}
\label{product}	
(\overline{U}^0\otimes \overline{U}^j)(L) \xrightarrow{\iota}  
(\mathbb{G}_m\otimes\mathbb{G}_m)(L)/p \xrightarrow{s_p}
	  H^2(L,\mu_p^{\otimes 2}) \simeq \Z/p,
\end{equation}
is bijective, 
where 
$\iota$ is induced by the maps  $\overline{U}^i \hookrightarrow \mathbb{G}_m/p$, and 
$s_p$ is the Galois symbol map (cf.\ \eqref{classical symbol}). 
By \eqref{Tate}, for a symbol $\{x,y\}_{L/L}$ in $(\overline{U}^0\otimes \overline{U}^j)(L)$
\begin{equation}
	\label{generator}
\mbox{$\{x,y\}_{L/L}$ generates $(\overline{U}^0\otimes \overline{U}^j)(L)$} \Leftrightarrow  x \not\in N_{L_1/L}(L_1^{\times}),
\end{equation}
where 
$L_1 = L(\sqrt[p]{y})$.

\smallskip
\noindent
\textit{Claim 2:} Suppose we are in the situation (\textbf{ord}). Then,  the Mackey product $(E_1\otimes E_2)(L)/p$  is generated by symbols of the form $\{x,y\}_{L/L}$ with either $x\in E_1(k)$, or $y\in E_2(k)$. 

It is enough to show that the group $(\widehat{E}_1/p\otimes \widehat{E}_2/p)(L)\simeq(\overline{U}^0\otimes\overline{U}^{0})(L)$ is generated by a symbol $\{x,y\}_{L/L}$ with $x\in \widehat{E}_1(k)$. Take any $x\in \widehat{E}_1^1(k)\setminus \widehat{E}_1^2(k)$. Then its restriction in $\widehat{E}_1(L)$ lies in $\widehat{E}_1^{p-1}(L)\setminus\widehat{E}_1^{p}(L)$. In particular, we can view $x$ as an element of $\overline{U}_L^{p-1}\setminus\overline{U}_L^{p}$. 
The extension $L_1 = L(\sqrt[p]{x})/L$ is totally ramified extension of degree $p$ with the jump at $s = pe_0(L) - (p-1) = 1$ (cf.\ \autoref{jump} ($a$)). 
Then the isomorphism $U_L/N_{L_1/L}(U_{L_1}) \simeq L^{\times}/N_{L_1}(L_1^{\times})$ of order $p$
(cf.\ \cite[V.3, Corollary 7]{serre1979local})  gives us that there exists some 
$y\in U_L$ such that $y \not \in N_{L_1/L}(U_{L_1})$. By \eqref{generator}, the symbol $\{x,y\}_{L/L}$ generates the group $(\overline{U}^0 \otimes \overline{U}^0)(L) \simeq (\widehat{E}_1/p\otimes \widehat{E}_2/p)(L)$.  
 
The above computation guarantees that in the case (\textbf{ord}), 
$(E_1\otimes E_2)(k)/p$ 
coincides with $\overline{\Symb}_X(k)$.
For, find elements $\{x_i,y_i\}_{L/L}\in  (E_1\otimes E_2)(L)/p$  that generate this group and are such that $x_i\in E_1(k)$. Then $N_{L/k}(\{x_i,y_i\}_{L/L})$ generate  $(E_1\otimes E_2)(L)/p$.  But the projection formula yields,
\[N_{L/k}(\{x_i,y_i\}_{L/L})=\{x_i,N_{L/k}(y_i)\}_{k/k}\in\overline{\Symb}_X(k).\] 
This completes the proof in the case (\textbf{ord}). 

Lastly, we consider the case (\textbf{ssing}), namely, $E_1$ has good ordinary and $E_2$ has good supersingular reduction (hence $e_0(L)=p+1$). In this case we have a commutative diagram with exact rows and surjective vertical maps,
\[\begin{tikzcd}
	(\overline{U}^0\otimes\overline{U}^{p^2})(L)\oplus(\overline{U}^0\otimes\overline{U}^{p})(L)\ar{r} \ar{d}{N_{L/k}} &  (E_1\otimes E_2)(L)/p \ar{r}\ar{d}{N_{L/k}} & 0\\
	(\widehat{E}_1/p\otimes \widehat{E}_2/p)(k)\ar{r}  &  (E_1\otimes E_2)(k)/p \ar{r} & 0.
	\end{tikzcd}\] 
	 With a similar argument as in case (\textbf{ord}) we can show that the subgroup $(\overline{U}^0\otimes\overline{U}^{p})(L)$ of $(\widehat{E}_1/p\otimes \widehat{E}_2/p)(L)$ is generated by a symbol $\{x,y\}_{L/L}$ with $y\in \widehat{E}_2^1(k)$. 
	 Namely, let $y\in \widehat{E}_2^1(k)\setminus\widehat{E}_2^2(k)$.  Then by \autoref{resdec} (1) the image of $y$ in $E_2(L)/p$ decomposes as $(y_0,y_1)$, with $y_0\in\overline{U}_L^{p^2+p}$, and $y_1\in \overline{U}_L^{p+p^2-1}\setminus\overline{U}^{p+p^2}_L$. Since the extension $L(\sqrt[p]{y_1})/L$ is totally ramified of degree $p$ (cf.\ \autoref{jump} ($a$)), by \cite[V.3, Corollary 7]{serre1979local} we can find some $x\in\overline{U}_L^0$ such that $\{x,y_1\}_{L/L}\neq 0\in (\overline{U}^0\otimes\overline{U}^{p})(L)$. Viewing $x$ as an element of $E_1(L)/p$, we have that $\{x,y\}_{L/L}$ generates the subgroup $(\overline{U}^0\otimes\overline{U}^{p})(L)\subset (\widehat{E}_1/p\otimes \widehat{E}_2/p)(L)$. 
	 Notice that such an argument won't work for the other subgroup of $(\widehat{E}_1/p\otimes \widehat{E}_2/p)(L)$. Namely, no matter which symbol $\{x,y\}_{L/L}$ we take with either $x\in \widehat{E}_1(k)/p$ or $y\in \widehat{E}_2(k)/p$, the coordinate $\{x,(y_0,1)\}_{L/L}$ will always vanish. For, if $y\in\widehat{E}_2(k)/p$ this follows immediately from \autoref{resdec}. On the other hand, if $x\in\widehat{E}_1(k)/p$, then $x\in \widehat{E}_1^{p^2-1}(L)/p$. An easy computation shows that for $y=(y_0,y_1)\in \widehat{E}_2(L)/p$, and $L'=L(\sqrt[p]{y_0})$, it follows that $x\in N_{L'/L}(L^{'\times})$. We suggest the following claim instead. 

\smallskip
\noindent
\textit{Claim 3:} The norm map $(\overline{U}^0\otimes\overline{U}^{p^2})(L)\xrightarrow{N_{L/k}}(\widehat{E}_1/p\otimes \widehat{E}_2/p)(k)$ is zero. 

To prove this claim let $\{a,b\}_{L/L}$ be a generator of  $(\overline{U}^0\otimes\overline{U}^{p^2})(L)\simeq\Z/p$. For notational simplicity we will identify this symbol with its image, $\{a,(b,1)\}_{L/L}\in(\widehat{E}_1/p\otimes \widehat{E}_2/p)(L)$. We will show $N_{L/k}(\{a,b\}_{L/L})=0$. Since the restriction  $\res_{L/k}$ is injective, it suffices to show \[\res_{L/k}(N_{L/k}(\{a,b\}_{L/L}))=0\in(\overline{U}^0\otimes\overline{U}^{p^2})(L).\] We first obtain some information about the generator $\{a,b\}_{L/L}$. 

As noted in \eqref{generator}, the symbol $\{a,b\}_{L/L}$ is a generator of  $(\overline{U}^0\otimes\overline{U}^{p^2})(L)$ if and only if $a\not\in N_{L_1/L}(L_1^\times)$, where $L_1=L(\sqrt[p]{b})$.
 Suppose that $a\in\overline{U}_L^i\setminus\overline{U}_L^{i+1}$, and $b\in\overline{U}_L^{p^2+j}\setminus\overline{U}_L^{p^2+j+1}$. 
By \cite[Lemma 3.4 (ii)]{Hiranouchi2014} we may assume that $i$ is coprime to $p$, $i<p^2+p$ and $1\leq j<p$, otherwise $\{a,b\}_{L/L}\neq 0$, and hence $\{a,b\}_{L/L}$ is not a generator of $(\overline{U}^0\otimes\overline{U}^{p^2})(L)$. Moreover, by \cite[V.3, Corollary 7]{serre1979local} we obtain the following equivalence 
\[\{a,b\}_p\neq 0\Leftrightarrow i+p^2+j=pe_0(L)=p^2+p\Leftrightarrow i+j=p.\]
 Let $G_0=\Gal(L/k)$. Note that $L=k(\widehat{E}_2[p])$, and hence $G_0$ is a cyclic group of order $p^2-1$ (see Section~\ref{unramified section} for details). Let $\sigma$ be a generator of $G_0$. Then \autoref{restriction} yields, 
\[\res_{L/k}(N_{L/k}(\{a,b\}_{L/L}))=\sum_{r=0}^{p^2-2}\sigma^r(\{a,b\}_{L/L}).\] 
Set $V=(\overline{U}^0\otimes\overline{U}^{p^2})(L)$. 
Using the previous remarks, the symbol $\{a,b\}_{L/L}$ is in the image of 
 the symbol map $\overline{U}_L^i \otimes\overline{U}_L^{p^2+j}\rightarrow V$ defined by $x\otimes y \mapsto \{x,y\}_{L/L}$. 
 This map factors through $\overline{U}_L^i/\overline{U}_L^{i+1}\otimes\overline{U}_L^{p^2+j}/\overline{U}_L^{p^2+j+1}$, inducing a symbol map
\begin{equation}\label{symbolmap}
\overline{U}_L^i/\overline{U}_L^{i+1}\otimes\overline{U}_L^{p^2+j}/\overline{U}_L^{p^2+j+1}\rightarrow V.
\end{equation}
 In fact, for any $x \in \overline{U}_L^l, y \in \overline{U}_L^{p^2+l'}$ with $l+l'= p+1$, 
	$s_p(\{x,y\}_{L/L}) = (x,y)_p = 0$ by \cite[Lemma 3.4]{Hiranouchi/Hirayama2013} as $l+p^2+l' = p^2 + p+1 > pe_0(L)$. This implies $\{x,y\}_{L/L} = 0$.
Let $\overline{\sigma}$ be the endomorphism of 
$\overline{U}_L^i/\overline{U}_L^{i+1}\otimes\overline{U}_L^{p^2+j}/\overline{U}_L^{p^2+j+1}$ induced by $\sigma$.
Fix a uniformizer $\pi_L$ of $L$. Then \autoref{filtration3}  yields the following equality in $\overline{U}_L^i/\overline{U}_L^{i+1}\otimes\overline{U}_L^{p^2+j}/\overline{U}_L^{p^2+j+1}$,
\[\overline{\sigma}(\overline{a}\otimes\overline{b}) 
=\overline{\sigma}(\overline{a})\otimes \overline{\sigma}(\overline{b})
=\left(\frac{\sigma(\pi_L)}{\pi_L}\right)^{i}\overline{a}\otimes \left(\frac{\sigma(\pi_L)}{\pi_L}\right)^{pj}\overline{b} 
=\overline{u}^i\overline{a}\otimes \overline{u}^{pj}\overline{b} ,
\] 
where $\overline{u}=\sigma(\pi_L)/\pi_L\in\F_L^\times$. Note that since $\sigma$ has exact order $p^2-1$, $\overline{u}$ lies in the (possibly smaller) field $\F_{p^2}$. 
The next claim is that we have an equality, $\overline{u}^i\overline{a}\otimes \overline{u}^{pj}\overline{b}=\overline{u}^{i+pj}\overline{a}\otimes \overline{b}$. This follows by \autoref{units}. Namely, we have isomorphisms $\overline{U}_L^i/\overline{U}_L^{i+1}\simeq\F_L$, and $\overline{U}_L^{p^2+j}/\overline{U}_L^{p^2+j+1}\simeq\F_L$. Thus, the group $\overline{U}_L^i/\overline{U}_L^{i+1}\otimes\overline{U}_L^{p^2+j}/\overline{U}_L^{p^2+j+1}$ becomes an $\F_L$-vector space by defining for $c\in\F_L$ and $\overline{x}\otimes \overline{y} \in \overline{U}_L^i/\overline{U}_L^{i+1}\otimes\overline{U}_L^{p^2+j}/\overline{U}_L^{p^2+j+1}$,
\[
c\cdot\overline{x}\otimes \overline{y}
= (c\overline{x})\otimes \overline{y}
=\overline{x}\otimes (c\overline{y}).
\]  
We conclude that $\overline{\sigma}$ acts as follows,
\[
\overline{\sigma}(\overline{a})\otimes \overline{\sigma}(\overline{b}) 
= \overline{u}^{i+pj}\overline{a}\otimes \overline{b}.
\]  Since $i+j=p$, and both $i,j$ are coprime to $p$, we have $1\leq j\leq p-1$, and hence $pj\leq p^2-p$. This gives  \[i+pj\leq p^2-p+p-1=p^2-1.\] In order for the inequality to become an equality we need both $j=p-1$ and $i=p-1$. But this is not true, since $i+j=p$. Thus, $i+pj<p^2-1$. Set $\overline{v}=\overline{u}^{i+pj}\in\F_{p^2}$. The above inequality implies that $\overline{v}\neq 1$. At the same time $\overline{v}^{p^2-1}=1$, which means that $\overline{v}$ is a root of the polynomial $f(x)=x^{p^2-2}+x^{p^2-3}+\cdots+x+1\in\F_{p^2}[x]$. We then can compute,
\[
\sum_{r=0}^{p^2-2}\overline{\sigma}^r(\overline{a}\otimes \overline{b})
=\left(\sum_{r=0}^{p^2-2}\overline{v}^r\right)\overline{a}\otimes\overline{b}
=0 \quad \mbox{in}\ \overline{U}_L^i/\overline{U}_L^{i+1}\otimes\overline{U}_L^{p^2+j}/\overline{U}_L^{p^2+j+1}.
\] 
Then \eqref{symbolmap} yields a vanishing $\res_{L/k}(N_{L/k}(\{a,b\}_{L/L})=0$, which completes the proof. 

\end{proof}
The next theorem completes the proof of \autoref{maintheointro}. 

\begin{theo}\label{main1} 
Let  $k$ be a finite unramified extension of $\Q_p$. Let $E_1, E_2$ be elliptic curves over $k$  with good reduction. 
Assume that $E_1$ has good ordinary reduction.   
Then, the Mackey product $(E_1\otimes E_2)(k)$ is $p$-divisible. 
In particular, the same holds for $K(k;E_1,E_2)$. 
\end{theo}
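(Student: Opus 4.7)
The plan is to combine \autoref{main2} with Lemma~\ref{extendunramf} and a restriction argument to the field $L$, reducing everything to an explicit filtration computation in the formal group. First, by Lemma~\ref{extendunramf}~(2), $p$-divisibility of $(E_1\otimes E_2)(k)$ follows from $p$-divisibility of $(E_1\otimes E_2)(k')$, where $k'$ is the unramified sub-extension of the tower $k\subset k'\subset L$ in \eqref{extensionL}. Replacing $k$ by $k'$, I may therefore assume that $L/k$ is \emph{totally ramified} of degree $p-1$ (case \textbf{ord}) or $p^2-1$ (case \textbf{ssing}), both coprime to $p$. Then \autoref{main2} reduces the goal to showing that every symbol $\{a,b\}_{k/k}$ with $a\in E_1(k)$, $b\in E_2(k)$ is zero in $(E_1\otimes E_2)(k)/p$.

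Next I would exploit the injectivity of the restriction $\res_{L/k}\colon (E_1\otimes E_2)(k)/p\hookrightarrow (E_1\otimes E_2)(L)/p$, which holds because $[L:k]$ is coprime to $p$ (cf.\ \autoref{normrestr}). By \autoref{restriction}~(1), $\res_{L/k}(\{a,b\}_{k/k})=\{a,b\}_{L/L}$, so it suffices to verify the vanishing of these symbols in $(E_1\otimes E_2)(L)/p$. Via \autoref{formalproduct}, this group is a quotient of $(\widehat{E}_1/p\otimes \widehat{E}_2/p)(L)$, and the argument used to prove that lemma (finding preimages over suitable unramified extensions and applying the projection formula) allows me to replace $a,b$ by their formal-group components $\bar a\in \widehat{E}_1(k)/p$, $\bar b\in \widehat{E}_2(k)/p$ without loss of generality.

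The core is then a filtration calculation. Since $L/k$ is totally ramified of ramification index $e(L/k)$, the restriction maps $\widehat{E}_i^1(k)$ into $\widehat{E}_i^{e(L/k)}(L)$. In case \textbf{ord}, \autoref{formalgroup} identifies both factors with $\overline{U}^0_L$, and $\bar a,\bar b$ land in $\overline{U}_L^{p-1}$; pairing filtrations satisfies $(p-1)+(p-1)=2(p-1)>p=pe_0(L)$ for odd $p$, so by \eqref{product}--\eqref{generator} combined with the Hiranouchi--Hirayama vanishing criterion (or directly the fact that $(x,y)_p=0$ in $K_2^M(L)/p$ once the filtration sum exceeds $pe_0(L)$), the symbol is zero. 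In case \textbf{ssing}, I decompose $\bar b$ according to \eqref{decompose} as $(b_0,b_1)\in \overline{U}_L^{p^2+1}\oplus \overline{U}_L^{p+1}$; by \autoref{resdec}~(1), $b_0\in \overline{U}_L^{p^2+p}$ and $b_1\in \overline{U}_L^{p^2+p-1}$, while $\bar a$ sits in $\overline{U}_L^{p^2-1}$. Then both pairings have filtration sum $\geq 2p^2+p-2>p^2+p=pe_0(L)$, so each component vanishes.

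The main obstacle is the careful bookkeeping in the supersingular case: the two-component decomposition of $\widehat{E}_2(L)/p$ and the need to verify, component by component, that the sum of $\overline{U}$-filtration indices strictly exceeds $pe_0(L)$. The assumption that $k/\Q_p$ is unramified (so $e_k=1$) and that $p$ is odd enter here precisely to make the strict inequalities hold; for $p=2$ one would have equality $2(p-1)=p$ in case \textbf{ord}, which is exactly why the theorem as stated excludes $p=2$.
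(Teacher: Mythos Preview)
Your proof is correct and follows essentially the same architecture as the paper: reduce to $k=k'$ via \autoref{extendunramf}~(2), invoke \autoref{main2} to reduce to symbols over $k$, pass to formal-group components via \autoref{formalproduct}, restrict to $L$, and then verify vanishing there using the unit identifications from \autoref{formalgroup} and \eqref{decompose} together with \autoref{resdec}.

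The one genuine difference lies in how you carry out the final vanishing step. The paper works through the vanishing trick explicitly: it computes the ramification jump of the relevant Kummer extension (via \autoref{jump} and \autoref{resdec}) and then invokes \autoref{psi} to exhibit the other coordinate as a norm. You instead appeal directly to the Hilbert-symbol filtration criterion $(x,y)_p=0$ once $i+j>pe_0(L)$ (as used in the paper only inside the proof of \autoref{main2}, Claim~3, citing \cite[Lemma~3.4]{Hiranouchi/Hirayama2013}), combined with the bijectivity of \eqref{product}. Your route is cleaner---it collapses the paper's case distinction $v_k(a)=1$ versus $v_k(a)>1$ in case (\textbf{ord}) into a single inequality $2(p-1)>p$, and likewise avoids separate treatment of the unramified piece $b_0$ in case (\textbf{ssing}). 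The paper's route, on the other hand, is more self-contained and makes the mechanism (norm surjectivity) transparent without invoking the external Hilbert-symbol bound. Both are valid; yours also makes the failure at $p=2$ visible as the borderline equality $2(p-1)=p$, which is a nice observation.
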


\begin{proof}  
Using \autoref{extendunramf} (2), we may assume $k = k'$, 
where $k'/k$ is as in \eqref{extensionL}. 
By \autoref{main2}, it is enough to show 
 the vanishing of $\overline{\Symb}_X(k)$. 
 Notice that throughout the proof of \autoref{main2}, we used the Mackey product $(\widehat{E}_1\otimes\widehat{E}_2)(k)/p$. It is therefore enough to show that $\{a,b\}_{k/k}=0$, for every $a\in\widehat{E}_1(k)/p$ and $b\in\widehat{E}_2(k)/p$. Consider the finite extension $L/k$ as in \eqref{extensionL}, which is totally ramified of degree $p^2-1$ or $p-1$, depending on whether the curve $E_2$ has good supersingular reduction or not. 

\smallskip
\noindent
\textit{Case} (\textbf{ord}):
\textit{When both elliptic curves have good ordinary reduction.}
In this case we have $e_0(L)=1$. We consider the restriction map, $(\widehat{E}_1\otimes \widehat{E}_2)(k)/p\xrightarrow{\res_{L/k}}(\widehat{E}_1\otimes \widehat{E}_2)(L)/p$. Since the extension $L/k$ is of degree coprime to $p$, $\res_{L/k}$ is injective.  It is therefore enough to show that $\res_{L/k}(\{a,b\}_{k/k})=\{a,b\}_{L/L}=0$, for every $a\in\widehat{E}_1(k)/p$ and $b\in\widehat{E}_2(k)/p$.
Using the identification $\widehat{E}_i(L)/p\simeq \overline{U}_L^0$ for $i=1,2$ (\autoref{formalgroup}), 
we will show $\{a,b\}_{L/L} = 0$ in $(\overline{U}^0\otimes \overline{U}^0)(L)$.
Using the vanishing trick \eqref{vanishingtrick}, it is enough to show that 
$b\in\img\left(\overline{U}^0_{L_1}\xrightarrow{N_{L_1/L}} \overline{U}^0_L\right)$ for $L_1 = L(\sqrt[p]{a})$.
Since $a\in\widehat{E}_1^1(k)$, its restriction in $\widehat{E}_1(L)$ lies in $\widehat{E}^{p-1}_1(L)$, and the same holds for $b$. Thus in order to calculate the symbol $\{a,b\}_{L/L}$, we may view $a,b$ as units in $\overline{U}_L^{p-1}\subset\overline{U}_L^0$. We distinguish the following two cases. First, if $v_k(a)=1$, that is, $a\in\widehat{E}^1_1(k)\setminus\widehat{E}_1^2(k)$, then its restriction over $L$ lies in $\overline{U}_L^{p-1}\setminus\overline{U}_L^p.$ In this case \autoref{jump} ($a$) gives us that $L_1/L$ is a totally ramified degree $p$ extension and its Galois group $\Gal(L_1/L)$ has jump in its ramification filtration at $s=pe_0(L)-p+1=1$. Since $b\in\overline{U}_L^{p-1}$, and $p-1>s$, \autoref{psi} (2) gives a surjection $\overline{U}_{L_1}^{\psi(p-1)}\twoheadrightarrow\overline{U}_L^{p-1}$, 
where $\psi := \psi_{L_1/L}$. Since $\overline{U}_{L_1}^{\psi(p-1)}\subset\overline{U}_{L_1}^0$ and $\widehat{E}_2(L_1)/p\simeq\overline{U}_{L_1}^0$, we conclude that $b$ is in the image of the norm map, and hence  $\{a,b\}_{L/L}=0$. Second, if $v_k(a)>1$, then its restriction over $L$ lies in $\overline{U}_L^p\subset\overline{U}_L^0$. In this case the extension $L_1/L$ is unramified (cf.\ \autoref{jump} ($b$)), and hence the norm map 
$U_{L_1}\xrightarrow{N_{L_1/L}}U_L$ 
is surjective. This completes the proof for the case of two elliptic curves with good ordinary reduction. 

\smallskip
\noindent
\textit{Case} (\textbf{ssing}): \textit{When  $E_2$ has good supersingular reduction.} 
In this case we have $[L:k]=p^2-1$, and $e_0(L)=p+1$. 
The argument is analogous to the previous case. Namely, let $\{a,b\}_{k/k}\in(\widehat{E}_1\otimes \widehat{E}_2)(k)/p$. Since the extension $L/k$ is of degree coprime to $p$, in order to show that $\{a,b\}_{k/k}=0$, it is enough to show that $\{a,b\}_{L/L}=0$. Since $b\in\widehat{E}_2(k)$, $v_L(b)\geq p^2-1$. Let $F=L\left(\frac{1}{p}b\right)$. We will show that $a\in N_{F/L}(\widehat{E}_1(F)/p)$. \autoref{resdec} allows us to reduce to the case when $b\in\widehat{E}^{p^2-1}_2(L)\setminus\widehat{E}^{p^2}_2(L)$. Then \autoref{resdec} (1) gives us that $b$ decomposes as $(b_0,b_1)$ under the decomposition \eqref{decompose}, the extension $F_0=L(\sqrt[p]{b_0})$ is unramified over $L$, while the extension $F_1=L(\sqrt[p]{b_1})/L$ is totally ramified of degree $p$ and the jump in the ramification filtration of $\Gal(L(\sqrt[p]{b_1})/L)$ happens at $s=1$. We may write,
 \[\{a,b\}_{L/L}=\{a,(b_0,1)\}_{L/L}+\{a,(1,b_1)\}_{L/L}.\] We claim that both these symbols are zero. The first one vanishes, since the norm map $E_1(F_0)\xrightarrow{N_{F_0/L}} E_1(L)$ is surjective. For the second symbol, we can identify the restriction of $a$ over $L$ with a unit $a\in\overline{U}_L^{p^2-1}$. Since $p^2-1>s=1$, \autoref{psi} gives us that $a\in N_{F_1/L}(F_1^\times)$, and hence $a\in N_{F_1/L}(\widehat{E}_1(F_1)/p)$. 

\end{proof}

\subsection{The case of two elliptic curves with good supersingular reduction}\label{sscase} In this subsection we focus on the only case which is not covered in \autoref{maintheointro}; namely when $X=E_1\times E_2$ is a product of two elliptic curves with good supersingular reduction. This case appears to be much harder than all others, and  we can only obtain a partial result. Our first computation shows that a weaker form of \autoref{main1} is still true in this case.

\begin{theo}\label{ssing1} Let $X=E_1\times E_2$ be a product of two elliptic curves with good supersingular reduction over an unramified extension $k$ of $\Q_p$. Then, we have $\overline{\Symb}_X(k)=0$. 
\end{theo}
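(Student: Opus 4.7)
The plan is to mirror the case (\textbf{ssing}) argument in the proof of \autoref{main1}, now handling nontrivial Mackey decompositions on both factors. First, \autoref{formalproduct} reduces the statement to the vanishing of symbols $\{a,b\}_{k/k}$ with $a\in\widehat{E}_1(k)/p$ and $b\in\widehat{E}_2(k)/p$. Let $L$ be a finite extension of $k$ containing $k(\widehat{E}_1[p],\widehat{E}_2[p])$. Since $k/\mathbb{Q}_p$ is unramified and both curves have supersingular reduction, the inertia acts on each $\widehat{E}_i[p]$ through the cyclic group $\mathbb{F}_{p^2}^\times$ of order $p^2-1$ (see Section \ref{unramified section}); after an unramified enlargement of $k$ permitted by \autoref{extendunramf}(2), we may arrange $[L:k]$ coprime to $p$, so that $\res_{L/k}$ is injective (\autoref{normrestr}) and the problem reduces to showing $\{a,b\}_{L/L}=0$.

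Over $L$ we have $e_0(L)=p+1$ and $t_0(L)=1$, and the Mackey decomposition \eqref{MF} gives $\widehat{E}_i(L)/p\simeq\overline{U}_L^{p^2+1}\oplus\overline{U}_L^{p+1}$ for $i=1,2$. If either $a\in\mathcal{D}_k^2$ or (symmetrically) $b\in\mathcal{D}_k^2$, then \autoref{resdec}(2) makes the offending element $p$-divisible over $L$, so $\{a,b\}_{L/L}=0$ at once. Otherwise $v_k(a)=v_k(b)=1$, and \autoref{resdec}(1) places the images of $a,b$ as pairs $(a_0,a_1)$, $(b_0,b_1)$ with $a_0,b_0\in\overline{U}_L^{p^2+p}$ and $a_1,b_1\in\overline{U}_L^{p+p^2-1}\setminus\overline{U}_L^{p+p^2}$. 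Bilinearity then expands $\{a,b\}_{L/L}$ into four cross-symbols $\{a_i,b_j\}_{L/L}$, one in each of the Mackey products $(\overline{U}^{p^2}\otimes\overline{U}^{p^2})(L)$, $(\overline{U}^{p^2}\otimes\overline{U}^{p})(L)$, $(\overline{U}^{p}\otimes\overline{U}^{p^2})(L)$, and $(\overline{U}^{p}\otimes\overline{U}^{p})(L)$.

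Each cross-symbol is then killed via the vanishing trick \autoref{vanishingtrick} applied to the auxiliary extension $L_1=L(\sqrt[p]{b_j})$. When $j=0$, \autoref{jump}(b) shows $L_1/L$ is unramified of degree $p$ and the formal-group norm $\widehat{E}_1(L_1)\to\widehat{E}_1(L)$ is surjective by \cite[Corollary 4.4]{Mazur1972}, so $a_i$ lifts to a norm in the relevant Mackey summand. When $j=1$, \autoref{jump}(a) shows $L_1/L$ is totally ramified of degree $p$ with ramification jump $s=1$, and a direct application of \autoref{psi}(2) with the Herbrand function $\psi(i)=pi-p+1$ yields $\overline{U}_L^{p^2+p}\subseteq N_{L_1/L}(\overline{U}_{L_1}^{p^3})$ and $\overline{U}_L^{p+1}\subseteq N_{L_1/L}(\overline{U}_{L_1}^{p^2})$, so that $a_i$ is a norm from the corresponding summand $\overline{U}^{p^2}(L_1)$ or $\overline{U}^{p}(L_1)$ of $\widehat{E}_1(L_1)/p$. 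In every case the projection formula produces $\{a_i,b_j\}_{L/L}=N_{L_1/L}\{a_i',\res_{L_1/L}(b_j)\}_{L_1/L_1}=0$, since $b_j$ becomes a $p$-th power in $L_1$. Summing, $\{a,b\}_{L/L}=0$, and injectivity of $\res_{L/k}$ completes the proof. The principal obstacle is this careful tracking of norm surjectivity through the two-piece decomposition $\widehat{E}_1/p\simeq\overline{U}^{p^2}\oplus\overline{U}^{p}$ of the supersingular curve---more delicate than the ordinary case of \autoref{main1} where the analogous factor was simply $\overline{U}^0$---and a minor secondary subtlety arises when $k(\widehat{E}_1[p])\neq k(\widehat{E}_2[p])$ and the compositum $L$ carries ramification index a proper multiple of $p^2-1$, in which case the filtration indices scale uniformly while the vanishing inequality $v_L(a_i)+v_L(b_j)>pe_0(L)$ remains valid.
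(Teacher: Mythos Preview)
Your proof is correct and follows the paper's approach step for step: reduce to formal-group symbols via \autoref{formalproduct}, pass to $L$ where the decomposition \eqref{MF} is available, use \autoref{resdec} to write $a=(a_0,a_1)$ and $b=(b_0,b_1)$, expand into four cross-symbols, and kill each with the vanishing trick using either unramified norm surjectivity (for the pieces involving $a_0$ or $b_0$) or the Herbrand estimate $\psi(e(p^2+p-1))\ge e(p^2+p-1)>ep^2$ (for the $\{(1,a_1),(1,b_1)\}$ piece). Two small slips worth fixing: the unramified enlargement of $k$ is justified by \autoref{extendunramf}(1), not (2), since you are reducing $\overline{\Symb}_X(k)$ rather than the full Mackey product; and your closing remark about the ``vanishing inequality $v_L(a_i)+v_L(b_j)>pe_0(L)$'' is a criterion for vanishing in $K_2^M(L)/p$ (via the Galois symbol), which is not known to detect vanishing in $(E_1/p\otimes E_2/p)(L)$---the compositum case $e=[L:k_i]>1$ is handled not by that inequality but by carrying the factor $e$ through all filtration indices in the $\psi$-argument, exactly as the paper does.
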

\begin{proof} The proof is very analogous to \autoref{main1}. 
Consider the finite extension $L=k(E_1[p], E_2[p])$. 
Using \autoref{extendunramf} we may assume that $L/k$ is totally ramified. 
Since for $i=1,2$ the extension $k_i := k(E_i[p])/k$ is totally ramified of degree $p^2-1$, 
 $L/k$ is a tamely ramified extension of degree $e(p^2-1)$, 
 where $e = [L:k_1] = [L:k_2]$. 
 
It follows by \eqref{ssing0}, that once again it suffices to show the vanishing of every symbol  $\{a,b\}_{k/k}\in(\widehat{E}_1\otimes \widehat{E}_2)(k)/p$.  
 Let $\{a,b\}_{k/k}\in(\widehat{E}_1\otimes \widehat{E}_2)(k)/p$. 
 We will show that $\{a,b\}_{L/L}=0$. 
 Decompose 
 $a = (a_0,a_1) \in \widehat{E}_1(k_1)/p$, and  $b = (b_0,b_1) \in \widehat{E}_2(k_2)/p$ by identifying   
\[
\widehat{E}_i(k_i)/p \simeq \overline{U}_{k_i}^{p(e_0(k_i) - t_0(k_i))} \oplus \overline{U}_{k_i}^{pt_0(k_i)} = \overline{U}_{k_i}^{p^2} \oplus \overline{U}_{k_i}^{p}.
\]
 From \autoref{resdec}, 
 we may assume that $v_k(a)= v_k(b) = 1$ and we have   
 \begin{align*} 	
 a_0\in\overline{U}_{k_1}^{p^2+p}\quad \mbox{and}\quad a_1\in\overline{U}_{k_1}^{p^2+p-1}\smallsetminus \overline{U}_{k_1}^{p^2+p}, \\
 b_0\in\overline{U}_{k_2}^{p^2+p}\quad \mbox{and}\quad b_1\in\overline{U}_{k_2}^{p^2+p-1}\smallsetminus \overline{U}_{k_2}^{p^2+p}. 
 \end{align*}
 By restricting them to $L$, we obtain 
 \[
 a=(a_0,a_1),\ b= (b_0,b_1) \quad \mbox{with $a_0,b_0 \in\overline{U}_{L}^{e(p^2+p)}$ and $a_1,b_1 \in\overline{U}_{L}^{e(p^2+p-1)}\smallsetminus \overline{U}_{L}^{e(p^2+p)}$}
 \]
 in the decomposition
 $
 \widehat{E}_i(L)/p \simeq \overline{U}_{L}^{p(e_0(L) - t_0(L))} \oplus  \overline{U}_{L}^{pt_0(L)} = \overline{U}_{L}^{ep^2} \oplus  \overline{U}_{L}^{ep}.
$
 This in turn gives,  
\[\{a,b\}_{L/L}=\{(a_0,1),(b_0,1)\}_{L/L}+\{(1,a_1),(b_0,1)\}_{L/L}+\{(a_0,1),(1,b_1)\}_{L/L}+\{(1,a_1),(1,b_1)\}_{L/L}.\] 
Since the extensions $L(\sqrt[p]{a_0})$, and $L(\sqrt[p]{b_0})$ are unramified over $L$, the first three symbols vanish. It remains to show $\{(1,a_1),(1,b_1)\}_{L/L}=0$. Consider the finite extension $L_1=L(\sqrt[p]{b_1})$. We need to show that $(1,a_1)\in N_{L_1/L}(\widehat{E}_1(L_1)/p)$. 
By the same reason, we may assume that $L_1/L$ is ramified. 
\autoref{jump} gives us that 
$L_1/L$ is totally ramified of degree $p$ with jump at $s= pe_0(L) - i = e(p^2+p) - i$ 
	for some $e(p^2+p-1) \le i < e (p^2+p)$. 
	In particular, we have $0 < s \le e$. 
	Considering the decomposition 
	\[
	\widehat{E}_1(L_1)/p\simeq\overline{U}_{L_1}^{ep^3}\oplus\overline{U}_{L_1}^{ep^2},
	\] 
	it is enough to show that $a_1\in\img(\overline{U}_{L_1}^{ep^2}\xrightarrow{N_{L_1/L}}\overline{U}_{L}^{ep})$. 
	By 
	$e(p^2+p-1) > e \ge s$, 
	 \autoref{psi} gives us a surjection \[\overline{U}_{L_1}^{\psi(e(p^2+p-1))}\xrightarrow{N_{L_1/L}}\overline{U}_L^{e(p^2+p-1)}\rightarrow 0.\] In order for  $(1,a_1)$ to be a norm, we need to verify that $\overline{U}_{L_1}^{\psi(e(p^2+p-1))}\subset\overline{U}_{L_1}^{ep^2}$. 
	 This follows from $\psi(e(p^2+p-1))\geq e(p^2+p-1) > ep^2$.
	 Here, the first inequality follows from the definition of the Herbrand function.

\end{proof}

In order to extend \autoref{maintheointro} to this case, we would need an analog of \autoref{main2}. 
The problem is that in this case we do not know how large the $K$-group $K(L;E_1,E_2)/p$ is. Let us focus for simplicity on the case of a self-product $X=E\times E$. 
It follows by \cite[Proposition~3.6]{Hiranouchi/Hirayama2013} that the image of the  Galois symbol $(E/p\otimes E/p)(L)\xrightarrow{s_p}H^2(L,E[p]\otimes E[p])$ is isomorphic to $\Z/p$. In all other reduction cases this map is an isomorphism (\cite{Raskind/Spiess2000, Yamazaki2005, Hiranouchi2014}). 
\begin{que}\label{injectivity} Is the map $(E/p\otimes E/p)(L)\xrightarrow{s_p}H^2(L,E[p]\otimes E[p])$ injective when $E$ has good supersingular reduction? 
\end{que} It is very likely that \autoref{injectivity} has a negative answer. At least the Mackey product $(E/p\otimes E/p)(L)\simeq(\overline{U}^{p}\otimes\overline{U}^{p^2})(L)$ does not seem to give enough relations that guarantee injectivity. 
A weaker question is whether the analog of Claim 1 in the proof of \autoref{main2} is true in this case. 
\begin{que}\label{Lsymbols} Is the $K$-group $K(L;E_1,E_2)/p$ generated by symbols of the form $\{a,b\}_{L/L}$, with $a\in E_1(L), b\in E_2(L)$? 
\end{que}


  The next theorem provides some indication that if \autoref{Lsymbols} has an affirmative answer, then  \autoref{main2} can indeed be extended to this case.

\begin{theo}\label{main3} Let $k$ be a finite unramified extension of $\Q_p$. Suppose $X=E\times E$ is the self-product of an elliptic curve over $k$ with good supersingular reduction. Let $L=k(E[p])$.
Then, for every $a, b\in E(L)/p$, we have  
\[N_{L/k}(\{a,b\}_{L/L})=\{a,b\}_{L/k}=0\]
in the Mackey product $(E\otimes E)(k)/p$.
\end{theo}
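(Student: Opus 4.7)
The plan is to reduce the vanishing of $N_{L/k}(\{a,b\}_{L/L})$ to a Galois trace computation on the graded pieces of a filtration on $\widehat{E}(L)/p$. Since $[L:k]=p^2-1$ is coprime to $p$, the restriction $\res_{L/k}$ is injective on mod $p$ Mackey products (\autoref{normrestr}), and by \autoref{restriction}~(2)
\[\res_{L/k}\bigl(N_{L/k}(\{a,b\}_{L/L})\bigr)=\sum_{\sigma\in G_0}\{\sigma(a),\sigma(b)\}_{L/L}\in(E\otimes E)(L)/p,\]
with $G_0=\Gal(L/k)$ cyclic of order $p^2-1$. Fix a generator $\sigma$ of $G_0$; since $L/k$ is totally tamely ramified, the element $\overline{u}=\sigma(\pi_L)/\pi_L\in\F_L^\times$ has exact order $p^2-1$, so in particular $\F_L\supset\F_{p^2}$. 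It suffices to show that the above Galois trace vanishes.

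Supersingularity gives $\overline{E}(\F_L)[p]=0$, so $\widehat{E}(L)/p\simeq E(L)/p$, and via \eqref{decompose} we obtain $E(L)/p\simeq\overline{U}_L^{p^2+1}\oplus\overline{U}_L^{p+1}$. Writing $a=(a_0,a_1)$, $b=(b_0,b_1)$, the symbol $\{a,b\}_{L/L}$ splits into four contributions lying in the Mackey product blocks
\[(\overline{U}^{p^2+1}\otimes\overline{U}^{p^2+1})(L),\ (\overline{U}^{p^2+1}\otimes\overline{U}^{p+1})(L),\ (\overline{U}^{p+1}\otimes\overline{U}^{p^2+1})(L),\ (\overline{U}^{p+1}\otimes\overline{U}^{p+1})(L),\]
which we treat separately. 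For any symbol $\{x,y\}_{L/L}$ in one of the first three blocks the filtration levels satisfy $v_L(x-1)+v_L(y-1)\geq(p^2+1)+(p+1)>pe_0(L)=p^2+p$, so by \eqref{Tate} combined with \cite[Lemma~3.4]{Hiranouchi/Hirayama2013} its image under the Galois symbol is zero; a direct norm-chase in the style of the proof of \autoref{ssing1} (using \autoref{psi} on the totally ramified Kummer extension $L(\sqrt[p]{y})/L$ and surjectivity of the norm on units in the unramified case) together with the vanishing trick \autoref{vanishingtrick} then shows that these three blocks vanish already in $(E\otimes E)(L)/p$.

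The only interesting contribution is the block $(\overline{U}^{p+1}\otimes\overline{U}^{p+1})(L)$, which we filter by the finite chain $\overline{U}^{p+1}\supset\overline{U}^{p+2}\supset\cdots\supset\overline{U}^{p^2+p}\supset 0$ in each factor; by the valuation bound above, the only possibly nontrivial bigraded pieces $\overline{U}_L^{p+i_1}/\overline{U}_L^{p+i_1+1}\otimes\overline{U}_L^{p+i_2}/\overline{U}_L^{p+i_2+1}$ have $i_1,i_2$ coprime to $p$ (\autoref{units}) and $2\leq i_1+i_2\leq p^2-p$. On such a piece, \autoref{filtration3}~(2) gives that $\sigma$ acts by multiplication by $\overline{u}^{i_1+i_2}$, and since $0<i_1+i_2<p^2-1$ we have $\overline{u}^{i_1+i_2}\neq 1$ in $\F_{p^2}^\times$, so $\sum_{r=0}^{p^2-2}\overline{u}^{r(i_1+i_2)}=0$. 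Hence the Galois trace vanishes on every bigraded piece; since the bifiltration is finite and the trace operator $T=\sum_\sigma\sigma$ satisfies $T^2\equiv-T\pmod p$, a short induction promotes this to vanishing of $T$ on the whole block. The main obstacle is the rigorous verification that the first three blocks vanish in the Mackey product rather than only in $K_2^M(L)/p$, since the injectivity results of Hiranouchi are formulated for $(\overline{U}^0\otimes\overline{U}^j)(L)$; bridging this gap requires the careful valuation-by-valuation norm argument sketched above and constitutes the most technically delicate part of the proof.
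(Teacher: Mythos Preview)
Your overall architecture matches the paper's: reduce to showing $\res_{L/k}\circ N_{L/k}=0$, compute the Galois trace $\sum_\sigma\sigma(\{a,b\}_{L/L})$ on graded pieces using the action by powers of $\overline u$, and promote graded vanishing to global vanishing via the idempotent relation $T^2\equiv -T$ (this is exactly the paper's Claim~1). Where you diverge is in the choice of filtration: you pass first to the decomposition $E(L)/p\simeq\overline U_L^{p^2+1}\oplus\overline U_L^{p+1}$ of \eqref{decompose} and try to treat the four blocks separately, whereas the paper works throughout with the Galois-stable filtration $\mathcal D_L^i$ on $\widehat E(L)/p$.

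This difference is where your argument breaks. In the $(\overline U^{p+1}\otimes\overline U^{p+1})$ block you assert that the only possibly nontrivial bigraded pieces have $i_1+i_2\le p^2-p$, and then the trace vanishes because $\overline u^{i_1+i_2}\neq 1$. But the bound $i_1+i_2\le p^2-p$ comes from the inequality $(p+i_1)+(p+i_2)>pe_0(L)$, which only forces vanishing under the Galois symbol $s_p$ into $K_2^M(L)/p$; as you yourself note, the injectivity results of \cite{Hiranouchi2014} are for $(\overline U^0\otimes\overline U^j)(L)$, and no such statement is available for $(\overline U^{p+1}\otimes\overline U^{p+1})(L)$. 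The omitted range $i_1+i_2>p^2-p$ contains in particular $i_1+i_2=p^2-1$, and there $\overline u^{i_1+i_2}=1$, so your trace computation genuinely fails. This is not a cosmetic gap: it is exactly the hard case, and the paper devotes its Cases~2.1 and~2.2 (Claims~2 and~3) to it, showing by a symmetric $\psi$-function argument that for $i+j=p^2-1$ at least one of the two elements must lie in the image of the relevant norm, hence the symbol already vanishes in the Mackey product. Your ``norm-chase in the style of \autoref{ssing1}'' does not supply this: \autoref{ssing1} treats only the single level $i_1=i_2=p^2-1$, and a direct attempt at other levels (e.g.\ checking whether $\psi_{L_1/L}(p+i_2)\ge p^2$ for generic $i_1+i_2>p^2-p$) fails without the contradiction argument the paper uses. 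The same issue underlies the vanishing of your first three blocks, which you flag as the main obstacle; but you do not flag that the identical obstacle reappears inside the fourth block at the critical level $i_1+i_2=p^2-1$.

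A secondary point: the decomposition \eqref{decompose} depends on a choice of $y_0\in\widehat E[p]$ and is \emph{not} $\Gal(L/k)$-equivariant (this is why the paper has to track $E^{\prime\sigma}$ in \autoref{filtration3}). So the Galois trace does not a priori preserve your four-block splitting, and treating the blocks independently is only legitimate once you have actually proved the first three vanish. The paper sidesteps this by using the $\mathcal D_L^i$ filtration, which is manifestly Galois-stable, and only invokes the $\overline U$-decomposition at the end, inside the exceptional levels $i+j=p^2-1$ and $2p^2-2$, where it performs the explicit norm computations you are missing.
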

\begin{proof}
The proof will be along the lines of Claim 3 in the proof of \autoref{main2}.  With the usual argument we may assume that the extension  $L=k(\widehat{E}[p])/k$ is totally ramified of degree $p^2-1$ with cyclic Galois group. By \autoref{formalproduct} and \eqref{ssing0} it is enough to prove that for every $a,b\in\widehat{E}(L)/p$, it holds $N_{L/k}(\{a,b\}_{L/L})=0,$ and since  $\res_{L/k}$ is injective,
this is equivalent to proving that $\res_{L/k}(N_{L/k}(\{a,b\}_{L/L}))=0$.

Set $W=(\widehat{E}/p\otimes\widehat{E}/p)(L)$. Recall that the group $\widehat{E}(L)/p$ has a filtration $\{\mathcal{D}_L^i\}$ given by $\mathcal{D}_L^i=\widehat{E}^i(L)/(p\widehat{E}(L)\cap\widehat{E}^i(L))$, for $i\geq 1$. This induces a filtration on $W$ as follows.  For every $t\geq 1$ the symbol maps define 
\[
\Fil^t(W):= \sum_{n+m=t} \img(\mathcal{D}_L^n\otimes\mathcal{D}_L^m\to W).
\] 

\smallskip
\noindent
\textit{Claim 1:} Proving $\res_{L/k}\circ N_{L/k}=0$ is equivalent to proving that if $\{a,b\}_{L/L}\in\Fil^t(W)$ for some $t\geq 1$, then $\res_{L/k}(N_{L/k}(\{a,b\}_{L/L}))\in\Fil^{t+1}(W)$. 

The direction $(\Rightarrow)$ is clear. To prove $(\Leftarrow)$, fix $a,b\in\widehat{E}(L)/p$. Assume that $\{a,b\}_{L/L}\in\Fil^t(W)$ for some $t\geq 1$. Since by assumption $\res_{L/k}(N_{L/k}(\{a,b\}_{L/L}))\in\Fil^{t+1}(W)$, we may write $\res_{L/k}(N_{L/k}(\{a,b\}_{L/L}))$ in the form,
$\sum_{l=1}^N\{a_l,b_l\}_{L/L}$, for some $N\geq 1$ and some $a_l\in\mathcal{D}_L^n$, $b_l\in\mathcal{D}_L^m$ with $n+m\geq t+1$. We claim that $\{a,b\}_{L/k}=0$ if and only if $\sum_{l=1}^N\{a_l,b_l\}_{L/k}=0$. To see this note that the element $\sum_{l=1}^N\{a_l,b_l\}_{L/k}$ can be rewritten as, 
\[\sum_{l=1}^N\{a_l,b_l\}_{L/k}= N_{L/k}\left(\sum_{l=1}^N\{a_l,b_l\}_{L/L}\right)=N_{L/k}(\res_{L/k}(\{a,b\}_{L/k}))=(p^2-1)\{a,b\}_{L/k}.\] Since $p^2-1\equiv -1 \pmod p$, the claim follows. Using the  above argument allows us to reduce to proving that $\{a,b\}_{L/k}=0$, whenever $\{a,b\}_{L/L}\in\Fil^{t+1}(W)$. At this point we can continue inductively. The process is guaranteed to terminate in finitely many steps, since \autoref{filtration2} ($d$) implies that $\Fil^{2p^2}(W)=0$. 

From now we focus on proving the implication,
\[\{a,b\}_{L/L}\in\Fil^t(W)\Rightarrow\res_{L/k}(N_{L/k}(\{a,b\}_{L/L}))\in\Fil^{t+1}(W).\]
Let $G_0=\Gal(L/k)=\langle\sigma\rangle$ be the Galois group of $L/k$. Then for all $a,b\in\widehat{E}(L)/p$ we have,
 \[\res_{L/k}(N_{L/k}(\{a,b\}_{L/L}))=\sum_{r=0}^{p^2-2}\sigma^r(\{a,b\}_{L/L}).\]  Suppose that $a\in\mathcal{D}_L^i\setminus\mathcal{D}_L^{i+1}$, and $b\in\mathcal{D}_L^j\setminus\mathcal{D}_L^{j+1}$, for some $i,j\in\{1,\ldots,p^2\}$ with $i+j=t$. Note that if either $i$ or $j$ is equal to $p^2$, then \autoref{filtration2} and \autoref{resdec} imply that the extension $L_0=L\left(\frac{1}{p}b\right)$ is unramified over $L$. Using the surjectivity of the norm, $E_1(L_0)\xrightarrow{N_{L_0/L}} E_1(L)$, we immediately get that $\{a,b\}_{L/L}=0$. We may therefore reduce to the case when $i,j\in\{1,\ldots,p^2-1\}$. Consider the quotient $\Fil^t(W)/\Fil^{t+1}(W)$ and the well-defined map,
 \[\mathcal{D}_L^{i}/\mathcal{D}_L^{i+1}\otimes\mathcal{D}_L^{j}/\mathcal{D}_L^{j+1}\rightarrow\Fil^t(W)/\Fil^{t+1}(W).\]
Denote by 
$\overline{a}\otimes \overline{b}$ the image of $a\otimes b$ in $\mathcal{D}_L^{i}/\mathcal{D}_L^{i+1}\otimes\mathcal{D}_L^{j}/\mathcal{D}_L^{j+1}$. 
Moreover, let $\overline{\sigma}$ be the  endomorphism induced by $\sigma$,
\[\overline{\sigma}:\mathcal{D}_L^{i}/\mathcal{D}_L^{i+1}\otimes\mathcal{D}_L^{j}/\mathcal{D}_L^{j+1}\rightarrow\mathcal{D}_L^{i}/\mathcal{D}_L^{i+1}\otimes\mathcal{D}_L^{j}/\mathcal{D}_L^{j+1}.\] Fix a uniformizer $\pi_L$ of $L$ and set $\overline{u}=\sigma(\pi_L)/\pi_L\in\F_{p^2}^\times$. Then lemmas \ref{sigmaaction} and \ref{filtration3} (2) yield an equality,
\[\overline{\sigma}(\overline{a}\otimes\overline{b})=\overline{u}^i\overline{a}\otimes\overline{u}^{j}\overline{b}.\] 
Similarly to the proof of Claim 3 in \autoref{main2}, the tensor product $\mathcal{D}_L^{i}/\mathcal{D}_L^{i+1}\otimes\mathcal{D}_L^{j}/\mathcal{D}_L^{j+1}$ becomes a $\F_L$-vector space, and hence we can rewrite,
\[\overline{\sigma}(\overline{a}\otimes \overline{b})=\overline{u}^{i+j}\overline{a}\otimes \overline{b}.\] 
When $i+j$ is not divisible by $p^2-1$, we may proceed exactly as in the proof of Claim 3 in \autoref{main2} to deduce that 
\[\sum_{r=0}^{p^2-2}\overline{\sigma}^r(\overline{a}\otimes \overline{b})=\left(\sum_{r=0}^{p^2-2}(\overline{u}^{i+j})^r\right)\overline{a}\otimes \overline{b}=0,\] 
and hence $\res_{L/k}(N_{L/k}(\{a,b\}_{L/L}))\in\Fil^{t+1}(W)$. It remains to consider the cases when $i+j$ is a multiple of $p^2-1$. Since $2\leq i+j\leq 2p^2-2$, the only multiples of $p^2-1$ in that range are $p^2-1$ and $2p^2-2$. We consider each case separately. 

\smallskip
\noindent
\textit{Case 1:} Suppose that $i+j=2p^2-2$. This is only possible if $i=j=p^2-1$. In this case we can prove that $\{a,b\}_{L/L}=0$, imitating the proof of \autoref{ssing1}. 

\smallskip
\noindent
\textit{Case 2.1:} Suppose that $i+j=p^2-1$ and both $i,j$ are coprime to $p$. Using \eqref{decompose} we can decompose $a=(a_0,a_1)$, and $b=(b_0,b_1)$ with $a_0,b_0\in\overline{U}_L^{p^2+1}$, and $a_1,b_1\in\overline{U}_L^{p+1}$. Then \autoref{filtration2} gives us that $a_1\in\overline{U}_L^{p+i}\setminus\overline{U}_L^{p+i+1}$, $b_1\in\overline{U}_L^{p+j}\setminus\overline{U}_L^{p+j+1}$, while $a_0\in\overline{U}_L^{p^2+t}$, $b_0\in\overline{U}_L^{p^2+t'}$, for some $t,t'$ such that $pt>i$, and $pt'>j$. Notice that the symbols $\{(1,a_1),(b_0,1)\}_{L/k}$, $\{(a_0,1),(1,b_1)\}_{L/k}$ and $\{(a_0,1),(b_0,1)\}_{L/k}$ vanish due to previous considerations in the proof. It remains to show that $\{(1,a_1),(1,b_1)\}_{L/k}=0$. In fact, we claim that $\{(1,a_1),(1,b_1)\}_{L/L}=0$. We consider the finite extensions $L_1=L(\sqrt[p]{a_1})$, and $L_2=L(\sqrt[p]{b_1})$, both of which are totally ramified of degree $p$ over $L$. Using the usual vanishing trick (see \autoref{vanishingtrick}), it is enough to establish the following claim.

\smallskip
\noindent
\textit{Claim 2:} The assumption $i+j=p^2-1$ forces one of the elements to be a norm. That is, either $(1,a_1)\in N_{L_2/L}(E(L_2)/p)$, or $(1,b_1)\in N_{L_1/L}(E(L_1)/p)$. 

To prove Claim 2, we need to show that either $a_1$ lies in the image of the norm map $\overline{U}_{L_2}^{p^2}\xrightarrow{N_{L_2/L}}\overline{U}_L^{p}$, or $b_1$ lies in the image of $\overline{U}_{L_1}^{p^2}\xrightarrow{N_{L_1/L}}\overline{U}_L^{p}$. 
 Let $G_1=\Gal(L_1/L)$, $G_2=\Gal(L_2/L)$ be the Galois groups of $L_1, L_2$ respectively. Then the jump in the ramification filtration of $G_1$ (resp.\ of $G_2$) occurs at $s_1=p^2+p-(p+i)=p^2-i$ (resp.\ at $s_2=p^2-j$). Observe that since $i+j=p^2-1$, it follows that,
\[p+i=p^2+p-1-j>p^2-j\Rightarrow p+i>s_2.\] Since the computation is symmetric, we also get $p+j>s_1$. Then \autoref{psi} yields,
\begin{eqnarray*}\psi_{L_2/L}(p+i)=&&s_2+p(p+i-s_2)=p^2-j+p(p+i-p^2+j)\\=&&p^2-j+p(p-1)=2p^2-p-j.
\end{eqnarray*} A symmetric computation yields $\psi_{L_1/L}(p+j)=2p^2-p-i$. In order to prove Claim 2, we must show that at least one of
$\psi_{L_2/L}(p+i)$ and $\psi_{L_1/L}(p+j)$ is larger than $p^2$. Assume to contradiction that  $\psi_{L_1/L}(p+j)\leq p^2$, and $\psi_{L_2/L}(p+i)\leq p^2$. Adding these inequalities yields,
\[2p^2-p-j+2p^2-p-i\leq 2p^2\Rightarrow 2p^2-2p\leq i+j\Rightarrow 
 2p^2-2p\leq p^2-1\Rightarrow (p-1)^2\leq 0,
\] which is a contradiction. We conclude that either $\psi_{L_2/L}(p+i)>p^2$, or $\psi_{L_1/L}(p+j)>p^2$, which respectively give that $(1,a_1)\in N_{L_2/L}(E(L_2)/p)$, or $(1,b_1)\in N_{L_1/L}(E(L_1)/p)$. 
 
\smallskip
\noindent
\textit{Case 2.2:} Suppose that $i+j=p^2-1$ and one of the integers $i,j$ is divisible by $p$. Without loss of generality, assume that $i=pl$, for some $1\leq l<p$. We decompose $a=(a_0,a_1)$, $b=(b_0,b_1)$ with $a_0,b_0\in\overline{U}_L^{p^2+1}$, and $a_1,b_1\in\overline{U}_L^{p+1}$. \autoref{filtration3} and \cite[Lemma 2.1.4]{kawachi2002} yield that $a_0\in\overline{U}_L^{p^2+l}\setminus\overline{U}_L^{p^2+l+1}$, $a_1\in\overline{U}_L^{p+i+1}$, while $b_1\in\overline{U}_L^{p+j}\setminus\overline{U}_L^{p+j+1}$. Finally, $b_0\in\overline{U}_L^{p^2+t}$ for some $t $ such that $pt>j$. The symbols $\{(1,a_1), (b_0,1)\}_{L/k}$, $\{(1,a_1), (1, b_1)\}_{L/k}$ and $\{(a_0,1), (b_0,1)\}_{L/k}$  vanish due to earlier considerations in this proof. It remains to show that $\{(a_0,1), (1,b_1)\}_{L/k}=0$. We proceed similarly to Case 2.1, to show $\{(a_0,1), (1,b_1)\}_{L/L}=0$. Consider the finite extension $L_3=L(\sqrt[p]{a_0})$, which is totally ramified of degree $p$ over $L$. It is enough to establish the following claim.

\smallskip
\noindent
\textit{Claim 3:} The assumption $i+j=p^2-1$ forces  the element  $(1,b_1)\in E(L)/p$ to be a norm, namely $(1,b_1)\in N_{L_3/L}(E(L)/p)$. 

To prove Claim 3 we need to show that $b_1$ lies in the image of the norm $\overline{U}_{L_3}^{p^2}\xrightarrow{N_{L_3/L}}\overline{U}_L^{p}$. 
 Let $G_3=\Gal(L_3/L)$ be the Galois group of $L_3/L$. The jump in the ramification filtration of $G_3$  occurs at $s_3=p^2+p-(p^2+l)=p-l$. It is then clear that $p+j>s_3$, and hence we can compute $\psi_{L_3/L}(p+j)$ as follows,
 \[\psi_{L_3/L}(p+j)=s_3+p(p+j-s_3)=p-l+p(p+j-p+l)=p-l+p(j+l)=p-l+pj+pl.\] Using that $i+j=p^2-1$, and hence $pl+j=p^2-1$, the last equality can be rewritten as, \[\psi_{L_3/L}(p+j)=p-l+pj+p^2-1-j=p^2+p+(p-1)j-l-1.\] We want to prove that $\psi_{L_3/L}(p+j)>p^2 $. Equivalently, $(p-1)j+p-l>1$. But this is clear, since $l<p$ and $j\geq 1$. We conclude that $b_1\in\img(\overline{U}_{L_3}^{p^2}\xrightarrow{N_{L_3/L}}\overline{U}_L^{p})$, as required.

\end{proof}

\subsection{Extensions}
We close this section by giving some easy extensions of \autoref{main1}.

\begin{cor}\label{product} Let $k$ be a finite unramified extension of $\Q_p$ and $E_1,\ldots,E_r$ be elliptic curves over $k$ 
with good reduction.
 Assume that at most one of the curves has good supersingular reduction. 
  Let $X=E_1\times\cdots\times E_r$. Then, the Albanese kernel $F^2(X)$ is $p$-divisible. 
\end{cor}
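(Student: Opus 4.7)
By the decomposition \eqref{F^2}, $F^2(X)\simeq \bigoplus_{2\le\nu\le r}\bigoplus_{1\le i_1<\cdots<i_\nu\le r} K(k;E_{i_1},\ldots,E_{i_\nu})$, so it suffices to show each Somekawa summand is $p$-divisible. Since at most one of $E_1,\ldots,E_r$ has supersingular reduction, every subset $\{i_1,\ldots,i_\nu\}$ of size $\nu\ge 2$ contains at least one $E_{i_j}$ with good ordinary reduction. For $\nu=2$ this is exactly \autoref{main1} applied to the pair, so only the case $\nu\ge 3$ requires new argument.

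For $\nu\ge 3$ my plan is to reduce to the two-variable case using a linear ``multiplication'' map. Suppose one has shown (see below) that $(E_{i_1}\otimes\cdots\otimes E_{i_\nu})(k)/p$ is generated by the ``base-level'' symbols $\{y_1,\ldots,y_\nu\}_{k/k}$ with $y_j\in E_{i_j}(k)$, the $\nu$-factor analog of \autoref{main2}. Then each such symbol is killed as follows: by \autoref{main1} applied to $(E_{i_1},E_{i_2})$, write $\{y_1,y_2\}_{k/k}=p\cdot w$ for some $w\in (E_{i_1}\otimes E_{i_2})(k)$. The assignment
\[
\Psi_{y_3,\ldots,y_\nu}\colon (E_{i_1}\otimes E_{i_2})(k)\to (E_{i_1}\otimes\cdots\otimes E_{i_\nu})(k),\qquad \{a,b\}_{L/k}\mapsto \{a,b,\operatorname{res}_{L/k}(y_3),\ldots,\operatorname{res}_{L/k}(y_\nu)\}_{L/k},
\]
is well defined on the Mackey product (a direct check shows it respects both projection formulas \eqref{projectionformula}, using that $y_3,\ldots,y_\nu$ are defined over $k$) and sends $\{y_1,y_2\}_{k/k}$ to $\{y_1,\ldots,y_\nu\}_{k/k}$. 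Hence $\{y_1,\ldots,y_\nu\}_{k/k}=p\cdot \Psi_{y_3,\ldots,y_\nu}(w)$ is $p$-divisible, as required.

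The main obstacle is proving the generation statement used above, namely the $\nu$-factor analog of \autoref{main2}. To obtain it I would first extend \autoref{extendunramf}(2) to the multi-factor setting (the same vanishing-trick argument transfers verbatim, using the good reduction of $E_{i_1}$ to invoke Mazur's norm surjectivity), which reduces the problem to a suitable finite unramified extension $k'/k$ with $k'\supset k(\widehat{E}_{i_j}[p])$ for every $j$. Over $k'$ I would mimic the proof of \autoref{main2}: first pass to the formal Mackey product via a $\nu$-factor version of \autoref{formalproduct}, then decompose each $\widehat{E}_{i_j}/p$ over a further tame extension $L/k'$ into the summands $\overline{U}^{a}$ as in Section~3. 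The resulting $\nu$-fold Mackey products of the $\overline{U}^{a}$'s are generated by base-level symbols by the same norm/restriction arguments used for Claim~1 of \autoref{main2}, and moreover map via Galois symbols into $H^\nu(L,\mu_p^{\otimes\nu})$, which vanishes since $\operatorname{cd}_p(L)=2<\nu$; this cohomological vanishing (together with the Bloch--Kato isomorphism used in the $\nu=2$ step of \autoref{main2}) provides the clean killing of all higher-field symbols and delivers the generation statement needed to complete the argument.
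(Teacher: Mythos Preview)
Your route is far longer than the paper's, and the step you flag as the ``main obstacle'' is both unnecessary and, as you argue it, not correct.

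The paper's proof is two sentences. After reducing via \eqref{F^2} to showing that each Mackey product $(E_{i_1}/p\otimes\cdots\otimes E_{i_\nu}/p)(k)$ vanishes, it notes that \autoref{main1} gives this for $\nu=2$, and asserts that the general case follows from the \emph{associativity} of the tensor product in the category of Mackey functors: one regroups the $\nu$-fold product as $(E_{i_1}/p\otimes E_{i_2}/p)\otimes(E_{i_3}/p\otimes\cdots\otimes E_{i_\nu}/p)$, choosing $E_{i_1}$ ordinary. No multi-factor analog of \autoref{main2} and no explicit multiplication map are invoked.

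As for the argument you sketch for the generation statement: your map $\Psi_{y_3,\ldots,y_\nu}$ is indeed well defined and does reduce base-level symbols to the two-factor case, so if generation held you would be done. But the cohomological-dimension step you propose does not establish generation. Knowing that the $\nu$-variable Galois symbol lands in $H^\nu(L,\mu_p^{\otimes\nu})=0$ for $\nu\ge 3$ says only that this map is zero; it carries no information back to the Mackey side and certainly does not ``kill higher-field symbols''. Even combining with Bloch--Kato you only learn that $(\G_m/p)^{\otimes\nu}(L)\simeq K_\nu^M(L)/p=0$, and the comparison maps from the products $(\overline{U}^{a_1}\otimes\cdots\otimes\overline{U}^{a_\nu})(L)$ into $(\G_m/p)^{\otimes\nu}(L)$ need not be injective, since the Mackey tensor is only right exact. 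So neither the vanishing of those $\overline{U}$-products nor generation by $L$-level (let alone $k$-level) symbols follows from your argument.
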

\begin{proof}
Using the relation of $F^2(X)$ with the Somekawa $K$-groups given in   \eqref{F^2}, we are reduced to proving that the $K$-group $K(k;E_{i_1},\ldots, E_{i_{\nu}})$ is $p$-divisible, for every $2\leq\nu\leq r$ and $1\leq i_1<i_2<\cdots<i_{\nu} \leq r$. 
It is enough to show that the Mackey product $(E_{i_1}/p\otimes\cdots\otimes E_{i_\nu}/p)(k)=0$. 
Note that \autoref{main1} implies that this is true when $\nu=2$. The general case follows  by the fact that the product $\otimes$ in the category of Mackey functors is associative. 

\end{proof}

The following corollary follows by an easy descent argument. 

\begin{cor} \label{extensions} Let $k$ be a finite unramified extension of $\Q_p$. Let $X$ be a principal homogeneous space of an abelian variety $A$, such that  $X_L\simeq A_L$ for some finite extension $L/k$ of degree coprime to $p$. Suppose additionally that there is an isogeny $A\xrightarrow{\phi}E_1\times\cdots\times E_r$ of degree coprime to $p$, where $E_i$ are elliptic curves over $k$ satisfying the assumptions of \autoref{product}. Then, the groups $F^2(A)$ and $F^2(X)$ are $p$-divisible. 
\end{cor}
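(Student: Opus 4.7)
The plan is to combine two descent arguments, each along a morphism of degree coprime to $p$.

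\textbf{Divisibility of $F^2(A)$.} Let $n = \deg \phi$ and let $\check{\phi}: E_1 \times \cdots \times E_r \to A$ denote the dual isogeny, so that $\check{\phi} \circ \phi = [n]_A$ and $\phi \circ \check{\phi} = [n]_{E_1\times\cdots\times E_r}$. The induced pushforwards restrict to maps $\phi_*: F^2(A) \to F^2(E_1\times\cdots\times E_r)$ and $\check{\phi}_*: F^2(E_1\times\cdots\times E_r) \to F^2(A)$, with both composites equal to $[n]_*$. Via the Raskind--Spiess decomposition \eqref{F^2}, $[n]_*$ acts on the $\nu$-fold Mackey-product summand of $F^2(E_1\times\cdots\times E_r)$ as multiplication by $n^\nu$, hence invertibly modulo $p$; the analogous invertibility on $F^2(A)/p$ follows by transferring via $\phi_*$, since $\phi$ intertwines the two actions of $[n]$. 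Consequently $\phi_*$ induces an isomorphism $F^2(A)/p \xrightarrow{\simeq} F^2(E_1\times\cdots\times E_r)/p$, and the latter vanishes by \autoref{product}.

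\textbf{Divisibility of $F^2(X)$.} Using the hypothesis $X_L \simeq A_L$, we perform a second descent along $L/k$. Since $[L:k]$ is coprime to $p$, the identity $N_{L/k}\circ \res_{L/k} = [L:k]$ shows that the pullback $F^2(X)/p \hookrightarrow F^2(X_L)/p \simeq F^2(A_L)/p$ is injective. To conclude, it suffices to verify $F^2(A_L)/p = 0$. The same argument as in the first step applies over $L$: the base-changed isogeny $\phi_L$ still has degree $n$ coprime to $p$, and each $E_{i,L}$ retains its reduction type (good ordinary and good supersingular are preserved under base change of the residue field). Hence \autoref{product} applied to $E_{1,L}\times\cdots\times E_{r,L}$ yields $F^2(A_L)/p = 0$, and therefore $F^2(X)/p = 0$.

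The argument is a formal two-step descent along maps whose degrees are prime to $p$, and presents no serious obstacle. The only point requiring some care is the invertibility of $[n]_*$ on $F^2(A)/p$, which is forced by the explicit Mackey-product description \eqref{F^2} and the compatibility of $\phi$ with multiplication by $n$ on both abelian varieties.
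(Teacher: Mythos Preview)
Your proof takes essentially the same two-step descent approach as the paper: first reduce $p$-divisibility of $F^2(A)$ to that of $F^2(E_1\times\cdots\times E_r)$ via the isogeny of degree coprime to $p$, then reduce $p$-divisibility of $F^2(X)$ to that of $F^2(X_L)\simeq F^2(A_L)$ via pullback/pushforward along $L/k$ of degree coprime to $p$. Your treatment of the first step (invoking the Raskind--Spiess decomposition to see how $[n]_*$ acts on each summand) is more explicit than the paper's bare assertion that $\check{\phi}_\star\circ\phi_\star=n$, but the substance is identical.
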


\begin{proof} We first show that the Albanese kernel $F^2(A)$ is $p$-divisible. Suppose that the isogeny $A\xrightarrow{\phi}E_1\times\cdots\times E_r$ has degree $n$. Let $\check{\phi}$ be the dual isogeny. These induce pushforward maps,
\[F^2(A)/p\xrightarrow{\phi_\star} F^2(E_1\times\cdots \times E_r)/p
\xrightarrow{\check{\phi}_\star}F^2(A)/p,\] with $\check{\phi}_\star\circ\phi_\star=n$. Since by assumption $n$ is coprime to $p$, multiplication by $n$ is injective on $F^2(A)/p$. At the same time $F^2(E_1\times\cdots \times E_r)/p=0$ by the previous corollary. We conclude that $F^2(A)$ is $p$-divisible. 

Next suppose that $X$ is a principal homogeneous space of $A$ such that $X_L\simeq A$ for some finite extension $L/k$ of degree $m$ which is coprime to $p$. Consider the projection $X_L\xrightarrow{g}X$, which induces a pushforward, $F^2(X_L)/p\xrightarrow{g_\star}F^2(X)/p$, and a pullback $F^2(X)/p\xrightarrow{g^\star} F^2(X_L)/p$, satisfying $g_\star\circ g^\star=m$. By the previous case we have $F^2(X_L)/p\simeq F^2(A)/p=0$, which forces $g^\star$ to vanish. Since $g^\star$ is also injective, the claim follows.

\end{proof}

\begin{prop}\label{split tori}
	Let $k$ be a finite unramified extension of $\Q_p$. 
	Let $A_1$ and $A_2$ be abelian varieties over $k$. 
	For each $i = 1,2$, we assume that 
	the connected component of the special fiber of the N\'eron model of $A_i$ 
	is a split torus. 
	Then,  $(A_1\otimes A_2)(k)$  is $p$-divisible. 
\end{prop}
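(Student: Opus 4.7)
The plan is to reduce $p$-divisibility of $(A_1\otimes A_2)(k)$ to the vanishing of $K_2^M(k)/p$ via the Raynaud--Mumford rigid analytic uniformization.

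First I would invoke the uniformization for abelian varieties with totally split multiplicative reduction. Since by hypothesis the identity component of the special fiber of each N\'eron model is a split torus, there exist a split $k$-torus $T_i\simeq \mathbb{G}_m^{d_i}$ of dimension $d_i = \dim A_i$ and a trivial $G_k$-lattice $M_i \simeq \Z^{d_i}$ together with a short exact sequence of $G_k$-modules
\[
0 \to M_i \to T_i(\overline{k}) \to A_i(\overline{k}) \to 0.
\]
Taking $G_K$-invariants for any finite $K/k$ produces $0 \to M_i \to T_i(K) \to A_i(K) \to 0$, since $H^1(K,M_i) = \Hom_{\mathrm{cts}}(G_K, \Z^{d_i}) = 0$. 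I would then upgrade these to a short exact sequence of Mackey functors $0 \to M_i \to T_i \to A_i \to 0$ (with $M_i$ denoting the constant Mackey functor $\Z^{d_i}$): compatibility with restriction is immediate, and the norm maps match because for any $m \in M_i \subset T_i(L)$ defined over $k$ one has $N_{L/K}(m) = [L:K]\cdot m$ on both sides. This parallels the uniformization underlying the Mumford curve case (Corollary \ref{Mumford curves}).

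Next, using right-exactness of the Mackey product once in each variable yields a surjection $T_1 \otimes T_2 \twoheadrightarrow A_1 \otimes A_2$ of Mackey functors, so it suffices to prove $(T_1\otimes T_2)(k)/p = 0$. Since $T_i \simeq \mathbb{G}_m^{d_i}$ and the Mackey product distributes over direct sums, this group is a direct sum of $d_1 d_2$ copies of $((\mathbb{G}_m \otimes \mathbb{G}_m)/p)(k) \simeq K_2^M(k)/p$, using Kahn's isomorphism (Example \ref{MFexs}(3)). By the Merkurjev--Suslin theorem $K_2^M(k)/p \simeq H^2(k,\mu_p^{\otimes 2})$, and local Tate duality identifies this with the Pontryagin dual of $H^0(k,\mu_p^{-1})$. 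Since $k$ is unramified over $\Q_p$ and $p$ is odd, the absolute ramification index $e_k = 1$ is strictly less than $p-1$, so $\mu_p \not\subset k^{\times}$, $H^0(k,\mu_p^{-1}) = 0$, and we are done.

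The main technical hurdle will be the first step: the Raynaud--Mumford uniformization is naturally a rigid analytic object, and confirming that it assembles into a genuine short exact sequence of Mackey functors compatible with norms across all finite (in particular wildly ramified) extensions requires some care. Once that functoriality is in hand, the remainder of the argument is essentially formal.
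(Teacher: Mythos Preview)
Your proposal is correct and follows essentially the same route as the paper's proof: both invoke the rigid analytic uniformization $T_i(K)/M_i \simeq A_i(K)$ to produce a surjection of Mackey functors $T_1/p\otimes T_2/p \twoheadrightarrow A_1/p\otimes A_2/p$, then reduce to the vanishing of $K_2^M(k)/p$ for $k$ unramified over $\Q_p$. The only cosmetic differences are that the paper outsources the Mackey functoriality of the uniformization to \cite[Remark 4.2 (2)]{Yamazaki2005} and cites \cite[Chapter IX, Proposition 4.2]{Fesenko/Vostokov} directly for $K_2^M(k)/p=0$, whereas you spell out the norm compatibility and deduce the vanishing via Merkurjev--Suslin and local Tate duality; your concern about the ``main technical hurdle'' is well-placed but already handled in the literature.
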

\begin{proof}
	From the assumption on $A_i$, there exists a split torus 
	$T_i \simeq \G_m^{\oplus g_i}$ over $k$ with $g_i = \dim A_i$ 
	and a free abelian subgroup $L_i\subset T_i(k)$ such that 
	$T_i(k')/L_i \simeq  A_i(k')$  for any finite extension $k'/k$.
	This gives 
 surjections of Mackey functors $T_i/p \to A_i/p$, which induce a surjection
	\[
	 (T_1/p \otimes T_2/p)(k) \to  (A_1/p\otimes A_2/p)(k)\rightarrow 0
	\]
	(cf.\ \cite[Remark 4.2 (2)]{Yamazaki2005}).
	As the tori $T_1$ and $T_2$ split, we have 
	\[
	 (T_1/p \otimes T_2/p)(k)  \simeq (\G_m/p \otimes \G_m/p)(k)^{\oplus g_1g_2} \simeq (K_2(k)/p)^{\oplus g_1g_2} = 0, 
	\]
	where the last equality follows from  \cite[Chapter IX, Proposition 4.2]{Fesenko/Vostokov}.
	 This shows   that 
	$(A_1/p \otimes A_2/p)(k) =0$. 
		
\end{proof}
Recall that the Jacobian variety of a Mumford curve satisfies the assumption in the above proposition. Using the associativity of the Mackey products as in the proof of \autoref{product}, we obtain the following corollary.
\begin{cor}\label{Mumford curves}
	Let $k$ be a finite unramified extension of $\Q_p$. 
	Let $X = C_1\times \cdots \times C_r$ be a product of Mumford curves over $k$. 
	Then, the Albanese kernel 
	$F^2(X)$ is $p$-divisible.
\end{cor}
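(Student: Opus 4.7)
The plan is to follow the strategy of \autoref{product}, with \autoref{split tori} playing the role of \autoref{main1}. Each $C_i$ is a Mumford curve, so its Jacobian $J_i$ has split totally toric reduction over $k$; in particular, the connected component of the special fiber of the N\'eron model of $J_i$ is a split torus, and hence every pair $(J_i, J_j)$ satisfies the hypothesis of \autoref{split tori}.

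First I would invoke the Raskind-Spiess decomposition \eqref{F^2} to reduce the claim to showing that for every $2 \le \nu \le r$ and every choice of indices $1 \le i_1 < \cdots < i_\nu \le r$, the Somekawa $K$-group $K(k; J_{i_1}, \ldots, J_{i_\nu})$ is $p$-divisible. Since this $K$-group is a quotient of the Mackey product $(J_{i_1} \otimes \cdots \otimes J_{i_\nu})(k)$, it is enough to establish the vanishing
\[
(J_{i_1}/p \otimes \cdots \otimes J_{i_\nu}/p)(k) = 0.
\]

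For $\nu = 2$ this is precisely the content of \autoref{split tori}. For $\nu \ge 3$, I would proceed exactly as in \autoref{product}: the uniformizations $T_i(k')/L_i \simeq J_i(k')$ valid over every finite extension $k'/k$ produce surjections of Mackey functors $T_i/p \twoheadrightarrow J_i/p$, and right exactness together with the associativity of the Mackey tensor product reduce the vanishing to that of $(T_{i_1}/p \otimes \cdots \otimes T_{i_\nu}/p)(k)$. Splitting each $T_i \simeq \G_m^{\oplus g_i}$ identifies the latter, up to a direct sum, with the Mackey product $(\G_m^{\otimes \nu}/p)(k)$, which by Somekawa's identification (generalizing the $\nu=2$ case of Kahn from Example~\ref{MFexs}~(3)) coincides with $K_\nu^M(k)/p$. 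This group vanishes by \cite[Chapter IX, Proposition 4.2]{Fesenko/Vostokov} for $\nu = 2$ (using that $k$ is unramified over $\Q_p$ and $p$ is odd), and by the classical $p$-divisibility of the higher Milnor $K$-groups of a $p$-adic local field for $\nu \ge 3$.

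I do not anticipate any genuine obstacle here: the arithmetic content of the corollary is entirely concentrated in \autoref{split tori}, and the passage to higher $\nu$ is a formal consequence of the associativity of the Mackey product and the well-known structure of Milnor $K$-theory of $p$-adic local fields.
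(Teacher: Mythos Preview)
Your proposal is correct and follows the paper's approach: invoke \autoref{split tori} for the Jacobians of the Mumford curves and then extend to higher $\nu$ via the associativity of the Mackey product, exactly as in \autoref{product}. Your treatment of $\nu\ge 3$ is more explicit than the paper's terse pointer to \autoref{product}---you spell out the reduction through the torus uniformizations down to $K_\nu^M(k)/p$---but this is the same argument unpacked rather than a different route.
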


\smallskip
\section{Local-to-global results}\label{ltgsection} 
In this last section we focus on the local-to-global  \autoref{locatoglobalconj}, which constitutes one of the main motivations of this article. 
\begin{notn} For a $\Z$-module $M$ we will denote by $\widehat{M}:=\varprojlim_n M/n$ the completion of $M$. Let $F$ be a number field, that is, a finite extension of $\Q$. We will denote by $\Omega$, $\Omega_f$ and $\Omega_\infty$ the set of all places, all finite places and all infinite places of $F$ respectively. 
\end{notn}
\subsection{The Brauer group} Let $X$ be a smooth projective and geometrically connected variety over  $F$. 
The Brauer group  $\Br(X) = H^2_{\text{\'et}}(X,\mathbb{G}_m)$ of $X$ has a filtration,
\[\Br(X)\supset\Br_1(X)\supset\Br_0(X),\] induced by the Hochschild-Serre spectral sequence 
\[H^i(F,H^j_{\text{\'et}}(X_{\overline{F}},\G_m))\Rightarrow H^{i+j}_{\text{\'et}}(X,\G_m).\] The two subgroups are defined as follows; $ \Br_0(X):=\img(\Br(F)\rightarrow\Br(X))$, and  $\Br_1(X):=\ker(\Br(X)\rightarrow\Br(X_{\overline{F}}))$. The latter is usually referred in the literature as the \textit{algebraic Brauer group}, while the quotient $\Br(X)/\Br_1(X)$ is called the \textit{transcendental Brauer group}.  When $X$ is an abelian variety, a $K3$ surface (\cite[Theorem 1.1]{Skorobogatov/Zharin2008}), or a product of curves (\cite[Theorem B]{Skorobogatov/Zharin2014}) over a number field, the transcendental Brauer group is finite.

\subsection{The Brauer-Manin pairing} 
 For each place $v$ of $F$, put $X_v = X_{F_v}$.
Suppose $v$ is a finite place of $F$. There is a pairing, 
\[\langle\cdot,\cdot\rangle_v:CH_0(X_v)\times\Br(X_v)\rightarrow\Br(F_v)\simeq\Q/\Z,\] 
known as the Brauer-Manin pairing, which is defined using evaluation at closed points, and the invariant map of local class field theory, $\inv_v:\Br(F_v)\xrightarrow{\simeq}\Q/\Z$.  
Precisely, 
for each closed point $P\in X_v$ and $\alpha \in \Br(X_v)$, 
the pull-back of $\alpha$ along the closed point $P \to X_v$ 
is denoted by $\alpha(P) \in \Br(F_v(P))$, where $F_v(P)$ is the residue field of $P$. 
The paring above is defined by $\langle P,\alpha\rangle_v := \mathrm{Cor}_{F_v(P)/F_v}(\alpha(P))$.
In a similar manner, one can define a Brauer-Manin pairing for every real place $v$ of $F$, 
\[\langle\cdot,\cdot\rangle_v:CH_0(X_v)\times\Br(X_v)\rightarrow\Br(F_v)\simeq\Z/2\Z\hookrightarrow\Q/\Z.\] Note that in this case 
 the subgroup $\pi_{\overline{F}_v/F_v\star}(CH_0(X_v\otimes_{F_v}\overline{F}_v))$ of $CH_0(X_v)$ is contained in the left kernel of this pairing, where $X_{\overline{F}_v}\xrightarrow{\pi_{\overline{F}_v/F_v}}X$ is the projection. 

\begin{defnot}\label{adelicnotn} The \textit{adelic Chow group} of $X$ is defined as 
\[CH_{0,\mathbf{A}}(X)=\prod_{v\in\Omega}\overline{CH_0}(X_v),\] where for every finite place $v$ of $F$, we have an equality $\overline{CH_0}(X_v)=CH_0(X_v)$, while for every infinite place $v$, $\displaystyle\overline{CH_0}(X_v):=\frac{CH_0(X_v)}{\pi_{\overline{F}_v/F_v\star}(CH_0(X_v\otimes_{F_v}\overline{F}_v))}$.
 In a similar way, we define the adelic group $F^1_{\mathbf{A}}(X):=\prod_{v\in \Omega}\overline{F^1}(X_v)$ of zero-cycles of degree zero, and the adelic Albanese kernel, $F^2_{\mathbf{A}}(X):=\prod_{v\in \Omega}\overline{F^2}(X_v)$ (cf.\ Section~\ref{Kzero} for the definition of the filtration). 
\end{defnot}
Notice that for every infinite complex place $v$ we have an equality, $\overline{CH_0}(X_v)=0$. 
The local pairings induce a global pairing,
\[\langle\cdot,\cdot\rangle:CH_{0,\mathbf{A}}(X)\times\Br(X)\rightarrow\Q/\Z,\] defined by $\langle(z_v)_v,\alpha\rangle=\sum_v\langle z_v,\iota^\star(\alpha)\rangle_v$, where $\iota^\star$ is the pullback of $\iota: X_v\to X$.
We note that, if $v$ is a real place, then \cite[Th\'{e}or\`{e}me 1.3]{Colliot-Thelene1993} gives us that the group $\overline{F^1}(X_v)$ is isomorphic to a finite number of copies of $\Z/2\Z$. 

The short exact sequence of global class field theory, \[0\rightarrow\Br(F)\rightarrow\bigoplus_{v\in\Omega}\Br(F_v)\xrightarrow{\sum\inv_v}\Q/\Z\rightarrow 0,\] implies that the group $CH_0(X)$ lies in the left kernel of $\langle\cdot,\cdot\rangle$, thus giving rise to a complex, 
\begin{equation}\label{complex}CH_0(X)\stackrel{\Delta}{\longrightarrow}
CH_{0,\mathbf{A}}(X)\rightarrow\Hom(\Br(X),\Q/\Z).
\end{equation}

\subsection*{Behavior with respect to filtrations}

Under the Brauer-Manin pairing, the filtration $CH_0(X)\supset F^1(X)\supset F^2(X)\supset 0$ is compatible with the filtration $\Br(X)\supset\Br_1(X)\supset\Br_0(X)$ of the Brauer group. Namely, 
restricting the map $CH_{0,\mathbf{A}}(X)\to\Hom(\Br(X),\Q/\Z)$ to $F^1_{\mathbf{A}}(X)$, it factors through \[F^1_{\mathbf{A}}(X)\rightarrow\Hom(\Br(X)/\Br_0(X),\Q/\Z)\hookrightarrow\Hom(\Br(X),\Q/\Z).\] Similarly, restricting the latter to $F^2_{\mathbf{A}}(X)$, it factors through, 
\[F^2_{\mathbf{A}}(X)\rightarrow\Hom\left(\frac{\Br(X)}{\Br_1(X)},\Q/\Z\right)\hookrightarrow\Hom(\Br(X),\Q/\Z).\] 
This compatibility follows by considering the local pairings, $CH_0(X_v)\rightarrow\Hom(\Br(X_v),\Q/\Z)$ and using local Tate duality. For a proof of this compatibility we refer to \cite[Proof of Proposition 3.1]{Yamazaki2005} and \cite[Proof of Theorem 6.9]{Gazaki2015}.

\subsection{The conjecture} We are interested in the following conjecture. 
\begin{conj}[{\cite[Section 4]{Colliot-Thelene/Sansuc1981}, \cite[Section 7]{Kato/Saito1986}, \cite[Conjecture~1.5 (c)]{Colliot-Thelene1993} and \cite[Conjecture\ ($E_0$)]{Wittenberg2012}}]\label{bigconj} 
Let $X$ be a smooth projective geometrically connected variety over a  number field $F$. 
 The following complex is exact, 
\begin{equation}\label{complex2}
\widehat{F^1(X)}\xrightarrow{\Delta}
\widehat{F^1_{\mathbf{A}}(X)}\rightarrow\Hom(\Br(X)/\Br_0(X),\Q/\Z),\end{equation}  where $F^1_{\mathbf{A}}(X)$ is the group defined in \ref{adelicnotn}. 
\end{conj}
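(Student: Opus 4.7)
The plan is to tackle Conjecture~\ref{bigconj} via a decomposition of $CH_0(X)$ into arithmetic pieces that are each accessible by different techniques. I restrict attention to the case $X = C_1 \times C_2$ of a product of two smooth projective curves with $F$-rational points, where the Raskind-Spiess structure theorem (\autoref{zerocycles}) yields a canonical splitting
\[CH_0(X) \simeq \Z \oplus J_1(F) \oplus J_2(F) \oplus F^2(X),\]
and an analogous splitting holds adelically. Under the filtration $\Br(X) \supset \Br_1(X) \supset \Br_0(X)$ recalled after \ref{adelicnotn}, the complex \eqref{complex2} decomposes compatibly into three subcomplexes corresponding respectively to the degree, the Albanese factor, and the Albanese kernel.

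The first two subcomplexes I would handle by classical global methods. The degree piece is trivial after completion: $\widehat{\Z} \hookrightarrow \widehat{\prod_v \Z}$. For the Albanese piece, assuming finiteness of $\Sha(J_1 \times J_2)$, exactness of
\[\widehat{(J_1 \oplus J_2)(F)} \to \widehat{\prod_v (J_1 \oplus J_2)(F_v)} \to \Hom(\Br_1(X)/\Br_0(X), \Q/\Z)\]
follows from the Cassels-Tate dual exact sequence, once one identifies $\Br_1(X)/\Br_0(X)$ with $H^1(F, \Pic^0_{X_{\overline{F}}})$ and dualizes against the Selmer group.

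The essential remaining content is exactness of
\[\widehat{F^2(X)} \xrightarrow{\Delta} \widehat{F^2_{\mathbf{A}}(X)} \to \Hom(\Br(X)/\Br_1(X), \Q/\Z).\]
The transcendental quotient $\Br(X)/\Br_1(X)$ is finite by Skorobogatov-Zarhin, and Beilinson-Bloch predicts that $F^2(X)$ is finite, so $\widehat{F^2(X)} \simeq F^2(X)$. The conjecture then reduces to proving that $\widehat{F^2_{\mathbf{A}}(X)}$ is finite and matches the transcendental Brauer obstruction. Here the local $p$-divisibility results of the paper enter decisively: by \autoref{maintheointro} (together with Corollaries~\ref{product}, \ref{extensions}, \ref{Mumford curves}), whenever $F_v$ is unramified over $\Q_p$ and both Jacobians fall into the allowed reduction types, the local factor $\varprojlim_n F^2(X_v)/p^n$ vanishes. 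Combined with the prime-to-$p$ divisibility of \cite{Saito/Sato2010}, this forces all but finitely many local factors of $\widehat{F^2_{\mathbf{A}}(X)}$ to be zero.

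The main obstacle is the set of exceptional places. The finitely many places of bad reduction and of ramification would have to be absorbed by constructing explicit global zero cycles pairing appropriately with $\Br(X)/\Br_1(X)$, in the spirit of Salberger's arguments for rational varieties. Much more seriously, the set of rational primes $p$ at which both elliptic factors acquire good supersingular reduction can be infinite (e.g.\ for CM products), and at such primes our method gives no $p$-adic control whatsoever, so the corresponding local component of $\widehat{F^2_{\mathbf{A}}(X)}$ may be genuinely non-trivial. Breaking this obstruction would require either an affirmative answer to \autoref{Lsymbols} (which would let one extend Step 1 of the proof of \autoref{maintheointro} to the supersingular-supersingular case via \autoref{main3}), or a genuinely global construction of zero cycles to cancel the bad-place contributions, neither of which is accessible by the techniques developed here.
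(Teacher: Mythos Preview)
The statement you are addressing is a \emph{conjecture}, not a theorem, and the paper does not prove it. There is no ``paper's own proof'' to compare against. What the paper does is (i) state the conjecture, (ii) in \autoref{complexreduction} reduce it (for $X=C_1\times C_2$ with rational point and under a Tate--Shafarevich hypothesis) to exactness of the $F^2$-complex \eqref{complex1}, and (iii) in \autoref{localtoglobal} verify that the adelic $F^2$-term vanishes after inverting a suitable infinite set $T_S$ of primes. Your proposal accurately reproduces exactly this structure: the Raskind--Spiess splitting, the Cassels--Tate handling of the Albanese piece, and the local $p$-divisibility input for the $F^2$-piece.

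Where your proposal goes beyond the paper is in honestly naming the obstructions, and here you should be clear that these are not minor technicalities but genuine gaps that leave the conjecture open. The supersingular--supersingular primes form an infinite set in general, so you do not obtain finiteness of $\widehat{F^2_{\mathbf{A}}(X)}$ even granting everything the paper proves; you only get vanishing of the $T_S$-completion, which is strictly weaker. Your suggestion that the bad places ``would have to be absorbed by constructing explicit global zero cycles'' is a hope, not an argument, and the paper makes no such construction. In short: your write-up is a correct summary of the paper's partial evidence together with a candid list of what remains, but it is not a proof of \autoref{bigconj}, and neither is anything in the paper.
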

When $E$ is an elliptic curve, it is a theorem of Cassels (\cite{Cassels1964}) that \autoref{bigconj} is true, if the Tate-Shafarevich group of $E$ contains no nonzero divisible element; in particular, it is true if it is finite. This result has been generalized by Colliot-Th\'{e}l\`{e}ne \cite[paragraphe 3]{CT1997} to all curves, assuming that the Tate-Shafarevich group of their Jacobian contains no nonzero divisible element.  We next consider what happens for a product $X=C_1\times C_2$ of two curves over $F$. 
\begin{prop}\label{complexreduction} Let $X=C_1\times C_2$ be a product of smooth projective curves over a number field $F$. Let $\Alb_X=J_1\times J_2$ be the Albanese variety of $X$, where $J_1, J_2$ are the Jacobian varieties of $C_1, C_2$ respectively. Assume that $X$ contains a $k$-rational point and that the Tate-Shafarevich group of $\Alb_X$ contains no nonzero divisible element. Then the exactness of \eqref{complex2} can be reduced to the exactness of the following complex, 
\begin{equation}\label{complex1}\widehat{F^2(X)}\stackrel{\Delta}{\longrightarrow}
\widehat{F^2_{\mathbf{A}}(X)}\rightarrow\Hom\left(\frac{\Br(X)}{\Br_1(X)},\Q/\Z\right).\end{equation}
\end{prop}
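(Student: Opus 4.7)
The plan is to use \autoref{zerocycles} to split the complex \eqref{complex2} into a direct sum of two sub-complexes: one governed by the Albanese variety $\Alb_X$, whose exactness will follow from the assumption on the Tate-Shafarevich group via Cassels' theorem, and one which is precisely the complex \eqref{complex1}. The bulk of the work is in establishing the splitting and identifying each piece with the correct quotient of the Brauer group.

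First, I would record the Raskind-Spiess decomposition. Since $X(F)\neq\emptyset$ forces $C_i(F)\neq\emptyset$ for $i=1,2$, \autoref{zerocycles} gives the canonical isomorphism $F^1(X)\simeq \Alb_X(F)\oplus F^2(X)$. The construction is functorial under base change to any $F_v$, so after taking the usual archimedean modifications (see \autoref{adelicnotn}) one obtains an adelic splitting $F^1_{\mathbf{A}}(X)\simeq \overline{\Alb_X}(\mathbf{A}_F)\oplus F^2_{\mathbf{A}}(X)$, where $\overline{\Alb_X}(\mathbf{A}_F)$ denotes the product of the local Albanese points with the analogous modifications. Applying $\varprojlim_n(-/n)$ and noting that $\Delta$ preserves each summand, I obtain
\[
\widehat{F^1(X)}\simeq \widehat{\Alb_X(F)}\oplus \widehat{F^2(X)},\qquad \widehat{F^1_{\mathbf{A}}(X)}\simeq \widehat{\overline{\Alb_X}(\mathbf{A}_F)}\oplus \widehat{F^2_{\mathbf{A}}(X)}.
\]

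Next I would verify that the global Brauer-Manin pairing is compatible with the filtration $\Br_0(X)\subset\Br_1(X)\subset\Br(X)$ in a way that matches this decomposition. The excerpt already records that $\widehat{F^2_{\mathbf{A}}(X)}$ pairs trivially with $\Br_1(X)$, so its pairing factors through $\Br(X)/\Br_1(X)$. Dually, using local Tate duality and the identification $\Br_1(X_v)/\Br_0(X_v)\simeq H^1(F_v,\Pic(X_v\otimes_{F_v}\overline{F}_v))$, together with the fact that the $\Alb_X$-summand of $\widehat{F^1_{\mathbf{A}}(X)}$ pairs via the duality $\Alb_X(F_v)\times H^1(F_v,\Alb_X^{\vee})\to\Q/\Z$, one checks that the $\widehat{\overline{\Alb_X}(\mathbf{A}_F)}$-summand pairs only through $\Br_1(X)/\Br_0(X)$. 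Consequently the complex \eqref{complex2} is the direct sum of \eqref{complex1} with the Albanese complex
\[
(\ast)\qquad \widehat{\Alb_X(F)}\xrightarrow{\Delta}\widehat{\overline{\Alb_X}(\mathbf{A}_F)}\longrightarrow \Hom\!\left(\frac{\Br_1(X)}{\Br_0(X)},\Q/\Z\right).
\]

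Finally I would invoke the classical theorem of Cassels (\cite{Cassels1964}), extended to higher-dimensional abelian varieties by Colliot-Th\'el\`ene (cf.\ \cite{CT1997}, see also \cite{Wittenberg2012}), which asserts that $(\ast)$ is exact whenever the Tate-Shafarevich group of $\Alb_X=J_1\times J_2$ contains no nonzero divisible element; this is precisely our hypothesis. Exactness of \eqref{complex2} then becomes equivalent to exactness of \eqref{complex1}, yielding the desired reduction. The main technical obstacle I anticipate lies in the second step: carefully identifying which piece of $\Br(X)$ each summand of $\widehat{F^1_{\mathbf{A}}(X)}$ pairs against, in particular accounting for the N\'eron-Severi contribution to $\Br_1(X)/\Br_0(X)$ (which comes from $H^1(F,\mathrm{NS}(X_{\overline{F}}))$ rather than from $\Alb_X$) and keeping track of the archimedean modifications in the adelic groups after $n$-completion. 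Both issues are expected to be mild, since the Raskind-Spiess splitting is explicit on the $CH_0$-side and the archimedean corrections are $2$-torsion.
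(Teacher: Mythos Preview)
Your proposal is correct and follows essentially the same approach as the paper: use the Raskind--Spiess decomposition to split \eqref{complex2} into the Albanese complex and \eqref{complex1}, then dispose of the Albanese complex via the Tate--Shafarevich hypothesis. The paper handles the N\'eron--Severi contribution you flag by passing through the Hochschild--Serre identification $H^1(F,\Pic(X_{\overline{F}}))\twoheadrightarrow\Br_1(X)/\Br_0(X)$ and then composing with the map to $\Hom(H^1(F,\Pic^0(X_{\overline{F}})),\Q/\Z)$ coming from $0\to\Pic^0\to\Pic\to\mathrm{NS}\to 0$; since $\Alb_X=J_1\times J_2$ is self-dual, this reduces the Albanese complex to the one with target $\Hom(H^1(F,\Alb_X),\Q/\Z)$, whose exactness the paper cites from \cite[II.~5.6(b)]{Milne2006} rather than Cassels/Colliot-Th\'el\`ene.
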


\begin{proof} It follows by \cite[Corollary 2.4.1]{Raskind/Spiess2000} that we have a direct sum  decomposition,
\[F^1(X)\simeq J_1(F)\oplus J_2(F)\oplus F^2(X),\] and the same holds over $F_v$, for every place $v$ of $F$. This decomposition passes to the completions. Thus, proving exactness of \eqref{complex2} amounts to proving exactness of \eqref{complex1} and exactness of the following complex,
\begin{equation}\label{complex3}
\widehat{\Alb_X(F)}\xrightarrow{\Delta} \prod_{v\in\Omega}\widehat{\Alb_{X_v}}(F_v)\rightarrow \Hom(\Br_1(X)/\Br_0(X),\Q/\Z).
\end{equation} It follows directly by the Hochshchild-Serre spectral sequence, 
\[E_2^{i,j}=H^i(F, H^j(X_{\overline{F}},\G_m))\Rightarrow H^{i+j}(X,\G_m)\] that we have a surjection $H^1(F,\Pic(X_{\overline{F}}))\rightarrow \Br_1(X)/\Br_0(X)\rightarrow 0.$ 
Namely, $H^1(F,\Pic(X_{\overline{F}}))=E_2^{1,1}$, while the quotient $\Br_1(X)/\Br_0(X)$ is precisely the graded piece $gr_1(E^2)=gr_1(\Br(X))=E_{\infty}^{1,1}$. Note that the differential $d_2^{1,1}:E_2^{1,1}\rightarrow E_2^{3,0}$ vanishes, since $E_2^{3,0}=H^3(F,\overline{F}^\times)=0$. This is a byproduct of global class field theory (cf.\ \cite[Remark 6.7.10]{Poonen2017}). 
Thus, we get a surjection $E_2^{1,1}\rightarrow E_\infty^{1,1}\rightarrow 0$. This induces an injection on the dual groups, $0\rightarrow\Hom(\Br_1(X)/\Br_0(X),\Q/\Z)\rightarrow\Hom(H^1(F,\Pic(X_{\overline{F}})),\Q/\Z)$. Moreover, there is a map $\Hom(H^1(F,\Pic(X_{\overline{F}})),\Q/\Z)\rightarrow\Hom(H^1(F,\Pic^0(X_{\overline{F}})),\Q/\Z)$ induced by the short exact sequence of $G_F$-modules,
\[0\rightarrow\Pic^0(X_{\overline{F}})\rightarrow\Pic(X_{\overline{F}})\rightarrow NS(X_{\overline{F}})\rightarrow 0.\]
The variety $\Pic^0(X)$ is the dual abelian variety to $\Alb_X$. Since the latter is equal to the product $J_1\times J_2$, it is self dual. We conclude that the exactness of \eqref{complex3} follows by the exactness of the following complex
\[\widehat{\Alb_X(F)}\xrightarrow{\Delta} \prod_{v\in\Omega}\widehat{\Alb_{X_v}(F_v)}\xrightarrow{\beta} \Hom(H^1(F, \Alb_X),\Q/\Z).\] 
Here the map $\beta:\prod_{v\in\Omega}\widehat{\Alb_{X_v}(F_v)}\rightarrow \Hom(H^1(F, \Alb_X),\Q/\Z)$ is obtained by all the local isomorphisms $\Alb_{X_v}(F_v)\simeq \Hom(H^1(F_v,\Alb_{X_v}),\Q/\Z)$ induced by local Tate duality (cf.\ \cite[I. Corollary 3.4]{Milne2006}) and composing it with the map \[\prod_{v\in\Omega}\Hom(H^1(F_v,\Alb_{X_v}),\Q/\Z)\xrightarrow{\sum_v} \Hom(H^1(F,\Alb_{X}),\Q/\Z).\] The fact that the map $\sum_v$ is well-defined follows by \cite[I. Lemma 6.3]{Milne2006}. 
This complex is known to be exact under the assumption on the Tate-Shafarevich group of $\Alb_X$ (cf.\ \cite[II. 5.6(b)]{Milne2006}).

\end{proof}


\subsection{Elliptic curves with potentially good reduction}\label{CM} 
In this section we consider a product $X=E_1\times E_2$ of two elliptic curves over  $F$.  We assume that for $i=1,2$ the elliptic curve $E_i$ has potentially good reduction at all finite places of $F$. 
It is known that 
an elliptic curve over $F$ has potentially good reduction 
if and only if  its $j$-invariant is integral 
(\cite[Chapter~VII, Proposition 5.5]{Silverman2009}). An important class of elliptic curves with this property are elliptic curves with  
complex multiplication 
(\cite[p.\ 225]{Deuring1941} and \cite[Theorem 7]{Serre/Tate1968}, see also \cite[Chapter~VII, Exercise 7.10]{Silverman2009}). 

Suppose that $v_1,\ldots, v_r$ are all the places of bad reduction of $X$. 
Then, there exists some finite extension $L_{v_i}$ of $F_{v_i}$ such that $X_{L_{v_i}}$ has good reduction. We set $n_i := [L_{v_i}:F_{v_i}]$.


 \begin{lem}\label{additivered} 
Let $i\in\{1,\ldots, r\}$. Let $p$ be a prime number such that $v_i \nmid p$ and $p$ is coprime to $n_i$. 
 Then the Albanese kernel  $F^2(X_{v_i})$ is $p$-divisible.
 \end{lem}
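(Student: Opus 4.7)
The plan is to reduce from $F_{v_i}$ to $L_{v_i}$ via a standard trace argument and then invoke the classical result of Saito-Sato for the good reduction case, together with the Raskind-Spiess decomposition.

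First I would consider the finite flat base-change morphism $f: X_{L_{v_i}} \to X_{v_i}$, which has degree $n_i$. This morphism induces pullback $f^{\star}$ and pushforward $f_{\star}$ on Chow groups of zero-cycles, both preserving the filtration $CH_0 \supset F^1 \supset F^2$, and satisfies $f_{\star} \circ f^{\star} = n_i \cdot \mathrm{id}$. Since $\gcd(n_i, p) = 1$, multiplication by $n_i$ is an isomorphism on $F^2/p$, and hence $f^{\star}$ induces an injection $F^2(X_{v_i})/p \hookrightarrow F^2(X_{L_{v_i}})/p$. It therefore suffices to prove that $F^2(X_{L_{v_i}})$ is $p$-divisible.

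Over $L_{v_i}$, the product $X = E_1 \times E_2$ has good reduction by construction, and the residue characteristic $\ell$ of $L_{v_i}$ is different from $p$ (because $v_i \nmid p$). As already recalled in the introduction, the combination of \cite[Theorem 0.3, Corollary 0.10]{Saito/Sato2010} and \cite[Theorem 1]{Kato/Saito1983} then yields that $F^1(X_{L_{v_i}})$ is $p$-divisible. Since each $E_i$ has an $L_{v_i}$-rational point (being an abelian variety), the Raskind-Spiess decomposition from \autoref{zerocycles} gives an isomorphism
\[
	F^1(X_{L_{v_i}}) \simeq E_1(L_{v_i}) \oplus E_2(L_{v_i}) \oplus F^2(X_{L_{v_i}}),
\]
so that $F^2(X_{L_{v_i}})$, as a direct summand of a $p$-divisible group, is itself $p$-divisible. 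Combining this with the first step gives $F^2(X_{v_i})/p = 0$, as required.

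There is no serious obstacle here: the argument is just a descent from $L_{v_i}$ to $F_{v_i}$ together with an application of the classical good-reduction result. The only mild point worth noting is that the cited Saito-Sato-Kato-Saito result is phrased for $F^1$ rather than $F^2$, but the needed upgrade is supplied by the Raskind-Spiess decomposition, which is available because each elliptic curve has a rational point. Note in particular that this lemma makes no use of the assumption that $p$ be odd, since the restriction to residue characteristic $\ell \neq p$ circumvents any of the delicate formal group computations of Section~3.
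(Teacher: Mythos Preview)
Your argument is correct and follows essentially the same route as the paper: both use the push-forward/pull-back relation $f_{\star}\circ f^{\star}=n_i$ to reduce to the extension $L_{v_i}$ of good reduction, and then invoke \cite[Corollary 0.10]{Saito/Sato2010} for the residue characteristic $\neq p$ case. The only cosmetic difference is that the paper cites that corollary directly for $F^2(X_{L_{v_i}})$, whereas you take a short detour through $F^1$ and the Raskind--Spiess decomposition; this detour is valid but unnecessary.
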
 
 
 \begin{proof}
Consider the projection $X_{L_{v_i}}\xrightarrow{\pi_{L_{v_i}/F_{v_i}}} X_{v_i}$, and let $CH_0(X_{L_{v_i}})\xrightarrow{\pi_{L_{v_i}/F_{v_i}\star}} CH_0(X_{v_i})$, and $CH_0(X_{v_i})\xrightarrow{\pi_{L_{v_i}/F_{v_i}}^{\star}} CH_0(X_{L_{v_i}})$ be the induced push-forward and pull back maps respectively. Since $X_{L_{v_i}}$ has good reduction and $v_i \nmid p$, it follows by \cite[Corollary 0.10]{Saito/Sato2010} that the Albanese kernel $F^2(X_{L_{v_i}})$ is $p$-divisible. Moreover, the endomorphism \[\frac{F^2(X_{v_i})}{p}\stackrel{n_i}{\longrightarrow} \frac{F^2(X_{v_i})}{p}\] can be factored as $n_i=\pi_{L_{v_i}/F_{v_i}\star}\circ\pi_{L_{v_i}/F_{v_i}}^{\star}$. Since we assumed that $p$ does not divide $n_i$, 
 this forces the multiplication by $n_i$ to be injective modulo $p$. At the same time the map $\pi_{L_{v_i}/F_{v_i}\star}$ is the zero map. We conclude that the group $F^2(X_{v_i})/p$ vanishes. 
 
 \end{proof}

\begin{defn}
 Let $S$ be the set of rational primes consisting of: 
\begin{itemize}
\item all the primes $p$ such that $v_i\mid p$ for some $1\le i\le r$,
\item all the prime divisors of $\prod_{i=1}^r n_i$,
\item all the primes $p$ such that both $E_1, E_2$ have good supersingular reduction at $v$ for some place $v\in \Omega_f$ that lies above $p$, 
\item all the ramified primes in the extension $F/\Q$, and
\item $p = 2$.
\end{itemize} 
Moreover, let $T_S$ be the set of all integers whose prime divisors do not belong to $S$, namely  
\[T_S=\{n\geq 1: p \nmid n, \text{ for all }p\in S\}.\]  
\end{defn}

 The following corollary follows directly from \autoref{main2}.
\begin{cor}\label{localtoglobal} Let $X=E_1\times E_2$ be the product of two elliptic curves over $F$. Assume that for $i=1,2$ the elliptic curve $E_i$ has potentially good reduction at all finite places of $F$. 
Then, we have 
\[
\prod_{v\in\Omega}\varprojlim_{n\in T_S}\frac{\overline{F^2}(X_v)}{n}=0.
\] 
In particular, the following complex is exact,
\[\varprojlim_{n\in T_S}\frac{F^2(X)}{n}\xrightarrow{\Delta}\prod_{v\in\Omega}\varprojlim_{n\in T_S}\frac{\overline{F^2}(X_v)}{n}\xrightarrow{\varepsilon}\Hom\left(\frac{\Br(X)}{\Br_1(X)},\Q/\Z\right).\]
\end{cor}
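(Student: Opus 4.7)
The second assertion is a free consequence of the first: if $\prod_{v}\varprojlim_{n\in T_S}\overline{F^2}(X_v)/n=0$, then the middle term of the displayed complex is zero and the complex is trivially exact. So the whole task reduces to proving the vanishing, which I would do place by place, showing $\overline{F^2}(X_v)/n=0$ for every $n\in T_S$.

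First I would dispose of the archimedean places. For complex $v$ we have $\overline{CH_0}(X_v)=0$ by definition, hence $\overline{F^2}(X_v)=0$. For real $v$, the cited result of Colliot-Th\'el\`ene gives that $\overline{F^1}(X_v)$ is a finite direct sum of copies of $\Z/2\Z$, so the subgroup $\overline{F^2}(X_v)$ is also $2$-torsion; since $2\in S$, every $n\in T_S$ is odd and acts bijectively on a $2$-torsion group, forcing $\overline{F^2}(X_v)/n=0$.

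Next I would handle the finite places of bad reduction. If $v=v_i$, then by construction both the residue characteristic of $v_i$ and every prime divisor of $n_i$ lie in $S$. Thus for any prime $q\in T_S$ one has $q\nmid n_i$ and $v_i\nmid q$, so \autoref{additivered} gives that $F^2(X_v)$ is $q$-divisible. Since every $n\in T_S$ factors into such primes, $F^2(X_v)/n=0$.

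Finally, for a finite place $v$ of good reduction with residue characteristic $p$, I would split into two subcases. If $p\notin S$, then (by the definition of $S$) $p$ is odd, $p$ is unramified in $F/\Q$ so $F_v/\Q_p$ is unramified, and at least one of $E_1,E_2$ has good ordinary reduction at $v$; \autoref{main1} then yields $p$-divisibility of $F^2(X_v)\simeq K(F_v;E_1,E_2)$. For the remaining primes $q\in T_S$ with $q\ne p$ (these are all primes dividing $n\in T_S$ in the subcase $p\in S$, and all primes $q\ne p$ in the subcase $p\notin S$), one invokes the Saito--Sato type $\ell$-divisibility for good reduction together with the Raskind--Spiess decomposition $CH_0(X_v)\simeq \Z\oplus E_1(F_v)\oplus E_2(F_v)\oplus F^2(X_v)$ and the $\ell$-divisibility of $E_i(F_v)$ for $\ell\ne p$, which transfers $\ell$-divisibility from $F^1$ to $F^2$. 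Combining, $F^2(X_v)/n=0$ for every $n\in T_S$ in both subcases.

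The statement is really a packaging result, so the ``main obstacle'' is not any single deep step but the bookkeeping: one must verify that $S$ has been chosen large enough so that, at every place $v$ and for every prime $q\in T_S$, exactly one of the three divisibility inputs (\autoref{main1}, \autoref{additivered}, or Saito--Sato) is available. Once this case analysis is laid out, the vanishing is immediate and the exactness assertion follows formally.
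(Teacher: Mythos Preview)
Your proposal is correct and follows essentially the same approach as the paper: a place-by-place case analysis invoking Colliot-Th\'el\`ene's result at archimedean places, \autoref{additivered} at the bad places, Saito--Sato at good places away from the residue characteristic, and \autoref{main1} at good places above $p\notin S$. The only differences are cosmetic: the paper fixes a prime $p\notin S$ first and then runs through the places, whereas you fix the place first; and the paper cites \cite[Corollary~0.10]{Saito/Sato2010} directly for $p$-divisibility of $F^2(X_v)$ rather than going through the Raskind--Spiess splitting of $F^1$.
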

\begin{proof} 
Fix $v\in\Omega$ and $n\in T_S$. 
We will show that the group $\overline{F^2}(X_{v})$ is $n$-divisible. 
It is enough to show that $\overline{F^2}(X_v)$ is $p$-divisible,  
for each prime divisor $p$ of $n$. 
By definition of the set $T_S$, 
we have $p\not\in S$. 

\smallskip
\noindent
\textit{Case 1:} Suppose $v\in\Omega_\infty$ is a real place of $F$. 
By our assumption, $p$ is odd. Since $\overline{F^2}(X_v)=(\Z/2\Z)^s$, for some $s\geq 0$ (\cite[Th\'eor\`em 1.3]{Colliot-Thelene1993}), we conclude that $\overline{F^2}(X_v)$ is $p$-divisible. 

\smallskip
\noindent
\textit{Case 2:} Suppose $v \in\Omega_f$ and $v \nmid p$. 
If $X_v$ has good reduction, then $F^2(X_v)$ is $p$-divisible (\cite[Corollary 0.10]{Saito/Sato2010}). 
If $X_v$ has bad reduction, then 
it follows by \autoref{additivered} that  $F^2(X_v)$ is $p$-divisible. 

\smallskip
\noindent
\textit{Case 3:} Suppose $v\in\Omega_f$ and $v \mid p$. 
By $p\not\in S$, the surface $X_v$ has good reduction. 
Moreover, the extension $F_v/\Q_p$ is unramified, and at least one of the curves $E_{1v}$, $E_{2v}$ over $F_v$ has good ordinary reduction. It then follows by \autoref{main1} that the group $F^2(X_v)$ is $p$-divisible.  

\end{proof}

\begin{exmp} Consider the product $X=E_1\times E_2$ of the elliptic curves given by the Weierstrass equations $y^2=x^3+x$ and $y^2=x^3+1$ respectively. The curve $E_1\otimes_\Q \overline{\Q}$ has complex multiplication by $\Z[i]$, while $E_2\otimes_\Q \overline{\Q}$ has complex multiplication by $\Z[\omega]$, where $\omega$ is a primitive third of unity. Then \autoref{localtoglobal} in this case reads as follows: for every prime $p\geq 5$ such that $p\not\equiv 11$ mod $12$, the group $\varprojlim_n F^2_{\mathbf{A}}(X)/p^n=0$. 
\end{exmp}

\smallskip
%

\def\cprime{$'$}
\providecommand{\bysame}{\leavevmode\hbox to3em{\hrulefill}\thinspace}
\providecommand{\MR}{\relax\ifhmode\unskip\space\fi MR }
\providecommand{\MRhref}[2]{%
  \href{http://www.ams.org/mathscinet-getitem?mr=#1}{#2}
}
\providecommand{\href}[2]{#2}

\end{document}